\numberwithin{equation}{section} 
\theoremstyle{plain}
\newcounter{nonum}
\def\AA{\mathbb{A}}
\def\CC{\mathbb{C}}
\def\QQ{\mathbb{Q}}
\def\RR{\mathbb{R}}
\def\ZZ{\mathbb{Z}} 
\def\FF{\mathbb{F}} 
\def\GG{{\bf G}}
\def\TT{{\bf T}}
\def\MM{{\bf S}}
\def\B{{\rm B}}
\def\D{{\rm D}}
\def\E{{\rm E}}
\def\F{{F}}
\def\G{{G}}
\def\H{{H}}
\def\I{{I}}
\def\J{{\rm J}}
\def\K{{\rm K}}
\def\L{{\rm L}}
\def\N{{\rm N}}
\def\P{{P}}
\def\R{{\rm R}}
\def\SS{{\rm S}}
\def\T{{\rm T}}
\def\U{{\rm U}}
\def\V{{\rm V}}
\def\W{{\rm W}}
\def\X{{\rm X}}
\def\Y{\boldsymbol{\sf Y}}
\def\Z{{Z}}
\def\Aa{\mathscr{A}}
\def\Cc{\EuScript{C}}
\def\Ee{\EuScript{E}}
\def\Hh{\EuScript{H}}
\def\Mm{\boldsymbol{\sf V}}
\def\Oo{\EuScript{O}}
\def\Ga{\Gamma}
\def\La{\Lambda}
\def\Li{{\it\La}}
\def\a{\alpha} 
\def\b{\beta}
\def\d{\delta}
\def\e{\varepsilon}
\def\g{\gamma}
\def\h{\varphi}
\def\k{k}
\def\l{\lambda}
\def\m{\mathfrak{m}}
\def\n{\eta}
\def\p{\mathfrak{p}}
\def\s{\sigma}
\def\t{\theta}
\def\v{\upsilon}
\def\w{\varpi}
\def\>{\geqslant}
\def\<{\leqslant}
\def\Hom{{\rm Hom}}
\def\End{{\rm End}}
\def\Aut{{\rm Aut}}
\def\Mat{\boldsymbol{{\sf M}}}
\def\GL{{\rm GL}}
\def\SL{{\rm SL}}
\def\SO{{\rm SO}}
\def\Sp{{\rm Sp}}
\def\Gal{{\rm Gal}}
\def\Ker{{\rm Ker}}
\def\Ind{{\rm Ind}}
\def\qpb{{\overline{\mathbb{Q}}_p}}
\def\qlb{{\overline{\mathbb{Q}}_\ell}}
\def\zlb{{\overline{\mathbb{Z}}_\ell}}
\def\flb{{\overline{\mathbb{F}}_{\ell}}}
\def\ip{\boldsymbol{i}}
\def\r{{\textbf{\textsf{r}}}}
\def\j{{\textbf{\textsf{j}}}}
\def\kk{\boldsymbol{k}}
\def\Sp{{\rm Sp}}
\def\ii{\iota}
\def\Alg{{\rm Alg}}
\def\Std{{\rm Std}}
\def\BC{\boldsymbol{\sf t}}
\def\CB{\boldsymbol{\sf b}}
\def\rec{\boldsymbol{\sf a}}
\def\res{\boldsymbol{\sf res}}
\def\WD{{\rm WD}}
\def\GGH{\widehat{{\bf G}}}
\def\GGL{{}^L\GG}
\def\LLC{{\rm rec}}
\def\Frob{\Phi}
\def\fss{*}
\def\zz{{\bf Z}}
\long\def\MSC#1\EndMSC{\def\arg{#1}\ifx\arg\empty\relax\else
     {\par\narrower\noindent%
     2010 Mathematics Subject Classification: #1\par}\fi}
\long\def\KEY#1\EndKEY{\def\arg{#1}\ifx\arg\empty\relax\else
	{\par\narrower\noindent Keywords and Phrases: #1\par}\fi}
\title{Local transfer for quasi-split classical groups
and congruences mod $\ell$}
\author{Alberto M\'{\i}nguez}
\address{
University of Vienna, 
Fakult{\"a}t f{\"u}r Mathematik,
Oskar-Morgenstern-Platz 1,
1090 Wien}
\email{alberto.minguez@univie.ac.at}
\author{Vincent S\'echerre}
\address{
Laboratoire de Math\'emati\-ques de Versailles, 
UVSQ, 
CNRS, 
Universit\'e Paris-Saclay,
78035 Versailles, France,
Institut Universitaire de France}
\email{vincent.secherre@uvsq.fr}
\begin{abstract}
Let $G$ be the group of rational points of a quasi-split $p$-adic special 
orthogonal,~sym\-plectic or unitary group for some odd  prime number $p$.
Following Arthur and Mok, there are~an~in\-te\-ger $N\>1$,
a $p$-adic field $E$ and a local functorial transfer from isomorphism
classes of irreducible smooth complex representations of $G$ to
those of $\GL_N(E)$.
By fixing a prime number $\ell$ different from $p$ and an isomorphism between 
the field of complex numbers and an algebraic closure of the~field~of 
$\ell$-adic numbers, we obtain a transfer map between representations with 
$\ell$-adic coefficients.
Now~con\-si\-der a cuspidal irreducible $\ell$-adic representation $\pi$ of 
$G$: 
we can define its reduction mod $\ell$,
which is a~semi-simple smooth representation of $G$ of finite length,
with coefficients in
a field of~characteris\-tic~$\ell$.
Let $\pi'$ be a cuspidal irreducible $\ell$-adic representation of $G$
whose reduction mod~$\ell$ is isomor\-phic~to that of $\pi$.
We prove that the transfers~of $\pi$ and $\pi'$ have reductions
mod $\ell$~which may~not be isomorphic,
but which have isomorphic supercuspidal supports. 
When $G$ is not the~split special~or\-tho\-gonal group $\SO_2$,
we~further prove that the  reductions
mod $\ell$ of the transfers~of $\pi$ and $\pi'$ 
share~a unique common generic component. 
\end{abstract}
\begin{document} 

\maketitle

\MSC 22E50, 11F70
\EndMSC
\KEY 
Automorphic representation,
Classical group,
Congruences mod $\ell$,
Cuspidal representation,
Functorial transfer
\EndKEY

\thispagestyle{empty}

\section{Introduction}

\subsection{}
\label{fernandduluc}

Let $F$ be a $p$-adic field for some odd prime number $p$
and $G$ be the group of rational~points of~a~quasi-split
special orthogonal, unitary or symplectic group defined over $F$.
In the case~where $G$ is unitary,
let $E$ be the quadratic extension of $F$ with respect to which 
$G$ is defined~;
other\-wi\-se, 
let~$E$ be equal to $F$.
According to Arthur \cite{Arthur} for special orthogonal and 
symplectic groups, 
and to Mok \cite{Mok} for unitary groups,
there is a positive integer $N=N(G)$ and a 
map~from~isomor\-phism classes of irreducible (smooth)
complex representations
of $G$
to those of the general~li\-near group $\GL_N(E)$,
called the local \textit{transfer} or \textit{base change},
which we will denote by $\BC$. 

\subsection{}
\label{sommerive}
  
Let us fix a prime number $\ell$ different from $p$
and an isomorphism of fields $\ii$ between~$\CC$~and~an
algebraic closure $\qlb$ of the field of $\ell$-adic numbers.
Replacing formally $\CC$ by $\qlb$ thanks~to~$\ii$,~we 
get a local transfer 
between isomorphism classes of
irreducible smooth $\qlb$-re\-pre\-sen\-ta\-tions,
deno\-ted $\BC_{\ell}$. 
(We describe the dependency of $\BC_{\ell}$ in the choice of $\ii$
-- or equivalently the behavior of~$\BC$ with respect to automorphisms
of $\CC$:
see Paragraph \ref{appaAforNRrep} for unramified representations, 
and~Pa\-ra\-graph~\ref{appA} for
{discrete series}
representations, 
of $G$.)
  
We can now consider irreducible 
$\qlb$-representations which are \textit{integral}
-- that is, which carry a stable $\zlb$-lattice,
where $\zlb$ denotes the ring of integers of $\qlb$.
Given~such a representation $\pi$, 
one can define its reduction mod $\ell$:
this is the semi-simplification of the~re\-duc\-tion of any of its stable 
$\zlb$-lattices modulo the maximal ideal of $\zlb$.
This is a smooth~repre\-sentation of finite~length
with coefficients in $\flb$,
the residue field of $\zlb$,~de\-no\-ted $\r_\ell(\pi)$.
One then can ask whether the map $\BC_{\ell}$ preserves the fact of
being integral,
and how it behaves with respect to congruences mod $\ell$.

Similar questions have already been answered for other
local correspondences:
see \cite{Vigl,Datl,BHl}
for the local Langlands correspondence for $\GL_n$,
as well as \cite{Datjl,MSjl}
for~the local Jacquet-Lang\-lands correspondence between inner~forms of $\GL_n$,
for $n\>1$.
(See also Paragraph \ref{introCLBC} below~and Ap\-pen\-dix~\ref{appBC},
where we discuss the case of
the cyclic local base change for $\GL_n$.)
In this paper, we pro\-ve~the following theorem. 

\begin{theo}
\label{MAINTHEOINTRO}
Let $\pi_1$, $\pi_2$ be integral cuspidal irreducible $\qlb$-representations 
of $G$, and assu\-me that
\begin{equation}
\label{condpipi}
\r_\ell(\pi_1) \< \r_\ell(\pi_2)
\end{equation}
that is, $\r_\ell(\pi_1)$ is contained in $\r_\ell(\pi_2)$ as semi-simple 
$\flb$-representations of $G$. 
Then
\begin{enumerate}
\item 
The local transfer $\BC_{\ell}(\pi_i)$ is an integral $\qlb$-representation of 
$\GL_N(E)$ for each $i=1,2$.
\item
The irredu\-ci\-ble components of the semi-simple
$\flb$-representation 
$\r_\ell(\BC_{\ell}(\pi_1)) \oplus \r_\ell(\BC_{\ell}(\pi_2))$ 
all have the same supercuspidal support
(see below for a definition).
\item
Assume that $G$ is not isomorphic to the split special orthogonal 
group $\SO_2(F)\simeq F^\times$.~The semi-simple
$\flb$-representations $\r_\ell(\BC_{\ell}(\pi_1))$ and
$\r_\ell(\BC_{\ell}(\pi_2))$ have a unique generic irreducible~compo\-nent 
in common. 
\end{enumerate} 
\end{theo}

As in the case of complex coefficients, 
an irreducible representation of $\GL_N(E)$ on an $\flb$-vector space $V$
is said to be \textit{generic} if $V$ carries a non-zero $\flb$-linear form 
$\La$ such that $\La(\pi(u)v)=\t(u)v$
for all $v\in V$ and all upper triangular unipotent
matrices $u$ of $\GL_N(E)$,
where $\t$ is the $\flb$-character 
\begin{equation*}
u \mapsto \psi(u_{1,2}+\dots+u_{n-1,n})
\end{equation*}
and $\psi$ is a non-trivial $\flb$-character of $F$.

An irreducible $\flb$-representation of $\GL_N(E)$ is 
\textit{supercuspidal} if it does not occur as a subquo\-tient of any 
representation parabolically induced from a proper Levi subgroup.
A \textit{supercuspi\-dal support} of an irreducible
$\flb$-representation~$\pi$~of
$\GL_N(E)$ is a pair $(M,\rho)$ made of a Levi sub\-group of 
$\GL_N(E)$ and a supercuspidal representa\-tion $\rho$ of $M$ such that $\pi$
occurs as a subquotient of the normalized parabolic induction of $\rho$.
It is uniquely determined up to conjugacy
(\cite{Vigs} V.4, \cite{MSc} Th\'eo\-r\`e\-me 8.16).

Note that,
unless $G$ is the~split~spe\-cial~or\-tho\-gonal group 
$\SO_2(F)\simeq F^\times$,
the centre of $G$ is~com\-pact. 
When this is the case, 
any cuspidal irreducible $\qlb$-re\-pre\-sen\-tation of $G$ 
is~in\-te\-gral.~We will discuss the case of the split $\SO_2(F)$ 
in detail in Paragraph \ref{splitSO2}.

Also note that,
if $G$ is not isomorphic to $\SO_2(F)\simeq F^\times$,
then (3) implies (2),
since all~irredu\-cible components of the reduction mod $\ell$ of an integral
irreducible $\qlb$-representation of $\GL_N(E)$
have the sa\-me supercuspidal support \cite{Vigl} \S 1.3. 

Before discussing the other assumptions of Theorem \ref{MAINTHEOINTRO}
(in Paragraph \ref{discussthm11}),
let us explain~how we prove it.~The
general~strategy goes back to Khare \cite{Khare} and Vign\'eras 
\cite{Vigl} who study the~con\-gruence properties of the local
Langlands correspondence for $\GL_n(F)$ with $n\>1$. 

\subsection{}
\label{par13intro}

The first step is to pass from our given local situation to the following global 
situation (which is the purpose of Sections \ref{global1} to \ref{global3}).

First, 
$k$ is a totally real number field,
$l$ is either $k$ or a totally imaginary quadratic extension~of $k$
and $w$ is a finite place of $k$ above $p$, 
inert in $l$, 
such that $k_w=F$ and $l_w=E$
(see \textsection 2.4). 

Next, 
$\GG$ is a connected reductive group defined over $k$ such that 
\begin{enumerate}
\item
the group $\GG(F)$ naturally identifies with $G$,
\item 
the group $\GG(k_v)$ is compact for any real place $v$
and quasi-split for any finite place~$v$, 
\item 
the $k$-group $\GG$ is an inner form of a quasi-split special orthogonal,
unitary or symplectic group $\GG^*$.
\end{enumerate}
The existence of such a group $\GG$ is proved in Section \ref{global1}
(see Theorem \ref{GLOBALG}).

Finally, $\Pi_1$ and $\Pi_2$ are 
irreducible~auto\-morphic~re\-pre\-sentations of $\GG(\AA_k)$,
where $\AA_\k$ denotes the ring of ad\`eles of $k$, such that
\begin{enumerate}
\item 
$\Pi_{1,w}\otimes_{\CC}\qlb$ is isomorphic to $\pi_1$
and $\Pi_{2,w}\otimes_{\CC}\qlb$ is isomorphic to $\pi_2$,
\item
the representations
$\Pi_{1,v}$ and $\Pi_{2,v}$ are trivial for any real place $v$,
\item 
for a given 
finite place $u\neq w$ of $k$,
the representations
$\Pi_{1,u}$ and $\Pi_{2,u}$ are both isomorphic to some~cus\-pidal 
irreducible unitary representation $\rho$ of $\GG(k_u)$ which is 
compactly induced from a compact mod centre,
open subgroup of $\GG(k_u)$,
\item
there is a finite set $\SS$ of places of $k$, containing all real places, 
such that for all $v\notin\SS$~:
\begin{enumerate}
\item 
the group $\GG$ is unramified over $k_v$,
\item
the representations $\Pi_{1,v}\otimes_{\CC}\qlb$ and
$\Pi_{2,v}\otimes_{\CC}\qlb$ are un\-ra\-mified 
with respect to some hyperspecial
ma\-xi\-mal compact subgroup of $\GG(k_v)$,
\item
their Satake parameters (in the sense of Paragraph \ref{sub:conditions}) 
are integral and~con\-gruent mod the maximal ideal of $\zlb$, 
\end{enumerate}
\end{enumerate}
where all tensor products are taken with respect to $\ii$.
We construct such $\Pi_1$ and $\Pi_2$ in Sections \ref{global2} and \ref{global3}.

\subsection{}
\label{bidib4}

The next step
-- which is the purpose of Section \ref{transfer5} --
is to associate to $\Pi_1$ and $\Pi_2$ two
cuspidal irreducible automorphic representa\-tions 
$\widetilde{\Pi}_1$ and $\widetilde{\Pi}_2$ of $\GL_N(\AA_l)$ such that,
for any finite place $v$,
the local transfer~of~$\Pi_{i,v}$ is~isomor\-phic to $\widetilde{\Pi}_{i,v}$,
for $i=1,2$.
For this,
we use the results of Taïbi \cite{Taibi}
if $\GG^*$ is symplectic or~spe\-cial or\-thogonal, 
and Labesse \cite{Labesse} if $\GG^*$ is unitary.
Namely, let $\widetilde{\Pi}_i$ be
\begin{itemize}
\item 
the Arthur parameter associated 
with $\Pi_i$ if $\GG^*$ is symplectic or special othogonal (\cite{Taibi}), 
\item
the stable base change of $\Pi_i$ to $\GL_N(\AA_l)$
if $\GG^*$ is unitary (\cite{Labesse}).
\end{itemize}
In both cases,
$\widetilde{\Pi}_i$ is algebraic regular and \cite{Taibi, Labesse}
provide $\widetilde{\Pi}_i$ with certain 
local-global compati\-bi\-lities at all finite places. 
In order to ensure that these local-global compatibilities
are what~we want,~na\-mely,
that the~local transfer~of $\Pi_{i,v}$ is isomorphic to $\widetilde{\Pi}_{i,v}$
at all finite $v$,
and in prevision of the next step,
we need $\widetilde{\Pi}_i$ to be cuspidal.

In order to choose $\Pi_1$, $\Pi_2$ so that 
$\widetilde{\Pi}_1$, $\widetilde{\Pi}_2$ are cuspidal, 
we use the~au\-xi\-liary cuspidal~re\-pre\-sen\-tation $\rho$ of
Paragraph \ref{par13intro}.
More precisely,
we prove the following result
(see Lemma \ref{placeu}).

\begin{prop}
Given $k$, $w$ and $\GG$ as in Paragraph \ref{par13intro},
the finite place $u$ of $k$
and the~re\-pre\-sen\-tation $\rho$ of $\GG(k_u)$
can be chosen so that the local transfer of $\rho$ is cuspidal. 
\end{prop}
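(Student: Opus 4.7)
The goal is to produce, at some finite place $u\neq w$ of $k$, a cuspidal irreducible unitary representation $\rho$ of $\GG(k_u)$ that is (a) compactly induced from a compact-mod-centre open subgroup and (b) whose local transfer $\BC(\rho)$ is cuspidal as a representation of $\GL_N(l_u)$. Since the local transfer is described by the local Langlands correspondence for $\GL_N$, condition (b) is equivalent to requiring that the Langlands parameter $\varphi$ of the L-packet of $\rho$ satisfy that $\Std\circ\varphi$ is an irreducible $N$-dimensional representation of $W_{l_u}$, trivial on $\SL_2$.

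My plan is to take $\rho$ to be a regular depth-zero supercuspidal attached to a sufficiently regular character of an elliptic unramified maximal torus of $\GG$ at a well-chosen place $u$. First discard the finite set of places at which $\GG$ is ramified, together with $w$ itself. Among the remaining finite places, a Chebotarev argument applied to a suitable cyclotomic extension of $k$ produces infinitely many $u$ with the property that $\GG(k_u)$ contains an elliptic unramified maximal torus $\T$ corresponding to an unramified extension $M/k_u$ of appropriate degree (the precise degree depends on whether $\GG^*$ is symplectic, special orthogonal or unitary, and on the parity of $N$). To such a $\T$ and a regular character $\theta$ of $\T(k_u)$, the DeBacker--Reeder construction attaches a depth-zero supercuspidal representation $\rho=\rho(\T,\theta)$, compactly induced from a maximal parahoric subgroup of $\GG(k_u)$; this parahoric is compact modulo the centre because $\T$ is elliptic, yielding (a).

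For (b), one invokes the explicit description of the Langlands parameter of $\rho$, namely $\Std\circ\varphi_\rho\simeq\Ind_{W_M}^{W_{k_u}}\widetilde\theta$, where $\widetilde\theta$ is the character of $W_M$ attached to $\theta$ by local class field theory. Regularity of $\theta$ makes this induced character irreducible on $W_{k_u}$, and a strengthened regularity condition, easy to enforce as soon as the residue cardinality $q_u$ is large enough (which is again an infinite Chebotarev condition on $u$), ensures that it remains irreducible upon restriction to $W_{l_u}$, treating both the split and inert cases of $u$ in $l$ uniformly. Hence $\BC(\rho)$ is supercuspidal.

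The main obstacle is the uniform handling of the three families of classical groups together with the two possible behaviours of $l/k$ at $u$; in each case the strengthened regularity needed reduces to a non-triviality condition on $\theta$ cut out by a proper Zariski-closed subset of the character group of $\T(k_u)$, which is avoided whenever $q_u$ is sufficiently large.
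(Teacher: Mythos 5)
Your plan breaks down in the symplectic case, and the reason is not a technicality but a genuine obstruction that the paper is explicitly built to avoid. When $\GG^*=\Sp_{2n}$ the transfer lands in $\GL_{2n+1}$, so a cuspidal transfer requires an \emph{irreducible orthogonal} representation of $\W_{k_u}$ of odd dimension $2n+1>1$. By Bushnell--Henniart \cite{BHdyadic}, no such representation exists when the residue characteristic of $k_u$ is odd. (Concretely: an irreducible induced representation $\Ind_{\W_M}^{\W_{k_u}}\widetilde\theta$ from an unramified extension of odd degree $2n+1$ is self-dual only if $\widetilde\theta^2=1$, and then $\widetilde\theta$ is Frobenius-fixed, destroying irreducibility.) Your Chebotarev-type selection discards finitely many bad places but otherwise produces places $u$ of odd residue characteristic, and at each such $u$ there is simply no cuspidal $\rho$ of $\Sp_{2n}(k_u)$ with cuspidal transfer, regardless of how you choose $\theta$. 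Strengthening the regularity hypothesis cannot create a representation that does not exist.

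This is precisely why the paper splits into three cases and, in the symplectic one, insists on a place $u$ with $k_u\simeq\QQ_2$ (secured in advance by Lemma~\ref{arthurtotallyreal}(2)) and invokes Theorem~\ref{thm:guy} of Appendix~\ref{app2}, which produces a simple supercuspidal of $\Sp_{2n}(\QQ_2)$ whose parameter is the unique irreducible orthogonal representation of $\W_{\QQ_2}$ of dimension $2n+1$. For the orthogonal case the paper chooses $u$ where $\GG$ is split and quotes the Lust--Stevens level-zero construction (Appendix~\ref{app1}); a depth-zero/DeBacker--Reeder argument along your lines could plausibly be made to work there, but one must still check that the parameter has the correct type (symplectic for $\SO_{2n+1}$, orthogonal for $\SO_{2n}^\a$), which your write-up does not address. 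For the unitary case the paper's route is much simpler than yours: choose $u$ split in $l$, so that by Remark~\ref{remaBCUSPLIT} the transfer is $\rho\otimes\rho^\vee$ and cuspidality is automatic for any cuspidal $\rho$; there is no need to control the behaviour of an induced parameter after restriction to $\W_{l_u}$, nor to treat the inert case at all. In short, the unitary and orthogonal parts of your proposal could be repaired, but the symplectic part cannot as stated; you need to force $u\mid 2$ and a dyadic argument, which is exactly the content of the appendix you bypassed.
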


If $\GG^*$ is unitary,
it suffices to choose $u$ so that $\GG$ is split over $k_u$.
In the symplectic and special orthogonal cases,
this is the purpose of Appendices \ref{app1} and \ref{app2}.
(In particular,
the place $u$ has to divide $2$ in the symplectic case.) 

\subsection{}

We now have two algebraic regular,
cuspidal irreducible automorphic representations 
$\widetilde{\Pi}_1$ and $\widetilde{\Pi}_2$ of $\GL_N(\AA_l)$ such that,
for $i=1,2$ and all finite places $v$,
the transfer of $\Pi_{i,v}$~is~iso\-morphic~to $\widetilde{\Pi}_{i,v}$.
Besides, 
it follows from the properties of the transfer from $\GG(\AA_k)$
to $\GL_N(\AA_l)$ that the~con\-ju\-ga\-te of the contragredient of
$\widetilde{\Pi}_i$
by the generator $c$ of $\Gal(l/k)$ is isomorphic to $\widetilde{\Pi}_i$. 

From the properties of $\Pi_1$ and $\Pi_2$ at all places $v\notin\SS$,
and from the congruence properties of the unramified local transfer that 
we establish in Section \ref{ULT}, it follows that, for all $v\notin\SS$:
\begin{enumerate}
\item
the local components $\widetilde{\Pi}_{1,v}$ and $\widetilde{\Pi}_{2,v}$ are unramified,
\item
the Satake parameters of $\widetilde{\Pi}_{1,v}\otimes_{\CC}\qlb$ and
$\widetilde{\Pi}_{2,v}\otimes_{\CC}\qlb$
are integral and congruent mod the maximal ideal of $\zlb$.
\end{enumerate} 

We now apply the results of \cite{BLGHT},
which give us two continuous $\ell$-adic Galois representations
\begin{equation*}
\Sigma_i : \Gal(\overline{\QQ}/l) \to \GL_N(\qlb),
\quad
i=1,2,
\end{equation*}
such that,
for any finite place $v$ of $l$ not dividing $\ell$,
the ($\ell$-adic)
Weil-Deligne representation~asso\-ciated with
$\widetilde{\Pi}_{i,v}|\det|_v^{(1-N)/2}$ 
by the local Langlands correspondence is isomorphic
to~the~Frobe\-nius-semisimplification of the Weil-Deligne representation
associated with $\Sigma_{i,v}$,
the restriction~of $\Sigma_i$ to a decomposition
subgroup of $\Gal(\overline{\QQ}/l)$ at $v$. 
(Here $|\cdot|_v$ denotes the absolute value~of $l_v$~nor\-malized so that
the absolute value of any uniformizer of $l_v$
is the inverse of the cardinality of the residue field of $l_v$.)

Thanks to our local conditions at all $v\notin\SS$,
the representations $\Sigma_{1,v}$, $\Sigma_{2,v}$
are congruent mod~$\ell$.
A density argument then implies that $\Sigma_1$ and~$\Sigma_2$ 
are congruent mod $\ell$.
In particular, $\Sigma_{1,w}$, $\Sigma_{2,w}$
are congruent mod~$\ell$.

Associated with $\Sigma_{i,w}$,
there is a Frobenius-semisimple Weil-Deligne representation $(\rho_i,N_i)$.~We
show in Section \ref{GaloisWeil} that the fact that $\Sigma_{1,w}$ and $\Sigma_{2,w}$
are congruent mod~$\ell$ implies that the
smooth semi-simple representations
$\rho_1$ and $\rho_2$ are integral and congruent mod~$\ell$.
Since $\rho_i$ corresponds~to the cuspidal support of
$\widetilde{\Pi}_{i,w}|\det|_w^{(1-N)/2}\otimes_{\CC}\qlb$
(thanks to the local-global compatibility at~$w$~gi\-ven by \cite{BLGHT}),
it follows from the mod $\ell$ local Langlands correspondence of
Vign\'eras \cite{Vigl} that 
\begin{equation*}
\widetilde{\Pi}_{1,w}|\det|_w^{(1-N)/2}\otimes_{\CC}\qlb,
\quad
\widetilde{\Pi}_{2,w}|\det|_w^{(1-N)/2}\otimes_{\CC}\qlb
\end{equation*}
are integral and have the same \textit{mod $\ell$ supercuspidal support}
(which is the supercuspidal support of any irreducible component of the
reduction mod $\ell$): it follows that 
the supercuspidal~sup\-port of~the ge\-neric irreducible 
component of the reduction mod $\ell$ of
$\widetilde{\Pi}_{i,w}|\det|_w^{(1-N)/2}\otimes_{\CC}\qlb$,~de\-noted $\d_i$,
is independent of $i\in\{1,2\}$.
Since a generic irreducible $\flb$-representation
is uniquely~de\-termi\-ned by its supercuspidal~sup\-port,~we
deduce that $\d_1$, $\d_2$ are
isomorphic.
The Main~Theo\-rem \ref{MAINTHEOINTRO} now follows from the fact that
$\widetilde{\Pi}_{i,w}|\det|_w^{(1-N)/2}\otimes_{\CC}\qlb$
is isomorphic to $\BC_\ell(\pi_i
)$.~We~refer~to~Sec\-tions~\ref{sec8}~and \ref{argumentfinal} for more 
details.  

\subsection{}
\label{discussthm11}

Now let us discuss the assumptions of the main theorem. 

First,
the construction of $\Pi_1$ does not require $\pi_1$ to be cuspidal: 
it would be enough to assume that $\pi_1\otimes_{\qlb}\CC$ is a
discrete series representation of $G$
(for one, or equivalently any, choice of the
field isomorphism $\ii$: see Remark \ref{contrevie}).

However,
in order to construct the representation $\Pi_2$ satisfying our local 
conditions at all~pla\-ces
$v\notin\SS$ by the method of Khare--Vign\'eras, 
we need $\pi_2$ to be cuspidal --
even more precisely,~we
need $\pi_2$ to be~compactly~indu\-ced from some open, compact mod centre 
subgroup of $G$,
which is true of any cuspidal re\-pre\-sen\-ta\-tion of $G$,
thanks to the work of Stevens \cite{ShaunINVENT08}
since $p$~is~odd.~Con\-se\-quently, 
both the cuspidality of $\pi_2$ and \eqref{condpipi} imply that $\pi_1$
should be cuspidal,
as~the~pa\-ra\-bo\-lic restriction functors commute with reduction mod $\ell$.

For the same reason,
we want the auxiliary representation $\rho$ of $\GG(k_u)$
to be~compactly~in\-du\-ced from an open,
compact mod centre subgroup.

Moreover,
as has been explained in Paragraph \ref{bidib4},
we also need $\rho$ to have a cuspidal transfer to $\GL_N(\k_u)$.
This is why the symplectic group requires a special treatment
(see Appendix \ref{app2}),
since no cuspidal representation of a $p$-adic symplectic group
has a cuspidal transfer when $p$ is odd,
and the work of Stevens \cite{ShaunINVENT08} is not available when $p=2$.

On the other hand,
we show that part (3) of
Theorem \ref{MAINTHEOINTRO} does not hold in general
for~non-cuspidal~re\-pre\-sentations:~Re\-mark \ref{VarNR} gives an 
example of integral unramified irreducible 
$\qlb$-re\-presentations~$\pi_1$ and $\pi_2$ of $\SO_5(F)$ such that
$\r_\ell(\pi_1) = \r_\ell(\pi_2)$
but $\r_\ell(\BC_{\ell}(\pi_1))$ and $\r_\ell(\BC_{\ell}(\pi_2))$ have~no
irreducible~com\-ponent in common.

Finally,
our Assumption \eqref{condpipi}
is inspired from Vign\'eras \cite{Vigl} 3.5.
{It is tempting to conjectu\-rate~that the conclusion of
Theorem \ref{MAINTHEOINTRO} still holds when \eqref{condpipi}
is replaced by the weaker condition
``$\r_\ell(\pi_1)$ \textit{and} $\r_\ell(\pi_2)$
\textit{have a component in common}'',
but we have no evidence that such a conjecture~should be true.}

\subsection{}
\label{introCLBC}

In Appendix \ref{appBC},
we discuss the case of the local base change 
from $\GL_n(F)$ to $\GL_n(K)$ for~a~cy\-clic extension~$K$ of $F$,
denoted $\CB_{K/F}$.

As in Paragraph \ref{sommerive},
choosing a field isomorphism $\ii:\CC\to\qlb$ gives an
$\ell$-adic~local base~chan\-ge map $\CB_{K/F,\ell}$.
By using the properties of the local Langlands correspondence for $\GL_n$
with respect to conjugacy by an automorphism of $\CC$,
we prove that $\CB_{K/F,\ell}$ 
does not depend on the~choice~of~$\ii$
(see Proposition \ref{garabedian}).
We also use certain results of Zou \cite{ZouThese} 1.10
to prove an analogue~of~Theo\-rem \ref{MAINTHEOINTRO} 
for $\CB_{K/F,\ell}$~(see Paragraph \ref{BClinearcyclic}),
and give
an example of integral cuspidal $\qlb$-representa\-tions~$\pi_1$,
$\pi_2$ of $\GL_2(F)$ such that
$\r_\ell(\pi_1) = \r_\ell(\pi_2)$
but $\r_\ell(\CB_{\ell}(\pi_1))\neq\r_\ell(\CB_{\ell}(\pi_2))$
(Para\-graph \ref{examCBcyc}).

\subsection{}

We thank 
R.~Abdellatif,
H.~Atobe,
R.~Beuzart-Plessis,
P.-H.~Chaudouard,
W.~T.~Gan,~H.~Grob\-ner,
G.~Henniart,
T.~Lanard,
E.~Lapid,
J.~Mahnkopf, 
N.~Matringe,
A.~Moussaoui,
D.~Prasad,~S. W.~Shin,~S. Stevens,
O.~Taïbi and
H.~Yu
for stimulating discussions about this work.

We thank Guy Henniart for having accepted to write Appendix \ref{app2}. 

This work was partially supported by 
the Erwin Schrödinger Institute in Vienna,
when we~be\-nefited from a 2020 Research in Teams grant.
We thank the institute for hospitality, 
and for~ex\-cel\-lent working conditions.

A.~M\'\i nguez thanks the JSPS and the university of Kyoto for hospitality
and ex\-cel\-lent working conditions
when the last part of this work was done. His research was partially funded by the Principal Investigator project PAT4832423 of the Austrian Science Fund (FWF).

V.~S\'echerre also thanks the 
Institut Universitaire de France
for support and ex\-cel\-lent working conditions
when this work was done. 

\section*{Notation}

Throughout the paper,
let $p$ be a prime number, 
let $\QQ_p$ be the field of $p$-adic numbers and
let $\qpb$ be an algebraic~clo\-sure of $\QQ_p$.
By a $p$-adic field,
we mean a finite extension of $\QQ_p$ in $\qpb$.

\section{Globalizing quadratic and Hermitian forms}
\label{global1}

The purpose of this section is 
to prove the~fol\-lowing result.

\begin{theo}
\label{GLOBALG}
Let $F$ be a $p$-adic field and let $G$~be~a quasi-split 
special orthogonal, unitary or symplectic group over $F$.
There exist a totally real~num\-ber field $k$
and a connected reductive group $\GG$ over $k$ 
such that
\begin{enumerate}
\item
$\GG$ is an inner form of a quasi-split special orthogonal,
unitary or symplectic $k$-group,
\item 
there is a finite place $w$ of $k$ above $p$ such that
$k_w=F$ and $\GG(F)$ is isomorphic to $G$, 
\item 
the group $\GG(k_v)$ is compact for any real place $v$,
and quasi-split for any finite place $v$. 
\end{enumerate}
\end{theo}

This theorem will be used in Section \ref{secglobalisation}
where we prove the existence of automorphic~represen\-ta\-tions
of $\GG(\AA)$ with prescribed conditions on their local components,
where $\AA$ denotes the ring of ad\`eles of $k$.

In Section \ref{argumentfinal}, 
we will need a stronger version of Theorem \ref{GLOBALG}:
in order to transfer automorphic representations of $\GG(\AA)$ to a
general linear group, 
we will need to realize $\GG$ as a pure inner form in
the orthogonal and unitary cases,
and a rigid inner form in the symplectic case.
This~is~why,
rather than Theorem \ref{GLOBALG}, 
we will prove the stronger Theorems \ref{THEOFQ} and \ref{THEOFH} below.
For the~sym\-plec\-tic case,
see Paragraph \ref{THEOFS}.

We emphasize that $p$ may be equal to $2$ in this section. 

\subsection{Quadratic forms}
\label{S31}

In this paragraph,
$\k$ denotes either a $p$-adic field for some prime number $p$,
or a real Archime\-dean local field, or a totally real number field,
and $q$ denotes a (non-degenerate) quadratic form on~a 
$k$-vector space of~di\-mension $d\>2$.
There exists a choice of non-zero scalars $\l_1,\dots,\l_d\in\k^\times$
such that~$q$~is equi\-valent to the quadratic form
$\l_1^{}x_1^2 + \dots + \l_d^{}x_d^2$
on $k^d$.
The quantity 
\begin{equation*}
\d = \d(q) = 
\l_1\dots\l_d \text{ mod } \k^{\times2} \in \k^{\times}/\k^{\times2}
\end{equation*} 
does not depend on this choice.
It is called the discriminant of $q$.
In the sequel,
we assume that the discriminant $\d$ is fixed.
All quadratic forms are assumed to be non-degenerate.

If $\k$ is a $p$-adic field,
then $q$ is, up to equivalence,
uniquely deter\-mined by its Hasse invariant 
\begin{equation*}
\e(q) = \prod\limits_{i<j} (\l_i,\l_j) \in\{-1,1\}
\end{equation*}
where $(\cdot,\cdot)$ is the Hilbert symbol over $k$
(see \cite{SerreCoursArithm} IV.2.3 Theorem 7
or \cite{JacobsonBasicAlgebra2} Theorem 9.24).

If $\k$ is isomorphic to the field of real numbers, 
$q$ is, up to equivalence,
entirely determined~by its signature $(a,b)$ with 
$a+b=d$ and $(-1)^b=\d$. 
Its Hasse invariant is equal to $(-1)^{b(b-1)/2}$.
If $\d>0$,
then $b=2c$ for some
$c\in\{0,\dots,\lfloor d/2\rfloor\}$
and the Hasse invariant is $(-1)^{c}$.

Now suppose that $\k$ is a totally real number field, and 
$\d_v>0$ for all real places $v$.
The Hasse principle (see \cite{Scharlau}
Theorem 6.6.6)~en\-su\-res that $q$~is uniquely determined,
up to equivalence,~by all 
its localizations $q_v=q\otimes_k k_v$,~where $v$ ranges
over all places of $k$.
In other words,~it is~deter\-mined by
the Hasse invariants $\e(q_v)$~for all finite $v$ and
the signatures $(d-2c(q_v),2c(q_v))$ for all real $v$.
Conversely, a family
\begin{equation*}
((\e_v)_{\text{$v$ finite}},(c_v)_{\text{$v$ real}}),
\quad \e_v\in\{-1,1\}, 
\quad c_v\in\{0,\dots,\lfloor d/2\rfloor\}, 
\end{equation*}
corresponds to a (unique) quadratic form of dimension $d$ over $k$
and discriminant $\d$ if and only~if one has
$\e_v=1$ for almost all finite places $v$ and
\begin{equation}
\label{globqf}
\prod\limits_{\text{$v$ finite}} \e_v \cdot
\prod\limits_{\text{$v$ real}} (-1)^{c_v} = 1
\end{equation}
(see \cite{Scharlau}~Theo\-rem 6.6.10).
We give more details in \S\ref{S32} and \S\ref{S33},
depending on the parity of $d$. 

\subsection{The odd orthogonal case}
\label{S32}

If $\k$ is a $p$-adic field,
there are two equivalence classes of quadra\-tic~forms of dimension $2n+1$ 
and discriminant $\d$,
in bijection with $\{-1,1\}$ through~the~Hasse invariant.
The special orthogonal groups
associated with these quadratic forms are non-iso\-mor\-phic.
The one with Hasse invariant
\begin{equation}
\label{HasseqsSOodd}
(-1,-1)^{n(n+1)/2}\cdot(-1,\d)^n
\end{equation}
(that is $x_1x_2+\dots+x_{2n-1}x_{2n} + (-1)^n\d x_{2n+1}^2$)
is split.
The other one is non-quasi-split. 

If $\k$ is isomorphic to the field of real numbers, 
there are $n+1$ equivalence classes of quadra\-tic~forms of dimension $2n+1$ 
and discriminant $\d$.
The special~or\-thogonal groups
associated with these quadratic forms are non-iso\-mor\-phic.
Exactly~one of them is compact:
this is the one with signature $(2n+1,0)$ if $\d>0$,
and $(0,2n+1)$ if $\d<0$.

\begin{prop}
\label{oddq}
Let $\k$ be a totally real number field of degree $r$,
and $\d\in\k^{\times}/\k^{\times2}$.
Suppose that $\d_v>0$ for all real places $v$.
There is a quadratic form $q$ of dimension $2n+1$ and
discrimi\-nant $\d$ such that $\SO(q)$ is
compact at all real places and quasi-split at all finite places 
if and only if $rn(n+1)/2$ is even.
When this is the case, $q$ is unique up to equivalence. 
\end{prop}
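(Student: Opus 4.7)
The plan is to translate the two requirements place by place into constraints on the local invariants $(\e_v)_{v\text{ finite}}$ and $(c_v)_{v\text{ real}}$, and then combine them through the global consistency relation \eqref{globqf} and Hilbert reciprocity into a single parity condition on $rn(n+1)/2$.

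First I would determine the local invariants forced on such a form $q$. At a real place $v$, the relation $(-1)^{b_v}=\d_v$ together with the hypothesis $\d_v>0$ forces $b_v$ to be even; compactness of $\SO(q_v)$ means $q_v$ is definite, hence $b_v\in\{0,2n+1\}$, so necessarily $b_v=0$ and $c_v=0$. At a finite place $v$, the quasi-split special orthogonal group of odd dimension $2n+1$ and discriminant $\d_v$ is the one singled out by \eqref{HasseqsSOodd}, so
\begin{equation*}
\e_v=(-1,-1)_v^{n(n+1)/2}\,(-1,\d_v)_v^{n}.
\end{equation*}
Because $\d$ is a unit at almost all finite places, this also gives $\e_v=1$ for almost every finite $v$, so that the family indeed qualifies for the global consistency relation.

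Inserting these values into \eqref{globqf} and using $c_v=0$ at each of the $r$ real places, the consistency condition reduces to
\begin{equation*}
\prod_{v\text{ finite}}\e_v=\left(\prod_{v\text{ finite}}(-1,-1)_v\right)^{n(n+1)/2}\left(\prod_{v\text{ finite}}(-1,\d_v)_v\right)^{n}=1.
\end{equation*}
Hilbert reciprocity yields $\prod_{v\text{ finite}}(-1,-1)_v=\prod_{v\text{ real}}(-1,-1)_v=(-1)^r$, and likewise $\prod_{v\text{ finite}}(-1,\d_v)_v=\prod_{v\text{ real}}(-1,\d_v)_v=1$, the last equality because $\d_v>0$ at every real $v$. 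Hence the finite product of Hasse invariants equals $(-1)^{rn(n+1)/2}$, and \eqref{globqf} holds if and only if $rn(n+1)/2$ is even.

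Conversely, once the parity condition is satisfied, the family of local invariants constructed above satisfies \eqref{globqf}, so \cite{Scharlau} Theorem~6.6.10 produces a quadratic form $q$ of dimension $2n+1$ and discriminant $\d$ realising them; uniqueness is then automatic, since at each place the local invariants of $q$ are entirely dictated by the two requirements. I anticipate no serious obstacle: the whole argument is a careful accounting of signs, the only subtle point being the factor $(-1)^r$ contributed by $(-1,-1)_v=-1$ at each of the $r$ real places in Hilbert reciprocity; everything else reduces to the local classification recalled in \S\ref{S31}.
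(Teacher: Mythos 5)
Your proof is correct and follows essentially the same route as the paper's: determine the local invariants forced by compactness at real places and quasi-splitness at finite places, then apply Hilbert reciprocity to the global consistency relation \eqref{globqf} to reduce to the parity of $rn(n+1)/2$. The only cosmetic difference is that you spell out a little more explicitly why $c_v=0$ at real places and separate the two Hilbert-reciprocity products, whereas the paper compresses these into one line.
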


\begin{proof}
A quadratic form $q$ over $k$ of dimension $2n+1$ and discriminant $\d$ 
is entirely determi\-ned,~up to equivalence,
by the Hasse invariants $\e(q_v)\in\{-1,1\}$ for all
finite places $v$ and the~si\-gna\-tu\-res
$(2n+1-2c(q_v),2c(q_v))$ for all real places $v$ of $k$. 
Non-equivalent quadratic forms~de\-fine~non-isomorphic
special orthogonal groups.

For $\SO(q)$ to be compact at all real places and quasi-split at all 
finite places,
$q$ must have~in\-variants $c_v=0$ for all real $v$ and
$\e_v=(-1,-1)_v^{n(n+1)/2}\cdot(-1,\d_v)_v^{n}$
for all finite $v$,
where $(\cdot,\cdot)_v$ is the Hilbert symbol with respect to $k_v$.
By \eqref{globqf}, such a $q$ exists if and only if
\begin{equation*}
\prod\limits_{\text{$v$ finite}} (-1,-1)_v^{n(n+1)/2}
\times \prod\limits_{\text{$v$ finite}} (-1,\d_v)_v^{n}
= 1.
\end{equation*}
Thanks to the Hilbert reciprocity law (\cite{OMeara} VII),
the left hand side is equal to 
\begin{equation*}
\prod\limits_{\text{$v$ real}} (-1,-1)_v^{n(n+1)/2}
\times \prod\limits_{\text{$v$ real}} (-1,\d_v)_v^{n}
= (-1)^{rn(n+1)/2}
\end{equation*}
(since $\d_v>0$ for all real $v$), which gives the expected result. 
\end{proof}

\begin{rema}
Given any $k$,
let $q$ be a quadratic form of dimension $2n+1$ and discriminant~$1$ over $k$.
Then,
for any $\d\in\k^\times$,
the quadratic form $\d q$ has discriminant $\d$ and $\SO(\d q)=\SO(q)$.
\end{rema}

\subsection{The even orthogonal case}
\label{S33}

In this paragraph,
we assume that the dimension of $q$ is $2n$.
It will be convenient to use the normalized discriminant
$\a=(-1)^n\d$.

Suppose first that $\k$ is a $p$-adic field.
\begin{itemize}
\item[$\bullet$] 
If $n=1$, 
there is only one equivalence class of quadratic forms of dimension $2$ 
and~norma\-lized dis\-cri\-mi\-nant $\a=1$.
Its Hasse invariant is $1$.
The special orthogonal group associated with it is~iso\-mor\-phic~to~$\GL_1(k)$.
\item[$\bullet$]
Suppose that $n\>2$ or $\a\neq1$.
There are two equivalence classes of quadratic forms~of~di\-mension
$2n$ and discrimi\-nant $\d$,
characterized by their Hasse~in\-variant.
The special orthogonal groups
associated with them are non-iso\-mor\-phic
if and only if $\a=1$.  
When~this is the case,~the one with Hasse invariant $(-1,-1)^{n(n-1)/2}$
(that is the quadratic form $x_1x_2+\dots+x_{2n-1}x_{2n}$)~is
split, and the~other one is non-quasi-split.
Other\-wi\-se,
let $l$ be the quadratic extension of $k$~gene\-ra\-ted by a square root of $\a$:
if~$q$
is a~qua\-dratic~form of dimension $2n$ and discriminant~$\d$ over~$k$, 
then $\l q$ has same discriminant and~op\-posite Hasse invariant
for any scalar $\l\in\k^\times$ which is not an $l/k$-norm,
and $\SO(\l q)=\SO(q)$.
\end{itemize}

If $\k$ is isomorphic to the field of real numbers, 
there are $n+1$ equivalence classes of quadra\-tic~forms of dimension $2n$ 
and discriminant $\d$. 
Quadratic forms with signatures $(a,b)$ and $(a',b')$ 
define isomorphic special orthogonal groups if and only if one has 
$b'\in\{a,b\}$.
If $\d<0$, there is no compact special orthogonal group.
If $\d>0$,
there~is~exact\-ly~one compact special orthogonal group:
this is the one with $b\in\{0,2n\}$.

\begin{prop}
\label{evenq}
Let $\k$ be a totally real number field of degree $r$,
and $\d\in\k^{\times}/\k^{\times2}$.
Suppose that $\d_v>0$ for all real places $v$.
\begin{enumerate}
\item 
There is a quadratic form $q$ of dimension $2n$ and
discrimi\-nant $\d$ such that $\SO(q)$ is~compact
at all real places and quasi-split at all finite places 
if and only if either $n$ is odd, or $\d\neq(-1)^n$, 
or $rn(n-1)/2$ is even. 
\item
Assume that $\d\neq(-1)^n$.
For any finite place $w$ such that $\d_w\neq(-1)^n$
and any $\e\in\{-1,1\}$,
there is a quadratic form $q$ as in (1) satisfying 
the extra condition $\e(q\otimes k_w)=\e$.
\end{enumerate}
\end{prop}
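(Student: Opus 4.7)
The plan is to reduce the existence of $q$ to the existence of a compatible family $((\e_v)_v,(c_v)_v)$ of local invariants: local compactness at real places and local quasi-splitness at finite places impose constraints place by place, and the global question is then whether these constraints are compatible with the product formula \eqref{globqf}. Set the normalized discriminant $\a=(-1)^n\d$. At each real place $v$, compactness of $\SO(q_v)$ combined with $\d_v>0$ forces the signature to be $(2n,0)$ or $(0,2n)$, i.e.\ $c_v\in\{0,n\}$; hence $(-1)^{c_v}\in\{1,(-1)^n\}$, which can be freely prescribed in $\{\pm1\}$ when $n$ is odd and equals $1$ automatically when $n$ is even. At each finite place $v$, by \S\ref{S33}, quasi-splitness of $\SO(q_v)$ imposes no constraint on $\e_v$ when $\a_v\neq1$, whereas $\e_v$ is forced to equal $(-1,-1)_v^{n(n-1)/2}$ when $\a_v=1$.

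For part (1), I would distinguish cases depending on whether $\a=1$ globally, i.e.\ $\d=(-1)^n$. If $\a\neq1$ then the set of finite places $v$ with $\a_v\neq1$ is infinite (by Chebotarev applied to the quadratic extension $k(\sqrt{\a})/k$), and at any such place $\e_v$ is free, so \eqref{globqf} can always be enforced and $q$ exists. If $\a=1$ then $\e_v$ is forced at every finite $v$, and Hilbert reciprocity (\cite{OMeara} VII) gives
\begin{equation*}
\prod\limits_{\text{$v$ finite}}\e_v=\prod\limits_{\text{$v$ finite}}(-1,-1)_v^{n(n-1)/2}=\prod\limits_{\text{$v$ real}}(-1,-1)_v^{n(n-1)/2}=(-1)^{rn(n-1)/2}.
\end{equation*}
When $n$ is odd, $\prod_{\text{$v$ real}}(-1)^{c_v}$ can be tuned to any element of $\{\pm1\}$, so \eqref{globqf} always holds; when $n$ is even this factor is $1$, and \eqref{globqf} is equivalent to $rn(n-1)/2$ being even. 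Collecting the three cases yields the stated criterion.

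For part (2), I would use the same type of freedom, but at two finite places rather than one. Given $w$ with $\a_w\neq1$ and $\e\in\{\pm1\}$, pick a second finite place $w'\neq w$ with $\a_{w'}\neq1$, which exists by the Chebotarev argument above. Set $\e_w=\e$, leave $\e_v$ at its forced value whenever $\a_v=1$, set $\e_v=1$ at finite $v\neq w,w'$ with $\a_v\neq1$, and pick $(c_v)$ arbitrarily in $\{0,n\}$ at real places; then the free choice of $\e_{w'}\in\{\pm1\}$ can be made so as to enforce \eqref{globqf}, producing the desired $q$.

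I expect the main difficulty to be the bookkeeping in part (1), in particular the case $\a=1$: one must correctly invoke Hilbert reciprocity to rewrite the global product of forced Hasse symbols $(-1,-1)_v^{n(n-1)/2}$ as the sign $(-1)^{rn(n-1)/2}$. The side condition that $\e_v=1$ for almost all finite $v$ is automatic, since $(-1,-1)_v=1$ for almost all $v$ and only finitely many $\e_v$'s are altered from the default value.
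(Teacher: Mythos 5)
Your proof is correct and follows essentially the same strategy as the paper's: reduce to the product formula \eqref{globqf} with the local constraints $c_v\in\{0,n\}$ at real places and $\e_v=(-1,-1)_v^{n(n-1)/2}$ at finite $v$ with $\a_v=1$, then adjust the free invariants (real signatures when $n$ is odd, Hasse invariants at finite $v$ with $\a_v\neq1$) and apply Hilbert reciprocity in the remaining case. The only cosmetic difference is that you organize the case split around $\a=1$ versus $\a\neq1$ (and invoke Chebotarev for the existence of extra finite places with $\a_v\neq1$), while the paper lists $n$ odd, $\a\neq1$, and ($n$ even and $\a=1$) as the three cases; this is equivalent, and in fact, under the standing hypothesis $\d_v>0$ at all real places, the combination $\a=1$ with $n$ odd cannot occur, so your case analysis covers the same ground.
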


\begin{proof}
A quadratic form $q$ over $k$ of dimension $2n$ and discriminant $\d$ 
is entirely determined, up to equivalence, 
by the Hasse in\-variants $\e(q_v)\in\{-1,1\}$ for all finite 
places $v$ and the signatures
$(2n-2c(q_v),2c(q_v))$ for all real places $v$.
A quadratic form $f$ with same dimension and discri\-mi\-nant as $q$
defines a special orthogonal group 
isomorphic to $\SO(q)$ if and only if
they have the same Hasse~in\-va\-riants
for all finite $v$ such that $\a_v=1$,
and $c(f_v)\in\{n-c(q_v),c(q_v)\}$ for all real places $v$. 

For $\SO(q)$ to be compact at all real places and quasi-split at all 
finite places,
$q$ must have~in\-variants $c_v\in\{0,n\}$ for all real places $v$ and 
$\e_v=(-1,-1)_v^{n(n-1)/2}$
for all finite places $v$ such that $\a_\v=1$.
(Recall that $\a=(-1)^n\d$.)
By \eqref{globqf}, such a $q$ exists if and only if
\begin{equation*}
\label{condscharlau}
\prod\limits_{\text{$v$ finite} \atop{\a_\v\neq1}} \e_v \times
\prod\limits_{\text{$v$ finite} \atop{\a_\v=1}} (-1,-1)_v^{n(n-1)/2} \times
(-1)^{ns} = 1
\end{equation*}
where $s$ is the number of real places such that $c_v=n$.
If $n$ is odd,
we may adjust $s\in\{0,\dots,r\}$ so that
this product is $1$.
If $\a\neq1$,
we may adjust the signs $\e_v$ for the finite $v$ such that 
$\a_\v\neq1$ so that this product is $1$.
(Since the number of such $v$ is at least $2$,
we may even assume that~$\e_w$ is equal to a given sign $\e$
for a given $w$ as in (2).)
If $n$ is even and $\a=1$, the condition is 
\begin{equation*}
\prod\limits_{\text{$v$ finite}} (-1,-1)_v^{n(n-1)/2} = 1
\end{equation*}
and Hilbert's reciprocity law says that the left hand side is equal to
\begin{equation*}
\prod\limits_{\text{$v$ real}} (-1,-1)_v^{n(n-1)/2} = (-1)^{rn(n-1)/2},
\end{equation*}
which gives the expected result. 
\end{proof}

\subsection{Globalizing the base field}

The following lemma will be useful in the remainder of this section.

\begin{lemm}
\label{arthurtotallyreal}
Let $F$ be a $p$-adic field. 
\begin{enumerate}
\item 
There exists~a totally real number field $\k$ of even degree such that $k_w=F$ 
for some finite place $w$ of $k$ dividing $p$.
\item 
{If $p\neq2$,}
we may further assume that there exists a finite place $u$ of $k$ 
such that $k_u\simeq\QQ_2$.
\end{enumerate} 
\end{lemm}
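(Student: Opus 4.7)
The plan is to realize $F$ as the completion of a number field of the form $k=\QQ[X]/(P)$ for a cleverly chosen polynomial $P\in\QQ[X]$, invoking Krasner's lemma to pin down the $p$-adic behaviour of $P$ and weak approximation in $\QQ$ to simultaneously impose archimedean (and, for (2), also $2$-adic) constraints.

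For part (1), set $n=[F:\QQ_p]$ and write $F=\QQ_p(\a)$ with $m\in\QQ_p[X]$ the minimal polynomial of $\a$. Krasner's lemma yields an $\e>0$ such that any monic $P\in\QQ_p[X]$ of degree $n$ whose coefficients are $\e$-close $p$-adically to those of $m$ remains irreducible over $\QQ_p$ and has a root generating $F$. By the density of $\QQ$ in $\QQ_p\times\RR$, one can pick $P\in\QQ[X]$ whose coefficients are simultaneously $\e$-close to those of $m$ in $\QQ_p$ and close in $\RR$ to those of $\prod_{i=1}^n(X-i)$; proximity to the latter forces $P$ to have $n$ distinct real roots, while proximity to $m$ forces $P$ to be irreducible over $\QQ_p$, hence over $\QQ$. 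Then $k_0:=\QQ[X]/(P)$ is a totally real number field of degree $n$ with a unique prime $w_0\mid p$ satisfying $(k_0)_{w_0}\simeq F$.

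If $n$ is even, take $k:=k_0$ and $w:=w_0$. If $n$ is odd, adjoin $\sqrt{d}$ for a totally positive $d\in\QQ$ that is a square in $\QQ_p$ (hence in $F$) but not a square in $k_0$. Such $d$ exist in abundance: squares in $\QQ_p$ cut out a subgroup of $\QQ^\times$ of finite index, whereas only finitely many classes in $\QQ^\times/\QQ^{\times2}$ become trivial in $k_0^\times/k_0^{\times2}$, since they correspond to quadratic subfields of $k_0$. Then $k:=k_0(\sqrt{d})$ is totally real of degree $2n$, and $w_0$ splits in $k/k_0$ because $d$ is already a square in $F=(k_0)_{w_0}$, yielding a prime $w$ of $k$ with $k_w\simeq F$.

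For part (2), inject a third approximation condition when constructing $P$: it should also lie $2$-adically close to a monic polynomial of degree $n$ having a simple rational root. By Hensel's lemma, $P$ then splits off a linear factor over $\QQ_2$, providing a prime $u_0\mid 2$ of $k_0$ with $(k_0)_{u_0}\simeq\QQ_2$. When $n$ is odd, additionally require that the auxiliary $d$ be a square in $\QQ_2$ as well, which is possible by CRT; concretely, primes $d\equiv1\pmod 8$ that are quadratic residues modulo $p$ exist by Dirichlet's theorem. Then $u_0$ splits in $k/k_0$ and produces a place $u$ of $k$ with $k_u\simeq\QQ_2$. The only delicate point throughout is ensuring the nontriviality of $k_0(\sqrt{d})/k_0$, which reduces to the algebraic observation above on square classes; the remaining assertions follow mechanically from Krasner's lemma, Hensel's lemma, and weak approximation.
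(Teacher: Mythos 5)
Your proof is essentially the same as the paper's: both use Krasner's lemma to control the $p$-adic completion, weak approximation to impose the archimedean (and, for part (2), $2$-adic) conditions, and a quadratic extension to fix the parity of the degree. One small difference: the paper handles the parity step by adjoining a root of a degree-$2$ polynomial over $\QQ$ that splits over $\RR$, $\QQ_p$ (and $\QQ_2$), without explicitly arguing that the resulting field is a proper extension of $k_0$; you make that point explicit by choosing $d$ in a class of $\QQ^\times/\QQ^{\times2}$ that survives in $k_0^\times/k_0^{\times2}$, which is a genuine (if minor) improvement in rigor. Another cosmetic difference: the paper arranges that $2$ be totally split (all $n$ roots of $P$ in $\QQ_2$), while you only ask for one simple $\QQ_2$-rational root; both suffice for the conclusion.
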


\begin{proof} 
We follow the proof of \cite{Arthur} Lemma 6.2.1.
Let us write $F=\QQ_p(\b)$ for some root $\b\in\F$ of a monic irreducible
polynomial $f$ of degree $r=[F:\QQ_p]$ with coefficients in $\QQ_p$.
Given a field $E$, we
identify the space of monic polynomials of degree $r$ with coefficients in $E$ 
with $E^r$.

By Krasner's lemma (see \cite{Robert} 3.1.5), 
there is an open neighborhood $U_p$ of $f$ in $\QQ_p^r$ such that
any~$g\in U_p$ has a root $\b'\in\overline{\QQ}_p$ such that
$\QQ_p(\b')=F$.
Let $U_\infty$ be the open subset of $\RR^r$ made of all monic polynomials with
$r$ distinct real roots.
Since the diagonal image of $\QQ^r$ in $\RR^r\times\QQ_p^r$ is dense,
the intersection
\begin{equation*}
\QQ^r \cap (U_\infty\times U_p)
\end{equation*}
is non-empty.
We may replace $f$ by a polynomial in this intersection,
which we still denote~by~$f$. 
The number field $k=\QQ(\b)$ is totally real,
and $k_w=F$ for some~fi\-ni\-te~place $w$ of $k$ dividing $p$.
If the degree of $k$ is even, we are done.
Otherwise, we
choose a monic irreducible polynomial~$g$~of degree $2$ over $\QQ$
which splits over $\RR$ and $\QQ_p$, 
whose existence can be proven in the same way as above.
Then replace $k$ by $k(\g)$ where $\g$ is a root~of $g$ in $\QQ_p$.

Suppose now that $p\neq2$,
and let $U_2$ be the open subset of $\QQ_2^r$ made of all monic polynomials with
$r$ distinct roots in $\QQ_2$.
We may replace $f$ by a polynomial in
$\QQ^r \cap (U_\infty\times U_p \times U_2)$, 
which we still denote~by~$f$. 
The number field $k=\QQ(\b)$ is totally real,
$k_w=F$ for some~fi\-ni\-te~place $w$ of $k$
dividing $p$,
and $2$ is totally split in $k$. 
If the degree of $k$ is even, we are done.
Otherwise,~we
choose a monic irreducible polynomial~$g$~of degree~$2$~over~$\QQ$
which splits over $\RR$, $\QQ_p$ and $\QQ_2$,
then replace $k$ by $k(\g)$ where $\g$ is a root~of $g$ in $\QQ_p$. 
\end{proof}

\begin{rema}
\label{arthurtotallyimag}
With a similar argument, 
one can prove in addition to part (1) of Lemma \ref{arthurtotallyreal} that, 
if $E$ is a quadratic extension of $F$ in $\qpb$,
there is a totally imaginary quadratic extension $l$ of $k$ such that 
$l_w=E$.
\end{rema}

\begin{rema}
Part (2) of Lemma \ref{arthurtotallyreal} 
will be needed in Section \ref{argumentfinal},
\textit{in the symplectic case},
in order to apply the results 
of Appendix \ref{app2}.
\end{rema}

\subsection{Proof of Theorem \ref{GLOBALG} in the
special orthogonal case}

We prove Theorem \ref{GLOBALG} in the case where $G$ is special orthogonal, 
that is, 
there is a quadratic form $Q$ over $F$ such that 
$G$ is isomorphic to $\SO(Q)$.
We will prove the following stronger~re\-sult. 

\begin{theo}
\label{THEOFQ}
Let $Q$ be a quadratic form over $F$
such that $\SO(Q)$ is quasi-split.
There~exist a totally real number field $\k$
and a quadratic form $q$ over $k$ such that
\begin{enumerate}
\item 
there is a finite place $w$ of $k$ dividing $p$ such that 
\begin{enumerate}
\item 
the field $k_w$ is equal to $F$,
\item
the~qua\-dra\-tic forms $q\otimes F$ and $Q$ are equivalent,
\end{enumerate}
\item
the group $\SO(q\otimes\k_v)$ is compact for all real $v$, 
and quasi-split for all finite $v$.
\end{enumerate}
\end{theo}

\begin{proof}
By Lemma \ref{arthurtotallyreal},
there exists a totally real number field $\k$ of even degree such that~$k_w$ and 
$F$ are equal for some finite place $w$ of $k$ dividing $p$.
Fix a $\g\in\F^\times$ such that the~discri\-minant of $Q$
is $\g \F^{\times2}$,
and fix a $\d\in k^\times$ such that $\g^{-1}\d_w\in\F^{\times2}$
and $\d_v>0$ for all real~$v$.

By Proposition \ref{oddq} when $Q$ has odd dimension 
and Proposi\-tion~\ref{evenq} when $Q$ has even dimension,
there is a quadratic form $q$ of discriminant $\d$ 
satisfying (2).
Moreover,
the quadratic forms
$q\otimes F$ and $Q$ have the same~dis\-cri\-minant and
define quasi-split special orthogonal groups. 

If $Q$ has odd dimension,
or if $Q$ has dimension $2n$ and $\g=(-1)^n$,
they are thus equivalent.

Otherwise, 
use Proposition~\ref{evenq}(2) with $\e=\e(Q)$
to ensure that 
$q\otimes F$ and $Q$~have the same Hasse invariant:
they are thus equivalent.
\end{proof}

\begin{rema}
\label{sosplit}
In addition to Theorem \ref{THEOFQ},
there is always a finite place $u\neq w$ of $k$ such that 
the group $\SO(q\otimes{k_u})$ is split: 
one can choose
\begin{enumerate}
\item 
any finite place different from $w$ in the odd orthogonal case, 
\item 
any finite place $u\neq w$ such that
$(-1)^n\d_u\in\k_u^{\times2}$
in the even orthogonal case. 
\end{enumerate}
\end{rema}

\subsection{Hermitian forms}
\label{SHERM}

In this paragraph, 
$l$ is a separable quadratic $k$-algebra
(where $k$ is as in Paragraph \ref{S31})
and $h$ is a (non-degenerate) $l/k$-Hermi\-tian form on~an 
$l$-vector space of~di\-mension $n\>1$.
There exists a choice of non-zero scalars $\l_1,\dots,\l_n\in\k^\times$
such that~$h$~is equi\-valent to the $l/k$-Hermitian form
$\l_1^{}\N_{l/k}(x_1) + \dots + \l_n^{}\N_{l/k}(x_n)$
on $l^n$. 
The quantity 
\begin{equation*}
\d = \d(h) = \l_1\dots\l_n \text{ mod }\N_{l/k}(l^\times) \in \k^{\times}/\N_{l/k}(l^\times)
\end{equation*}
does not depend on this choice.
It is called the discriminant of $h$. 
Fix an $\a\in\k^{\times}$ such that~$l$~is iso\-morphic to the $k$-algebra
$k[X]/(X^2-\a)$.
The image of $\a$ in $\k^\times/\k^{\times2}$ will still be denoted~$\a$.

Up to equivalence,
$h$ is uniquely determined by its trace form $t$,
that~is, the quadratic form~of dimension $2n$ over $k$
obtained by seeing $l^n$ as~a $k$-vector space
(\cite{Scharlau} Theo\-rem 10.1.1).

If $l$ is split,
that is, if $l\simeq k\times k$,
then $\N_{l/k}(l^{\times})=k^\times$ and we may choose $\a=1$.
There~is,~up~to equivalence, 
a unique $l/k$-Hermi\-tian form of dimension $n$.
Its discriminant is trivial, and 
the~uni\-ta\-ry group associa\-ted~with it is
(non-canonically) isomorphic to $\GL_n(k)$. 
More precisely, if one fixes an isomorphism $l\simeq k\times k$
of $k$-algebras,
$h$ identifies with a non-degenerate
bilinear form~on $k^n\times k^n$,
the group $\GL_n(l)$ identifies with
$\GL_n(k)\times\GL_n(k)$
and there is an isomorphism
\begin{eqnarray}
\label{isoUGL}
\GL_n(k) &\simeq& \U(h) \\
\notag
g &\mapsto& (g,g^*)
\end{eqnarray}
where $g^*$ is the contragredient of $g\in\GL_n(k)$ with respect to $h$.
(Note that changing the~isomor\-phism $l\simeq k\times k$ has the
effect of exchanging $g$ and $g^*$ in \eqref{isoUGL}.)
Also, the trace form $t$ of $h$ is maximally ~iso\-tropic,
that is, it is the sum of $n$ hyperbolic planes.

If $l$ is a quadratic extension of $k$,
a quadratic form of dimension $2n$ over $k$ is the trace form~of an 
$l/k$-Hermitian form if and only if $q\otimes_kl$ is maximally isotropic 
(\cite{Scharlau} Theo\-rem 10.1.2).

If $l/\k$ is a quadratic extension of $p$-adic fields, 
there are two equi\-va\-lence classes of $l/\k$-Hermi\-tian forms of dimension $n$,
in bijection with $\k^{\times}/\N_{l/k}(l^{\times})$ through~the 
dis\-criminant.
\begin{itemize}
\item[$\bullet$] 
If $n$~is odd, 
the uni\-tary groups associated with these Hermitian forms are~iso\-mor\-phic.
{More precisely,
if $\a\neq1$ and $h$ is a Hermitian form of odd dimension over $k$, 
then $\d h$
is unequivalent~to $h$ for any 
$\d\in\k^\times$ such that $\d\notin\N_{l/k}(l^\times)$,
and the group $\U(\d h)=\U(h)$ is quasi-split.}
\item[$\bullet$] 
If $n$ is~even, the~uni\-tary group corresponding to the discriminant 
$(-1)^{n/2}$ is~quasi-split,~and the other one is non-quasi-split.
The trace form $t$ of $h$ has discriminant $(-\a)^n$ and Hasse~in\-va\-riant 
\begin{equation}
\e(t) = (\a,\d) \cdot (-\a,-1)^{n(n-1)/2}.
\end{equation}
\end{itemize}

If $l/\k$ is isomorphic to $\CC/\RR$,
the Hermitian form $h$
is uniquely determined, up to~equi\-va\-lence, by
its~si\-gnature~$(a,b)$ with $a+b=n$.
Its discriminant is $(-1)^b$.
Its trace form $t$ has discrimi\-nant~$1$~and signature $(2a,2b)$.
The unitary group $\U(h)$ is compact if and only if $b\in\{0,n\}$.

If $l$ is a totally imaginary quadratic extension of a totally real number
field $k$ 
(thus $\a_v<0$~for all real places $v$ of $k$), 
then $h$~is~uni\-quely determi\-ned, up to equivalence, by any one of 
the~fol\-lo\-wing data:
\begin{enumerate}
\item 
the equivalence class of its trace form $t$,
\item
the Hasse invariants $\e(t_v)$ for all finite $v$
and the integers $b(t_v)$ for all real $v$,
\item
the equivalence classes of its localizations $h_v=h\otimes_k k_v$ for all $v$,
\item
the discriminants $\d(h_v)$ for all finite $v$
and the integers $b(h_v)$ for all real $v$.
\end{enumerate}
We have just seen that (3) and (4) are equivalent, 
and we have seen that (1) and (2) are equiva\-lent in Paragraph \ref{S31}.
Now the fact that (2) and (3) are equivalent follows form the formulas 
\begin{equation*}
\e(t_v) = (\a_v,\d(h_v))_v \cdot (-\a_v,-1)_v^{n(n-1)/2} 
\text{ for finite $v$,}
\quad
b(t_v)=2b(h_v)
\text{ for real $v$,} 
\end{equation*}
the first one including the case where $l_v=l\otimes_k k_v$ splits over $k_v$
(as $\a_v=1$ in this case)
and the fact that, 
when $\a_v\neq1$,
the map $x\mapsto(\a_v,x)_v$
is a bijection from
$\k_v^\times/\N_{l_v/k_v}(l_v^\times)$~to~$\{-1,1\}$.~Con\-ver\-sely, a family
\begin{equation}
\label{locinvh}
((\d_v)_{\text{$v$ finite}},(b_v)_{\text{$v$ real}}),
\quad \d_v\in\k_v^\times/\N_{l_v/k_v}(l_v^\times), 
\quad b_v\in\{0,\dots,n\}, 
\end{equation}
corresponds to an $l/k$-Hermitian form of dimension $n$ 
if and only if there exists a $\d\in\k^\times/\N_{l/k}(l^\times)$ such that
$\d\equiv\d_v$ mod $\N_{l_v/k_v}(l_v^\times)$ for all $v$
(where we have put $\d_v=(-1)^{b_v}$ at all real places $v$),~and
when it is the case such a Hermitian form is unique. 
Indeed,
this is certainly a necessary~con\-di\-tion and,
when it is satisfied, 
the family 
\begin{equation}
\label{locinvt}
((\e_v)_{\text{$v$ finite}},(2b_v)_{\text{$v$ real}}),
\quad 
\e_v=(\a_v,\d_v)_v \cdot (-\a_v,-1)_v^{n(n-1)/2}, 
\end{equation}
satisfies $\e_v=1$
for almost all finite places $v$ together with 
\begin{eqnarray*}
\prod\limits_{\text{$v$ finite}} \e_v
\times \prod\limits_{\text{$v$ real}} (-1)^{b_v} &=&
\prod\limits_{\text{$v$ real}} (\a_v,\d_v)_v \times
\prod\limits_{\text{$v$ real}} (-\a_v,-1)_v^{n(n-1)/2} 
\times \prod\limits_{\text{$v$ real}} (-1)^{b_v} \\
&=& \prod\limits_{\text{$v$ real}} (-1,-1)_v^{b_v}  
\times \prod\limits_{\text{$v$ real}} (-1)^{b_v} 
\end{eqnarray*}
which is equal to $1$
(thanks to the fact that $\a_v<0$ 
and $\d_v=(-1)^{b_v}$ for all real~$v$).
Thus there is a unique
quadratic form of dimension $2n$ over $k$ and discriminant 
$(-\a)^n$ with lo\-cal~in\-va\-riants \eqref{locinvt}.
One can verify that it is maximally isotropic over $l$
(as it is maximally isotropic over $l_v$~for all $v$).
It is thus the trace form of an 
$l/k$-Hermitian form of dimension $n$, as expected. 

\begin{prop}
\label{herm}
Let $\k$ be a totally real number field of degree $r$
and $l$ be a totally imaginary quadratic~ex\-ten\-sion of $k$. 
\begin{enumerate}
\item 
There is a Hermitian form $h$ of dimension~$n$
such that $\U(h)$ is~com\-pact at all~real places.
\item
There is a Hermitian form $h$ of dimension~$n$
such that $\U(h)$ is~com\-pact
at all~real places and quasi-split at all finite places
if and only if either $n$ is odd,
or $n$ and $rn/2$ are both even.
\item
Assume that $n$ is odd. 
For any finite place $w$ 
and any 
$\e\in\k_w^\times/\N_{l_w/k_w}(l_w^\times)$,
there is a~Her\-mi\-tian form $h$ as in (2) satisfying
the extra condition $\d(h\otimes k_w)=\e$.
\end{enumerate}
\end{prop}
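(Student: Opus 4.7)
The plan is to exploit the local-to-global classification of $l/k$-Hermitian forms developed earlier in this section. Such a form of dimension $n$ is uniquely determined by its family of local invariants $((\d_v)_{v\text{ fin.}},(b_v)_{v\text{ real}})$, and such a family lifts to a global form precisely when there is $\d\in k^\times/\N_{l/k}(l^\times)$ whose image in $k_v^\times/\N_{l_v/k_v}(l_v^\times)$ equals $\d_v$ at every $v$ (with $\d_v=(-1)^{b_v}$ at real $v$). Writing $l=k(\sqrt\a)$ and using the Hilbert symbol $a\mapsto(a,\a)_v$ to identify $k_v^\times/\N_{l_v/k_v}(l_v^\times)$ with $\{-1,1\}$ at non-split $v$, global class field theory for the quadratic extension $l/k$ turns the existence of $\d$ into the product identity $\prod_v(\d_v,\a_v)_v=1$.

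Part (1) is then immediate: take $b_v=0$ at every real place and $\d_v=1$ at every finite place; all local factors are trivial and the resulting Hermitian form has compact unitary group at every real place by construction. For the odd-$n$ case of (2) I would use the fact, recalled in \S\ref{SHERM}, that unitary groups in odd dimension over a local field are automatically quasi-split, so (1) already does the job. When $n$ is even, compactness at real places forces $\d_v=1$ (as $n$ is even and $b_v\in\{0,n\}$), while quasi-splitness at finite $v$ forces $\d_v=(-1)^{n/2}$. The obstruction $\prod_{v\text{ fin.}}((-1)^{n/2},\a_v)_v$ rewrites, by Hilbert's reciprocity law, as $\prod_{v\text{ real}}((-1)^{n/2},\a_v)_v$; and since $l$ is totally imaginary, $\a_v<0$ at every real $v$, so each real factor is $(-1)^{n/2}$, giving a total obstruction of $(-1)^{rn/2}$. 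This produces the criterion that $rn/2$ be even.

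For (3), with $n$ odd I would set $\d_w=\e$ and $\d_v=1$ at every other finite place, and keep the freedom $b_v\in\{0,n\}$ at real places (either value giving a compact local factor). Each real contribution to the obstruction equals $(-1)^{b_v}$ (using $\a_v<0$), while the only non-trivial finite contribution is $(\e,\a_w)_w$; since $r\geq1$, I can tune the number of real places with $b_v=n$ to force the product to be $1$, and quasi-splitness at every finite place is automatic in odd dimension. The main obstacle is the reciprocity computation in part (2): producing the exponent $rn/2$ rests on the interplay between Hilbert's reciprocity law and the totally imaginary condition on $l$, in direct analogy with the archimedean-versus-finite bookkeeping used in the proofs of Propositions \ref{oddq} and \ref{evenq}.
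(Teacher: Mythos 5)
Your proof is correct, and for parts~(1) and~(2) it runs essentially parallel to the paper's argument (local-to-global classification plus Hilbert reciprocity, with the totally imaginary hypothesis driving the sign $(-1)^{rn/2}$). Where you genuinely diverge is in part~(3). The paper fixes $b(h_v)=0$ at \emph{all} real places and absorbs the reciprocity obstruction $(\e,\a_w)_w$ by introducing an auxiliary finite place $y\neq w$ with $\a_y\neq 1$, setting $\d_y\equiv\kappa$ mod norms for a suitably chosen $\kappa\in l_y^\times$. You instead keep the finite part maximally simple ($\d_w=\e$, $\d_v=1$ at every other finite $v$) and absorb the obstruction at the archimedean places, using the freedom $b_v\in\{0,n\}$ together with the fact that $n$ is odd to make each real factor $(\d_v,\a_v)_v$ equal to $(-1)^{b_v}$; choosing $s=\#\{v\ \text{real}:b_v=n\}$ to be $0$ or $1$ then kills the obstruction since $r\ge 1$. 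Both routes work; yours has the small advantage of not requiring the auxiliary finite place $y$ (and hence not needing to observe that a non-split finite place exists), while the paper's has the small advantage of producing a form with a uniform signature $(n,0)$ at every real place. Two minor points worth stating explicitly in a write-up: in part~(2) with $n$ even, one should note (as the paper does) that the constant local invariant $\d_v=(-1)^{n/2}$ lies in $\N_{l_v/k_v}(l_v^\times)$ for almost all finite $v$, so the ``almost everywhere'' compatibility condition is met; and the identification of $k_v^\times/\N_{l_v/k_v}(l_v^\times)$ with $\{\pm1\}$ via $x\mapsto(x,\a_v)_v$ degenerates to the trivial group at split $v$, which is consistent since those places contribute nothing to the product.
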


\begin{proof}
Assertion (1) is verified by any Hermitian form $h$ of dimension~$n$
over $k$ 
such that we have $b(h_v)\in\{0,n\}$ at all real places $v$.

Assume now that $n=2m$ for some $m\>1$.
A Hermitian form~$h$~of dimension $n$
and dis\-crimi\-nant $\d$ satisfies (2) if and only if 
$b(h_v)\in\{0,n\}$ and $\d_v>0$ for all real~$v$, 
and $\d_v=(-1)^{m}$ for all finite $v$.
Such a $\d\in\k^\times/\N_{l/k}(l^\times)$ exists if and only if 
$\d_v\in\N_{l_v/k_v}(l_v^\times)$ for almost all finite $v$,
and 
\begin{equation*}
\prod\limits_{v} (\a_v,\d_v)_v=1.
\end{equation*}
The first condition is satisfied since $l_v$ is either split over $k_v$ or an 
unramified extension of $k_v$ for almost all finite $v$.
The second condition follows from 
\begin{equation*}
\prod\limits_{v} (\a_v,\d_v)_v = \prod\limits_{\text{$v$ finite}} (\a_v,-1)_v^m
= \prod\limits_{\text{$v$ real}} (\a_v,-1)_v^m
=(-1)^{rm}
\end{equation*}
thanks to the Hilbert reciprocity law and the fact that $\a_v<0$ for all
real $v$.

Assume now that $n$ is odd.
A Hermitian form $h$ of dimension~$n$ 
and dis\-crimi\-nant $\d$ satisfies~(2) 
if and~only if $b(h_v)\in\{0,n\}$~at all real places $v$,
and satisfies (3) 
if and only if $b(h_v)\in\{0,n\}$~at~all real~places $v$ and
$\d\e^{-1} \in \N_{l_w/k_w}(l_w^\times)$.
Fix a finite place $y\neq w$ and a $\kappa\in l_y^\times$.
We claim that such an $h$ exists,
with the extra conditions
\begin{itemize}
\item 
$b(h_v)=0$ for all real places $v$,
\item
$\d\in\N_{l_v/k_v}(l_v^\times)$ for all places $v\notin\{w,y\}$ 
and $\d\kappa^{-1} \in \N_{l_y/k_y}(l_y^\times)$. 
\end{itemize}
Arguing as in the case when $n$ is even,
it suffices to choose a $\kappa\notin\N_{l_y/k_y}(l_y^\times)$,
which~is possible as soon as $y$ has been chosen such that $\a_y\neq1$.
\end{proof}

\subsection{Proof of Theorem \ref{GLOBALG} in the unitary case}

We prove Theorem \ref{GLOBALG} in the case where $G$ is unitary, 
that is, 
there are a quadratic extension $E$ of $F$ 
and an $E/F$-Hermitian form $H$ over $F$ such that 
$G$ is isomorphic to $\U(H)$. 
We will~pro\-ve the following more precise theorem. 

\begin{theo}
\label{THEOFH}
Let $H$ be an $E/F$-Hermitian form such that $\U(H)$ is quasi-split.
There exist a totally real number field $\k$, a to\-tally imaginary quadratic
extension $l$ of $k$ and an $l/k$-Hermitian form $h$ such that:
\begin{enumerate}
\item 
there is a finite place $w$ of $k$ above $p$ such that 
\begin{enumerate}
\item 
one has $k_w=F$ and $l_w=E$,
\item
the Hermitian forms $h\otimes\F$ and $H$ are equivalent,
\end{enumerate}
\item
the group $\U(h\otimes\k_v)$ is
compact for all real $v$,
and quasi-split for all finite $v$. 
\end{enumerate}
\end{theo}

\begin{proof}
By Lemma \ref{arthurtotallyreal} and Remark \ref{arthurtotallyimag},
there are a totally real number field $\k$ of even degree and 
a totally imaginary quadratic~ex\-tension $l$ of $k$ 
such that $k_w=F$ and $l_w=E$
for some finite place $w$ of $k$ dividing $p$.

By~Pro\-po\-si\-tion~\ref{herm},
there exists an $l/k$-Hermitian form $h$ satisfying (2).
Moreover, 
the~Her\-mi\-tian forms $h\otimes F$ and $H$
define quasi-split unitary groups.

If $H$ has even dimension,
they are thus equivalent.
If $H$ has odd dimension,
one uses Proposition~\ref{herm}(3) with $\e=\d(H)$
to ensure that 
$h\otimes F$ and $H$~have the same discriminant:
they are thus equivalent.
\end{proof}

\begin{rema}
\label{bavardise}
In addition to Theorem \ref{THEOFH},
there is always a finite place $u\neq w$ of $k$ such that 
$\U(h\otimes k_u)$ is split: 
it suffices to choose any finite place $u\neq w$ such that
$l_u\simeq\k_u\times\k_u$.
\end{rema}

\subsection{The symplectic case}
\label{Sp}
\label{THEOFS}

We now consider the case where $G$ is a symplectic group, 
that is, 
there exists a non-degenerate symplectic form $A$
over $F$ such that $G=\Sp(A)$.
By Lemma \ref{arthurtotallyreal},
there exists a totally real number field $\k$ of even degree such that
$k_w=F$ for some finite place $w$ of $k$~dividing~$p$.
(Moreover,
when $p$ is odd,
we may further assume that there is a finite place $u$ of $k$ 
such that~$k_u\simeq\QQ_2$.)
In this case,
Theorem \ref{GLOBALG} is given by \cite{TaibiMRL16} 2.1.1.
See also \cite{Taibi}~Pro\-po\-sition 3.1.2,
where the inner form $\GG$ is
rea\-li\-zed as a {rigid inner form} of $\Sp_{2n}$ over $k$. 

\section{Congruences of automorphic forms of definite groups}
\label{sec:GlobReps}
\label{global2}

In this section, we fix a prime number $\ell$.
Let $\qlb$ be an algebraic closure of the field of $\ell$-adic integers,
$\zlb$ be its ring of integers and $\flb$ be its residue field.
We fix a field isomorphism 
\begin{equation}
\label{isoiota}
\iota:\CC\to\qlb 
\end{equation}
and a number field $k$.
We denote by $\AA=\AA_f\times\AA_\infty$ the ring of ad\`eles of $k$. 

Given a locally compact, totally disconnected group $G$,
an open subgroup $K$ of~$G$,~a
commuta\-tive ring $R$ and a smooth $R$-representation $\rho$ of $K$, 
we denote by
\begin{equation*}
\Hh_R(G,\rho)
\end{equation*}
the endo\-mor\-phism $R$-algebra of the
compact induction of $\rho$ to $G$,
called the Hecke $R$-algebra~of $G$~rela\-ti\-ve to~$\rho$.
When $\rho$ is the trivial $R$-character of $K$,
it naturally identifies with the convolution $R$-algebra 
made of $K$-bi-inva\-riant, compactly~sup\-por\-ted
$R$-valued functions on $G$,
and we denote it by $\Hh_R(G,K)$. 

Let $F$ be a $p$-adic field for some $p\neq\ell$,
$G$ be the group of $F$-rational points of a reductive group defined over $F$
and $\pi$ be an irreducible (smooth) representation of $G$ on a
$\qlb$-vector space $V$.~It
is said to be \textit{integral} if $V$ carries a $G$-stable 
$\zlb$-lattice.
Given such a lattice $L$,
the representation of $G$ on the $\flb$-vector space $L\otimes\flb$
(where $\flb$ is the residue field of $\zlb$)
is smooth and has finite length,
and its semi-simplification does not depend on the choice of
$L$ (\cite{Vigw} Theorem 1).
This semi-simplification is de\-noted $\r_\ell(\pi)$,
and called the \textit{reduction mod $\ell$} of $\pi$.
One defines similarly the reduction mod $\ell$ of an irreducible
$\qlb$-representation of a compact, open subgroup of $G$.

\subsection{}

Let $\GG$ be a connected reductive group defined over $k$.
We assume that $\GG$ is definite, that is,
the group $\GG(\AA_\infty)$ is compact.
We embed diagonally $\GG(k)$ in $\GG(\AA_f)$ and set
\begin{equation*}
\Y=\GG(k) \backslash \GG(\AA_f).
\end{equation*}
The quotient $\Y$ is compact (\cite{PlatonovRapinchuk} \textsection 5)
and hence $\Y/\K$ is finite
for any open
subgroup~$\K$ of $\GG(\AA_f)$.

We denote by $\Aa( \GG)$ the space 
of functions $\GG(k)\backslash\GG(\AA)\to\CC$
which are square integrable with respect to a Haar measure on $\GG(\AA)$.
It is endowed with the natural unitary $\CC$-re\-presentation of $\GG(\AA)$.

\subsection{}

Given an open subgroup $\K$ of $\GG(\AA_f)$, 
let $\Alg(\GG,\K)$ be~the free $\ZZ$-module of finite rank 
made of all functions $\Y\to\ZZ$ which are invariant under
right~trans\-lations by $\K$
(\cite{Vigl} 3.3). 
We consider the $\ZZ$-module
\begin{equation*}
\Alg(\GG) = \Cc^\infty (\Y,\ZZ) = \underset{\K}{\bigcup} \ \Alg(\GG,\K)
\end{equation*}
(where $\K$ ranges over all open subgroups of $\GG(\AA_f)$)
of locally constant functions $\Y\to\ZZ$,
called \emph{trivial-at-infinity algebraic automorphic forms} for $\GG$
(see Paragraph \ref{sub:alg} below).
This module
is endowed with the natural~$\ZZ$-re\-presentation of $\GG(\AA_f)$.
Given any commutative ring $\R$, we write
\begin{equation*}
\Alg_\R(\GG)=\Alg(\GG)\otimes_\ZZ\R,
\quad
\Alg_\R(\GG,\K)=\Alg(\GG,\K)\otimes_\ZZ\R.
\end{equation*}
The natural $\R$-representation of $\GG(\AA_f)$ on $\Alg_\R(\GG)$
is~ad\-mis\-sible (\cite{Vigl} 3.3.2).

If $\R$ is the field $\qlb$,
the representation of $\GG(\AA_f)$ on $\Alg_{\qlb}(\GG)$
is semi-simple
and any of~its~ir\-re\-du\-cible components has an $\Oo_\E$-struc\-ture 
for some finite extension $\E$ of $\mathbb{Q}_\ell$ in $\qlb$
(\cite{Vigl}~3.3.2).

\subsection{}
\label{sub:alg}

Let $\K$ be an open subgroup of $\GG(\AA_f)$
and $\R$ be a commutative ring. 
The Hecke $\R$-algebra of $\GG(\AA_f)$ relative to $\K$ is the convolution
$\R$-algebra
\begin{equation*}
\Hh_\R(\GG,\K) = \Hh_\R(\GG(\AA_f),\K)
\end{equation*}
made of $\K$-bi-invariant, compactly supported
functions $\GG(\AA_f)\to\R$.
It naturally acts on the~$\R$-module $\Alg_\R(\GG,\K)$.

As $\GG$ is
definite, 
there is, by \cite{Gross} Proposition 8.5,
an explicit isomorphism 
\begin{equation*}
\Alg_\CC(\GG,\K)\simeq \Aa(\GG)^{\K\times \GG(\AA_\infty)}
\end{equation*}
of $\Hh_\CC(\GG,\K)$-modules
(see \cite{Gross} (8.4) and Proposition 8.3).
In particular, there is a bijection
\begin{equation}
\label{algFAtoFA}
\Theta \leftrightarrow \Pi
\end{equation}
between
\begin{itemize}
\item 
the irreducible subrepresentations
$\Theta$ of $\Alg_\CC(\GG)$ such that 
the space $\Theta^\K$ of $\K$-fixed vectors in $\Theta$
is non-zero,
\item
the irreducible automorphic representations
$\Pi=\Pi_f \otimes \Pi_\infty$ of $\GG(\AA)=\GG(\AA_f)\times\GG(\AA_\infty)$,
that is, the irreducible subrepresentations of $\Aa(\GG)$
such that $\Pi_\infty$ is trivial and $\Pi_f^\K$ is non-zero. 
\end{itemize}

\subsection{}
\label{sub:conditions}

Let us fix an irreducible automorphic representation $\Pi$ of $ \GG(\AA)$ 
which is trivial on $ \GG(\AA_\infty)$.~By \eqref{algFAtoFA}, 
we can see $\Pi$ as an irreducible subrepresentation
of $\Alg_{\CC}(\GG)$,
thus as an irreducible factor of $ \Alg_\qlb (\GG)$
\textit{via} the isomorphism $\iota$ fixed in \eqref{isoiota}.

We fix two finite places $w$ and $u$  of $k$ not dividing $\ell$ and a finite
set $ \SS $ of finite places of $ k $ such that: 
\begin{enumerate}
\item
the set $\SS$ contains $w,u$ and all finite places above $\ell$, 
\item
for any finite place $v \notin \SS$,
the group $\GG$ is unramified over $k_v$, 
and the local component~$ \Pi_v $ is un\-ra\-mi\-fied with respect to some
hyperspecial maximal compact subgroup $\K_v$ of $\GG(k_v)$. 
\end{enumerate}

Any finite place $ v \notin \SS $ thus defines a character
\begin{equation}
\label{defSat}
\chi_v : \Hh_\qlb(\GG(k_v), \K_v) \to \qlb
\end{equation}
which we call the \textit{Satake parameter} of $\Pi_v$.

Recall that 
$\Pi$ is admissible and has an $\Oo_\E$-structure
for some finite extension $\E$ of $\QQ_\ell$ in $\qlb$.
Let us write $\Pi=\Pi_{(\ell)} \otimes \Pi^{(\ell)}$,
where 
$\Pi_{(\ell)}$ is the~ten\-sor pro\-duct of all $\Pi_v$ such that $v$ 
divides~$\ell$~and 
$\Pi^{(\ell)}$ is the restricted ten\-sor pro\-duct of all $\Pi_v$ such that $v$ is
finite and does not divides $\ell$.
By~\cite{Vigl} A.3,
both $\Pi_{(\ell)} $ and $\Pi^{(\ell)}$
have an $\Oo_\E$-structure.~By
applying \cite{Vigl} A.4 to $\Pi^{(\ell)}$,
we get that each~$\Pi_v$, 
for $ v \notin \SS $ finite,
has an $\Oo_\E$-structure.
Fixing such an $\Oo_\E$-structure,
the~$\Oo_\E$-algebra~$\Hh_{\Oo_\E}(\GG(k_v),\K_v)$ acts on it 
through the character $\chi_v$, 
which thus has values in $\Oo_\E$. 
It follows that the 
restriction of~$\chi_v$ to $ \Hh_{\zlb}(\GG(k_v),\K_v) $ 
has values in $ \zlb$. 

\subsection{}
\label{sub:defs}

We now make the following assumptions on the representation $\Pi$ of
Paragraph \ref{sub:conditions}:
\begin{itemize}
\item 
the local component $\Pi_w$ is cuspidal,
and is compactly induced from an irreducible represen\-ta\-tion 
of some compact
open subgroup $\K_w$ of $\GG(k_w)$, 
\item 
the local component $\Pi_u$ is cuspidal,
and is compactly induced from an irreducible represen\-tation $\eta$ of
some compact
{mod centre}
open subgroup $\K_u$ of $\GG(k_u)$.
\end{itemize}
For any finite place $v\in\SS$ such that $v\notin \{u,w\}$,
we fix a compact open subgroup $\K_v$ of $\GG(k_v)$~such that
$\Pi_v$ has a non-zero $\K_v$-invariant vector.
Recall that,
for any finite place $v\notin \SS$,
we have fixed a hyperspecial
maximal compact open subgroup $\K_v$ of $\GG(k_v)$ in
Paragraph \ref{sub:conditions}.
We define 
\begin{equation*}
\K = \prod \limits_{\text{$ v $ finite}} \K_v.
\end{equation*}
This is an 
open subgroup of $\GG(\AA_f)$.

Given an irreducible representation $\kappa$ of $\K_w$, we 
define an irreducible representation $ \La = \La(\kappa)$ of $\K $ by
\begin{equation*}
\La = \bigotimes \limits_{\text{$ v $ finite}} \La_v
\end{equation*}
with
$ \La_w = \kappa $, $ \La_u = \eta $ and
$\La_v$ is the trivial character of $\K_v$ for $ v \notin\{ w, u\} $.

We denote by $ \Alg_\qlb(\GG,\La) $ the subrepresentation of
$\Alg_\qlb(\GG)$ generated
by its $ \La $-isotypic~com\-po\-nent, 
that is,
the subrepresentation formed by the irreducible factors $ \Theta $ such
that
\begin{itemize}
\item
the local component $ \Theta_w $ contains $ \kappa$,
\item
the local component $ \Theta_u $ contains $ \eta$,
\item
the local component $ \Theta_v $ has a non-zero $\K_v $-invariant vector
for all finite $ v \notin\{ w, u\} $.
\end{itemize}
This amounts to considering the space
\begin{equation*}
\Mm =\Mm(\kappa) = \Hom_\K\left(\La, \Alg_\qlb(\GG)\right)
\end{equation*}
seen as a module over the endomorphism $\qlb$-algebra 
$ \Hh_\qlb(\GG,\La) = \Hh_\qlb(\GG(\AA_f),\La) $ 
of the com\-pact induction of $ \La$
from $\K$ to $ \GG(\AA_f) $.
We have an $\Hh_\qlb(\GG,\La) $-module decomposition 
\begin{equation}
\label{decoM}
\Mm = \bigoplus \limits_{\Theta} \Hom_\K(\La, \Theta)
\end{equation}
where $ \Theta $ ranges over the irreducible factors of $ \Alg_\qlb(\GG,\La)$, 
and each $ \Hom_\K(\La, \Theta) $ is of finite~di\-men\-sion as $ \Theta $ is
admissible.
By admissibility again,
the number of $ \Theta $ contributing to the direct sum of \eqref{decoM}
is finite.

Denote by $\X$ the set of finite places of $k$.
For any non-empty subset $\T$ of $\X$
and any irreducible factor $\Theta$ contributing to the right hand side of
\eqref{decoM}, we write
\begin{equation*}
\K_\T = \prod_{v\in\T} \K_v,
\quad
\La_\T = \bigotimes_{v\in\T} \La_v,
\quad
\Theta_\T = \bigotimes_{v\in\T} \Theta_v.
\end{equation*}
We thus have
$ \K = \K_\SS \times \K_{\X \backslash \SS}$,
$\La = \La_\SS \otimes \La_{\X \backslash \SS}$ and
$\Theta$ is isomorphic to $\Theta_\SS\otimes\Theta_{\X \backslash \SS}$. 
Accordingly, we have an isomorphism of $\qlb$-algebras
\begin{equation}
\label{isoh}
\Hh_\qlb(\GG,\La) \simeq
\Hh_\qlb\left(\GG(\AA_\SS), \K_\SS\right)
\otimes \Hh_\qlb\left(\GG(\AA_{\X \backslash \SS}), \La_{\X \backslash \SS}\right)
\end{equation} 
where $\AA_\SS$ and $\AA_{\X \backslash \SS}$ have their obvious
meaning,
and an isomorphism of $ \Hh_\qlb(\La) $-modules 
\begin{equation*}
\Hom_\K(\La, \Theta) \simeq
\Hom_{\K_{\SS}}\left(\La_{\SS}, \Theta_{\SS}\right)
\otimes 
(\Theta_{\X \backslash \SS}) ^{\K_{\X \backslash \SS}} 
\end{equation*}
\textit{via} \eqref{isoh}.
The factor $(\Theta_{\X \backslash \SS})^{\K_{\X \backslash \SS}}$
has dimension $1$ over $ \qlb $ and
$\Hh_\qlb(\GG(\AA_{\X \backslash \SS}), \K_{\X \backslash \SS})$
acts on this
line \textit{via} a character denoted $\chi_\SS(\Theta)$.
Let $d_\SS(\Theta)$ be the dimension of 
$ \Hom_{\K_{\SS}}(\La_{\SS},\Theta_{\SS})$. 
Denoting~by $ \Mm_\SS $ the restriction of $ \Mm $ to
$\Hh_\qlb(\GG(\AA_{\X \backslash \SS}),\K_{\X \backslash \SS}) $,
we therefore have an isomorphism
\begin{equation}
\label{decchiMS2}
\Mm_\SS \simeq \bigoplus_\Theta d_\SS(\Theta) \cdot \chi_\SS(\Theta)
\end{equation}
of $ \Hh_\qlb(\GG(\AA_{\X \backslash \SS}),\K_{\X \backslash \SS}) $-modules.

\subsection{}
\label{Montespan}

Assume now that $ \eta$ is integral. 
Fix a $\K_u$-stable~$\zlb$-lattice $\L_\eta $ of $ \eta$ 
with semi-simple reduction
(by \cite{DatNuTempered} Lemma 6.11).
Since $\K_w$ is compact,
$\kappa$ is integral.
We also fix a $\K_w$-stable $ \zlb$-lattice~$\L_\kappa$~of $ \kappa$
with semi-simple reduction.
If $v$ is a finite place~diffe\-rent from $u,w$,
let $\L_v=\zlb$ be the natural lattice of $\La_v=\qlb$.
Tensoring these $\La_v$ altogether,~we
obtain a $\K$-stable~$\zlb$-lat\-tice $\L_\La $ of $ \La$.
Set
\begin{equation*}
\Mm ^{\circ} = \Hom_\K\left(\L_{\La}, \Alg_\zlb(\GG)\right).
\end{equation*}
It is a module over $\Hh_\zlb(\GG(\AA_f), \L_{\La}) $.
Set $ \overline{\La} = \L_{\La}\otimes_{\zlb}\flb$. 
By \cite{Vigl} Lemme 3.7.3
(which we can apply since $\K$ satisfies
\cite{Vigl} (3.5.1) by \cite{Vigl} Lemme 3.8)
the $\zlb$-module
$ \Mm ^{\circ} $ is a $\zlb$-lattice of $ \Mm$ 
and 
\begin{equation*}
\Mm ^{\circ} \otimes_{\zlb} \flb \simeq
\Hom_\K\left(\overline{\La}, \Alg_\flb(\GG)\right). 
\end{equation*}
We denote the left hand side by $\overline{\Mm}$, 
and we continue to see it as a module over
$\Hh_{\zlb}(\GG(\AA_f),\L_{\La}) $.
Note~that $\overline{\Mm}$ is semi-simple
and depends only on $\r_\ell(\kappa)$,
the reduction mod $\ell$ of $\kappa$.

\subsection{}

We now assume that $\GG(k_w)$ is isomorphic to a
special orthogonal, unitary or symplectic~group.
We do not assume that it is quasi-split,
but we assume that it is not isomorphic to
the~split~spe\-cial~orthogonal group $\SO_2(k_w)\simeq k_w^\times$.
Equivalently,
we assume that it has compact centre~(\cite{MiyauchiStevens} \S 4.2).
Consequently, any cus\-pi\-dal irreducible $\qlb$-representation
of $\GG(k_w)$ is integral.
In the rest of this section, 
we will prove the following theorem.

\begin{theo}
\label{kharevigneras}
Assume that $\GG(k_w)$ is isomorphic to a
special orthogonal, unitary or symplec\-tic group with compact centre,
and that $w$ does not divide $2$.
Let $\Pi$ be an irreducible automorphic representation of $\GG(\AA)$ such that 
\begin{itemize}
\item 
$\Pi_\infty$ is trivial,
\item 
$\Pi_w$ is cuspidal (and integral), 
\item
$\Pi_u$ is integral and compactly induced from
a compact mod centre, open subgroup of $\GG(k_u)$.
\end{itemize}
Let $\pi'$ be an (integral) irreducible cuspidal $\qlb$-representation of
$\GG(k_w)$ such that
\begin{equation}
\label{pipip}
\r_\ell(\Pi_w) \leq \r_\ell(\pi').
\end{equation}
There is an irreducible automorphic representation $\Pi'$ of $\GG(\AA)$
such that
\begin{enumerate}
\item 
the Archimedean component $\Pi'_\infty$ is trivial,
\item
the local component
$\Pi'_{w}$ is isomorphic to $\pi'$,
\item 
the local components
$\Pi'_{u}$ and $\Pi^{\phantom{'}}_u$ are isomorphic,
\item
for any finite place $v\notin\SS$,
the local component $\Pi'_{v}$ is $\K^{\phantom{'}}_v$-unramified,
with Satake parameter 
$\chi'_v : \Hh_{\zlb}(\GG(k_v),\K_v) \to \zlb$,
and 
$\chi^{\phantom{'}}_v$, $\chi'_v$
are congruent mod the~ma\-xi\-mal ideal  of $\zlb$. 
\end{enumerate}
\end{theo}


\begin{rema}
\label{SaintAignan}
Since $\Pi$ is integral, 
it automatically follows from \cite{Vigl} A.3, A.4 that the repre\-sentation
$\Pi_u$
is integral. 
However, 
for clarity, 
we added the integrality assumption in the hypo\-theses of Theorem 
\ref{kharevigneras}. 
\end{rema}

\begin{proof}
We follow the argument of Khare \cite{Khare} and 
Vign\'eras \cite{Vigl}.
We start with following~lem\-ma,
which we will prove in Paragraph \ref{ashes}.

\begin{lemm}
\label{ashesdaurade}
Let $p$ be a prime number different from $2$,
let $F$ be a $p$-adic field and $G$~be~a~spe\-cial orthogonal, 
unitary or symplectic group over $F$.
{Suppose that $G$ has compact centre,
that~is,
$G$ is not isomorphic to the split special orthogonal group
$\SO_2(F)\simeq F^\times$.}
Let $\pi$ and $\pi'$ be (integral) cuspidal $\qlb$-re\-pre\-sentations of $G$ 
such that
\begin{equation*}
\r_\ell(\pi)\<\r_\ell(\pi').
\end{equation*}
There are a compact open subgroup $J$ of $G$ and irreducible 
$\qlb$-representations $\tau$ and $\tau'$ of $J$ such that
$\pi$ is isomorphic to the compact induction of $\tau$ to $G$ and
$\pi'$ is isomorphic to the compact~in\-duction of $\tau$ to $G'$
{and $\r_\ell(\tau)\<\r_\ell(\tau')$.}
\end{lemm}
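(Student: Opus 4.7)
The plan is to follow the Khare--Vign\'eras strategy \cite{Khare},~\cite{Vigl}~3.5, adapted to classical groups \emph{via} the type theory of \cite{ShaunINVENT08}. Since $p$ is odd and $G$ has compact centre, Stevens' construction realizes any cuspidal irreducible $\qlb$-representation of $G$ as the compact induction of an irreducible smooth representation of a compact open subgroup of $G$; moreover, compact induction from such a subgroup commutes with reduction mod~$\ell$, since any stable lattice in the type induces to a stable lattice in the cuspidal.

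First I would fix such a type $(J,\tau')$ for $\pi'$, \ie an isomorphism $\pi'\simeq\ind_J^G\tau'$, together with a $J$-stable $\zlb$-lattice in $\tau'$ having semi-simple reduction $\bar\tau'$ (using \cite{DatNuTempered} Lemma~6.11 as in Paragraph~\ref{Montespan}). Then $\r_\ell(\pi')\simeq\ind_J^G\bar\tau'$. The hypothesis $\r_\ell(\pi)\<\r_\ell(\pi')$ implies that every irreducible $\flb$-component of $\r_\ell(\pi)$ appears in $\ind_J^G\bar\tau'$; by Frobenius reciprocity, each such component is already compactly induced from an irreducible $\flb$-subrepresentation of $\bar\tau'$ sitting on the subgroup $J$.

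Next I would invoke the mod $\ell$ Bushnell--Kutzko--Stevens type theory for classical groups (developed by the authors and by Kurinczuk--Stevens) to lift this matching back to characteristic zero. Since the type of a cuspidal irreducible $\qlb$-representation is uniquely determined up to $G$-conjugacy by its inertial class, and since the cuspidal components of $\r_\ell(\pi)$ share a common type support inside $(J,\bar\tau')$, a $G$-conjugate of the type of~$\pi$ is supported on the same compact open subgroup~$J$. After replacing $\pi$ by this conjugate, we write $\pi\simeq\ind_J^G\tau$ for some irreducible $\qlb$-representation~$\tau$ of~$J$, and choose a $J$-stable $\zlb$-lattice in $\tau$ with semi-simple reduction~$\bar\tau$. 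Exactness of $\ind_J^G$ on semi-simple $\flb$-modules of $J$, combined with the bijection between cuspidal components and their types, turns the multiplicity inequality $\r_\ell(\pi)\<\r_\ell(\pi')$ into $\bar\tau\<\bar\tau'$, that is $\r_\ell(\tau)\<\r_\ell(\tau')$.

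The main obstacle will be the second step: matching the compact open subgroups underlying the Stevens types of $\pi$ and $\pi'$. This is precisely where one needs the $\ell$-modular theory of semi-simple characters and covers for classical $p$-adic groups, and it is also the place where the assumption $p\neq2$ enters crucially, since \cite{ShaunINVENT08} is unavailable otherwise. Once the common~$J$ is secured, the remaining multiplicity argument is formal.
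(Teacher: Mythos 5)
Your general plan (pass to types, reduce mod $\ell$, lift back) is the right one, and you correctly flag the crux: producing a \emph{common} compact open subgroup $J$. But the mechanism you propose for that crux is not adequate, and this is where your proof genuinely breaks.

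First, the representations $\pi$ and $\pi'$ need not lie in the same inertial class, so the assertion ``the type of a cuspidal irreducible $\qlb$-representation is uniquely determined up to $G$-conjugacy by its inertial class'' cannot be applied to them directly; moreover the irreducible components of $\r_\ell(\pi)$ are $\flb$-representations, and there is no established $\ell$-modular type theory for classical groups that attaches ``type support'' to a cuspidal $\flb$-representation in a way that classifies it up to conjugacy. Second, from the inclusion $\r_\ell(\pi)\<\big(\ind_J^G\bar\tau'\big)^{\rm ss}$ and Frobenius reciprocity one only gets nonvanishing of $\Hom_J(\bar\tau',\,\cdot\,|_J)$ for subquotients; you do not get for free that components of $\r_\ell(\pi)$ are compactly induced from $J$. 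Third, the final claim that the multiplicity inequality on inductions ``turns into'' $\r_\ell(\tau)\<\r_\ell(\tau')$ is not formal: $\ind_J^G$ is not injective on multiplicities in this context, and one needs a specific mechanism to go back from a comparison of cuspidal representations of $G$ to a comparison of representations of $J$.

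The paper's actual argument is much more concrete and fills precisely these three gaps. Starting from a Stevens cuspidal type $(J,\kappa\otimes\xi)$ for $\pi$, it shows that the underlying skew semisimple character $\theta$ of $H^1$ already occurs in $\pi'$ (this is a clean lift, because $H^1$ is a pro-$p$ group). It then considers, among all skew semisimple strata and characters occurring in $\pi'$ with parahoric contained in $\P^\circ(\Lambda_E)$, those with \emph{minimal} parahoric; Stevens' and Miyauchi--Stevens' theory forces such a parahoric to be \emph{maximal} (being attached to a cuspidal type of $\pi'$), hence equal to $\P^\circ(\Lambda_E)$. This gives a cuspidal type $(J,\kappa\otimes\xi')$ for $\pi'$ with the \emph{same} $J$ and the \emph{same} beta-extension $\kappa$. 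Finally, the comparison $\r_\ell(\xi)\<\r_\ell(\xi')$ is obtained not by a formal multiplicity argument but by applying the functor $\Hom_{J^1}(\kappa,-)$ together with the key $\ell$-modular inputs from Kurinczuk--Stevens (irreducibility of $\r_\ell(\kappa)$ and their Corollary~8.5 on the level-zero part). None of this maximality-of-parahoric argument nor the role of $\kappa$ appears in your sketch, so I would not regard the proposal as a proof; it is rather a description of what needs to be proved.
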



\begin{rema}
It is known (\cite{ShaunINVENT08}) that any cuspidal $\qlb$-representation 
of $G$ is isomorphic to the compact induction of an (irreducible)
representation of some compact open subgroup of $G$.
The point here is that one can choose the same compact open subgroup
for $\pi$ and $\pi'$.
\end{rema}

Applying Lemma \ref{ashesdaurade} to the group $\GG(k_w)$ and the
cuspidal $\qlb$-representations $\Pi_w$ and $\pi'$, 
we obtain a~com\-pact open subgroup $\K_w$ of $\GG(k_w)$
and irreducible representations $\tau$ and $\tau'$ of $\K_w$
such that $\Pi_w$ is isomorphic to the compact induction of $\tau$ to
$\GG(k_w)$ and
$\pi'$ is isomorphic to the compact induction of $\tau'$ to $\GG(k_w)$. 
Since $\K_w$ is compact,
$\tau$ and $\tau'$ are integral.

Let $\n$ be an irreducible representation of
a compact mod centre,
open subgroup $\K_u$ of $\GG(k_u)$ such~that the
compact induction of $\n$ to $\GG(k_u)$ is isomorphic to $\Pi_u$.
Since $\Pi_u$ is integral,~the~re\-presentation $\eta$ is integral. 
Thus $\Pi$~sa\-tisfies the conditions of Paragraphs \ref{sub:defs}
and \ref{Montespan}.

As in Paragraph \ref{sub:defs},
we define $\Lambda=\Lambda(\tau)$ and $\Mm=\Mm(\tau)$. 
Associated with a choice of $\K_w$-stable $\zlb$-lattice of $\kappa$
with semi-simple reduction, 
there are $\Mm^{\circ}$ and $\overline{\Mm}$.
Similarly,
replacing $\tau$ by~$\tau'$,~we
define $\Lambda'$, $\Mm'$, $\Mm^{\prime \circ}$ and $\overline{\Mm}{}'$.
Recall that $\overline\Mm$ and $ \overline{\Mm}{}'$ are semi-simple. 
The key point is that the space $\overline{\Mm}$ is non-zero and 
contained in $\overline\Mm{}'$,
since we have $\r_\ell(\tau)\<\r_\ell(\tau')$ by 
Lemma \ref{ashesdaurade}.

The character $\chi_\SS(\Pi)$ of
$\Hh_\zlb(\GG(\AA_{\X \backslash \SS}), \K_{\X \backslash \SS})$
defined by $\Pi$ occurs in $\Mm_\SS^\circ$.
By reduction,
we get a~cha\-rac\-ter $\overline{\chi}_\SS$ of
$\Hh_\flb(\GG(\AA_{\X \backslash \SS}), \K_{\X \backslash \SS})$
occurring in $\overline\Mm_\SS$,
and therefore in $ \overline\Mm{}'_\SS$. 

By Deligne-Serre's lemma (\cite{DS} Lemma 6.11),
there is a character $\chi'_\SS$ of
$\Hh_\zlb(\GG(\AA_{\X \backslash \SS}), \K_{\X \backslash \SS})$ occurring in
$\Mm_\SS^{\prime \circ} $ such that its reduction
equals $\overline{\chi}_\SS$. 

Therefore,
there is an irreducible factor $\Pi'$ of $\Alg_\qlb(\GG)$
contributing to $\Mm_\SS^{\prime}$ such that~$\chi^{\phantom{'}}_\SS(\Pi')$
and $\chi'_\SS$ coincide on
$\Hh_\qlb(\GG(\AA_{\X \backslash \SS}), \K_{\X \backslash \SS})$.
Such a $\Pi'$ satisfies the conditions of the theorem. 
\end{proof}

\subsection{} 
\label{ashes}

In remains to prove Lemma \ref{ashesdaurade}. 

\begin{proof}
According to \cite{ShaunINVENT08} Theorem 7.14
(and \cite{MiyauchiStevens} Appendix A),
there are a compact open~sub\-group $J$ of $G$ and an irreducible 
$\qlb$-representation $\tau$ of $J$ such that
$\pi$ is isomorphic to the~com\-pact~induction of $\tau$ to $G$.
More precisely, the pair $(J,\tau)$ can be chosen among
\textit{cuspidal types} of $G$
in the sense of \cite{MiyauchiStevens} Appendix A.
It then has the following properties:
\begin{itemize}
\item
There is a normal pro-$p$-subgroup $J^1$ of $J$ such that
$J/J^1$ is isomorphic to the group of~ra\-tional points of a reductive group
$\mathscr{G}$ (whose neutral component is denoted $\mathscr{G}^\circ$)
defined over the residue field of $F$. 
\item
The representation $\tau$ decomposes as $\kappa\otimes\xi$
where $\kappa$ is a representation of $J$ whose restriction to $J^1$ is 
irreducible 
and $\xi$ is an irreducible representation of $J$ whose
restriction to $J^1$ is trivial. 
\item
More precisely,
$\kappa$ is a \textit{beta-extension}
(\cite{ShaunINVENT08} \S4.1)
of a \textit{skew semisim\-ple~character} $\t$
(\cite{ShaunINVENT08} \S3.1)
defined with respect to a \textit{skew semisimple stratum} $[\La,\b]$
(\cite{ShaunINVENT08} \S2.1)
and $\xi$ is the inflation~of~a~re\-presentation of $J/J^1$
whose restriction to the ra\-tional points of $\mathscr{G}^\circ$ is cus\-pidal. 
\item
The centre of the centralizer $G_E$ of $E=F[\b]$ in $G$ is compact,
and the parahoric subgroup $\P^\circ(\La_E)$ of $G_E$
as\-so\-ciated with $[\La,\b]$ 
(see \cite{ShaunINVENT08} \S2.1)
is a maximal parahoric subgroup of $G_E$.
\end{itemize}

\begin{lemm}
The character $\t$ occurs in $\pi'$.
\end{lemm}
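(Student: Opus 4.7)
The plan is to pass through reductions mod $\ell$: occurrence of the simple character $\t$ in an integral representation of $G$ is detected by occurrence of its reduction $\bar\t$ in the reduction mod $\ell$, so the hypothesis $\r_\ell(\pi)\<\r_\ell(\pi')$ will transport the occurrence of $\t$ from $\pi$ to $\pi'$.

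First I would check that $\t$ occurs in $\pi|_{H^1(\b,\La)}$. By construction $\kappa$ is a $\b$-extension of $\t$, so $\kappa|_{H^1(\b,\La)}$ is a multiple of $\t$; the factor $\xi$ is trivial on $J^1\supset H^1(\b,\La)$, so $\tau=\kappa\otimes\xi$ restricts on $H^1(\b,\La)$ to a multiple of $\t$; and since $\pi$ is compactly induced from $\tau$, Frobenius reciprocity gives $\pi^{(H^1,\t)}\neq 0$, where $\s^{(H^1,\t)}$ denotes the $\t$-isotypic subspace of a representation~$\s$.

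The central step is the identity
$$
\dim_{\qlb}\s^{(H^1,\t)}=\dim_{\flb}\r_\ell(\s)^{(H^1,\bar\t)}
$$
for every integral smooth $\qlb$-representation $\s$ of $G$. Since $H^1(\b,\La)$ is pro-$p$ with $p\neq\ell$, the character $\t$ factors through a finite $p$-group quotient and takes values in $\mu_{p^\infty}(\qlb)\subset\zlb^\times$. The idempotent
$$
e_\t=\frac{1}{[H^1:\ker\t]}\sum_{h\in H^1/\ker\t}\t(h)^{-1}h
$$
is therefore defined over $\zlb$, and reduces mod $\ell$ to the analogous idempotent $e_{\bar\t}$ attached to the character $\bar\t:H^1(\b,\La)\to\flb^\times$. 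For a $G$-stable $\zlb$-lattice $L\subset\s$ one has
$$
(e_\t L)\otimes_{\zlb}\flb=e_{\bar\t}(L\otimes_{\zlb}\flb)=(L\otimes_{\zlb}\flb)^{(H^1,\bar\t)};
$$
since $[H^1:\ker\t]$ is a $p$-power and hence invertible in $\flb$, Maschke's theorem makes the functor $V\mapsto V^{(H^1,\bar\t)}$ exact on smooth $\flb$-representations of $H^1(\b,\La)$, so the right-hand dimension is invariant under semi-simplification, giving the displayed identity.

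Applied successively to $\s=\pi$ and $\s=\pi'$, the identity combined with the assumption $\r_\ell(\pi)\<\r_\ell(\pi')$ yields the chain $\pi^{(H^1,\t)}\neq 0 \Rightarrow \r_\ell(\pi)^{(H^1,\bar\t)}\neq 0 \Rightarrow \r_\ell(\pi')^{(H^1,\bar\t)}\neq 0 \Rightarrow \pi'^{(H^1,\t)}\neq 0$, which is the desired conclusion. The main obstacle I anticipate is the careful bookkeeping between a lattice in $\s$, its reduction mod $\ell$, and the semi-simplification $\r_\ell(\s)$; once the integrality of $e_\t$ is secured, the rest reduces to a standard exactness statement for representations of a finite $p$-group over $\flb$ with $\ell\neq p$.
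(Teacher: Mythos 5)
Your argument is correct and follows essentially the same path as the paper: both exploit that $H^1$ is pro-$p$ with $p\neq\ell$, so that taking the $\t$-isotypic component commutes with reduction mod $\ell$, and then use the hypothesis $\r_\ell(\pi)\<\r_\ell(\pi')$ to transport the occurrence of $\t$. The paper phrases this by choosing an irreducible summand $V$ of $\pi'|_{H^1}$ whose reduction contains $\r_\ell(\t)$ and observing $V\simeq\t$, whereas you make the integral idempotent $e_\t$ and the resulting dimension identity explicit; the mechanism is the same.
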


\begin{proof}
By definition,
$\t$ is a character of an open pro-$p$-subgroup 
$H^1=H^1(\La,\b)$ of $G$.
Since~$\t$ occurs in the restriction of $\pi$ to $H^1$,
its reduction mod $\ell$ occurs in $\r_\ell(\pi')|_{H^1}$.
Let $V$ be an~ir\-redu\-cible summand of $\pi'|_{H^1}$ such that $\r_\ell(V)$
contains $\r_\ell(\t)$.
Since $H^1$ is a pro-$p$-group,
$V$~is~iso\-morphic to $\t$.
\end{proof}

Now let $\Cc$ denote the set of pairs $([\La',\b],\t')$
made of a skew semisimple stratum $[\La',\b]$
and a skew semisimple character
$\t'\in\Cc(\La',\b)$ occurring in $\pi'$ such that 
\begin{equation*}
\P^\circ(\La'_E) \subseteq \P^\circ(\La^{\phantom{'}}_E)
\end{equation*}
and $\Cc_{\rm min}$ denote the subset of $\Cc$ made of all 
$([\La',\b],\t')$ such that $\P^\circ(\La'_E)$ is 
minimal among all para\-ho\-ric subgroups of $G_E$ occurring this way.

Let us prove that $\Cc_{\rm min}=\Cc$.
Let $([\La',\b],\t')\in\Cc_{\rm min}$.
Then \cite{ShaunINVENT08} \S7.2
(in particular Lemma~7.4) and
\cite{MiyauchiStevens} Appendix A imply that
$\pi'$ contains a cuspidal type $(J',\kappa'\otimes\xi')$
where
$J'=J(\La',\b)$~for some skew semisimple stratum $[\La',\b]$
and $\kappa'$ is any beta-ex\-tension of $\t'$.
By definition of a~cuspi\-dal type, 
$\P^\circ(\La'_E)$ is a~maximal pa\-rahoric subgroup of $G_E$.
It is thus equal to 
$\P^\circ(\La^{\phantom{'}}_E)$.

It follows that $([\La,\b],\t)\in\Cc_{\rm min}$.
We thus may choose $([\La',\b],\t')=([\La,\b],\t)$
(hence $J'=J$) and $\kappa'=\kappa$ in the paragraph above.
Thus $\pi'$ contains~a~cus\-pi\-dal~type $(J,\kappa\otimes\xi')$.
It follows from \cite{ShaunINVENT08} Corollary 6.19 that
the compact induction of 
$\tau'=\kappa\otimes\xi'$ from $J$ to $G$
is isomorphic to $\pi'$.

The representation $\kappa$ is integral (since the group $J$ is compact)
and its reduction mod $\ell$ is~ir\-reducible
(by \cite{KSCRELLE20} Remark 6.3). 
Applying the functor ${\rm Hom}_{J^1}(\kappa,-)$ from representations of $G$
to representations of $J$ which are trivial on $J^1$,
which is compatible to reduction mod $\ell$,
we deduce from \cite{KSCRELLE20} Corollary 8.5
that $\r_\ell(\xi)\<\r_\ell(\xi')$,
thus $\r_\ell(\tau)\<\r_\ell(\tau')$.
\end{proof}

\section{Globalizing representations} 
\label{secglobalisation}
\label{global3}


In this section, 
we fix a $p$-adic field $F$
and a quasi-split special~ortho\-go\-nal, unitary or symplec\-tic group $G$ 
over $F$.
Let $k$, $w$ and $\GG$ be as in Theorem \ref{GLOBALG},
and $\jmath:\GG(F)\simeq G$ be an~iso\-morphism of locally compact groups
which we use to identify $\GG(F)$ with $G$.

Let $\ell$~denote a prime number different from $p$, 
and fix a field isomorphism $\iota$ as in \eqref{isoiota}. 
Let $u$~be a finite place of~$k$ different from $w$,
not dividing $\ell$.

In Paragraph \ref{GLOBALPIPRIMEpara} only,
the prime number $p$ will be assumed to be odd. 

\subsection{} 
\label{premiereglobalisation}

The next proposition is the first step towards Theorem \ref{GLOBALPIPRIME}.
(See also Paragraph \ref{par13intro}.)

\begin{prop}
\label{GLOBALPI}
Let $\pi$ be a unitary cuspidal irreducible complex representation of $G$,
and let $\rho$~be~a uni\-tary
cuspidal irreducible complex representation~of $\GG(k_u)$.
There~is an irreducible au\-tomor\-phic repre\-sen\-tation $\Pi$ of $\GG(\AA)$ 
such that
\begin{enumerate}
\item 
the local component $\Pi_u$ is isomorphic to $\rho$,
\item 
the local component $\Pi_w$ is isomorphic to $\pi$,
\item 
the local component $\Pi_v$ is the trivial character of $\GG(k_v)$ for any 
real place $v$ of $k$.
\end{enumerate}
\end{prop}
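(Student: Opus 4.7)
The strategy, following Khare--Vign\'eras \cite{Khare, Vigl}, is to realize $\Pi$ as an irreducible subrepresentation of $\Alg_\CC(\GG)$ \emph{via} a Poincar\'e series construction. First, I would invoke cuspidal type theory to write $\pi \simeq \ind_{\K_w}^{\GG(k_w)} \tau$ for a compact open subgroup $\K_w$ of $\GG(k_w)$ and an irreducible representation $\tau$ of $\K_w$, and $\rho \simeq \ind_{\K_u}^{\GG(k_u)} \eta$ for a compact mod centre open subgroup $\K_u$ of $\GG(k_u)$ and an irreducible representation $\eta$ of $\K_u$. The crucial type-theoretic property is that an irreducible smooth representation of $\GG(k_w)$ (respectively $\GG(k_u)$) whose restriction to $\K_w$ (respectively $\K_u$) contains $\tau$ (respectively $\eta$) must be isomorphic to $\pi$ (respectively $\rho$). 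At each other finite place $v$, choose a compact open subgroup $\K_v$ of $\GG(k_v)$, hyperspecial for almost all such~$v$, and set $\K = \prod_v \K_v$ and $\La = \tau \otimes \eta \otimes (\text{trivial at other places})$ as a representation of $\K$. By the bijection~\eqref{algFAtoFA}, it is enough to show that
\[
\Mm := \Hom_\K\bigl(\La, \Alg_\CC(\GG)\bigr)
\]
is non-zero: any irreducible subrepresentation $\Theta$ of the $\La$-isotypic part of $\Alg_\CC(\GG)$ will correspond to an irreducible automorphic $\Pi$ with $\Pi_\infty$ trivial, and the type property will then force $\Pi_w \simeq \pi$ and $\Pi_u \simeq \rho$.

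Next, I would produce an explicit element of $\Mm$ by averaging. Fix matrix coefficients $\phi_w$ of $\tau$ and $\phi_u$ of $\eta$ with $\phi_w(e) \neq 0$, $\phi_u(e) \neq 0$, with $\phi_w$ supported on $\K_w$ and $\phi_u$ compactly supported modulo the centre of $\GG(k_u)$. At every other finite place take a normalized characteristic function of $\K_v$. Their tensor product $f$ is a function on $\GG(\AA_f)$, compactly supported modulo the centre, that transforms by $\La$ under left translation by $\K$. Averaging the translates $g \mapsto f(\gamma g)$ over $\gamma \in \GG(k)$ modulo an appropriate central subgroup yields a locally constant function $P$ on $\Y$ which, by construction, represents an element of $\Mm$.

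The main obstacle is to ensure that $P$ is non-zero. I would evaluate $P$ at the identity: the term $\gamma = 1$ contributes $f(1) \neq 0$, so one must kill the contributions of non-central $\gamma \in \GG(k)$. Here one uses that $\GG(k)$ is discrete in $\GG(\AA)$ and that $\GG(\AA_\infty)$ is compact: only finitely many $\gamma$ can meet the support of $f$, and by shrinking the $\K_v$ at the auxiliary finite places (while keeping them hyperspecial at almost all places) these finitely many non-central $\gamma$ can all be forced outside ${\rm supp}(f)$. A delicate point is that one must do so without disturbing the level at $w$ and $u$, and without losing control of the central-character compatibility along $\GG(k)$; this is standard thanks to the discreteness of $Z(\GG)(k)$ in $Z(\GG)(\AA)$ and the finiteness of $\Y/\K$. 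One then obtains $P \neq 0$ in $\Mm$, and any irreducible subrepresentation of the $\La$-isotypic part of $\Alg_\CC(\GG)$ yields the desired $\Pi$.
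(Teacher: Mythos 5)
Your overall strategy — realize $\Pi$ inside $\Alg_\CC(\GG)$ by a Poincar\'e series argument and then read off the local components — is the same as the paper's, which follows Henniart's appendix in \cite{HeGL3}. However, you introduce two modifications that either reduce generality or leave a genuine gap.

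First, you unnecessarily hypothesize that $\pi$ and $\rho$ are compactly induced from a cuspidal type $(\K_w,\tau)$, resp.\ $(\K_u,\eta)$, and then identify $\Pi_w$ and $\Pi_u$ by the type property. The paper avoids this entirely: it takes $f_w$ and $f_u$ to be arbitrary matrix coefficients of $\pi$ and $\rho$ that are non-zero at $1$, and then uses Schur orthogonality of cuspidal matrix coefficients to identify $\Pi_w$ and $\Pi_u$ from the non-vanishing of the integrals $\int f_v(g^{-1})\Pi_v(g)\,dg$. This matters: Proposition~\ref{GLOBALPI} is stated and proved without any assumption on $p$, and the auxiliary place $u$ may lie over~$2$ (indeed the symplectic case later forces $u\mid 2$), whereas the cuspidal-type machinery of Stevens and Miyauchi--Stevens requires odd residue characteristic. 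Your proof would therefore only establish a special case of the proposition.

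Second, you gloss over the construction of the global central character, saying it is ``standard thanks to the discreteness of $Z(\GG)(k)$ in $Z(\GG)(\AA)$.'' This is a genuine step that needs an argument. For the sum $\sum_{\gamma\in\zz(k)\backslash\GG(k)}f(\gamma g)$ to be well defined, the global function $f=\prod_v f_v$ must transform on $\zz(\AA)$ by a character trivial on $\zz(k)$; yet you have prescribed incompatible local data $(\omega_\rho,\omega_\pi,\mathbf{1})$ at $u$, $w$, and $\infty$, which a priori need not patch. The paper's Lemma~\ref{lemmeprelim} produces a unitary automorphic character $\Omega$ of $\zz(\AA)/\zz(k)$ with these prescribed local components by observing that $\zz(k_u)\times\zz(k_w)\times\zz(\AA_\infty)$ intersects $\zz(k)$ trivially inside $\zz(\AA)$ (hence embeds as a locally compact subgroup of $\zz(\AA)/\zz(k)$) and then invoking Pontryagin duality to extend. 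Only after that does the paper choose the remaining $f_v$'s to transform by $\Omega_v$. Without this step your Poincar\'e series is not well defined, so this is a gap you would need to fill.
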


\begin{rema}
When the centre of $G$ is compact, 
any cuspidal irreducible representation $\pi$~of $G$ is unitarizable.
The only case when $G$ has a non-compact 
centre is when it is isomorphic to the split special orthogonal group
$\SO_2(F)\simeq\F^\times$
(see \cite{MiyauchiStevens} 4.2).
\end{rema}

\begin{proof} 
Let $\zz$ be the centre of $\GG$.
We start the proof by the following lemma.
  
\begin{lemm}
\label{lemmeprelim}
There is a unitary automorphic character 
$\Omega:\zz(\AA)/\zz(k)\to\CC^\times$
such that
\begin{enumerate}
\item
the local component $\Omega_u$ is equal to the central character $\omega_\rho$
of $\rho$,
\item 
the local component $\Omega_w$ is equal to the central character
$\omega_{\pi}$ of $\pi$,
\item
the local component $\Omega_v$ is the trivial character of $\zz(k_v)$ for any 
real place $v$ of $k$.
\end{enumerate}
\end{lemm}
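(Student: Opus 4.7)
My plan is to apply Pontryagin duality to the compact abelian group $\zz(\AA)/\zz(k)$, where $\zz$ denotes the centre of $\GG$. First I observe that $\zz$ is an anisotropic commutative $k$-group in all cases: it is trivial for $\SO_{2n+1}$, equal to $\mu_2$ for $\Sp_{2n}$ and $\SO_{2n}$, and equal to the norm-one torus $\mathrm{U}(1)_{l/k}$ in the unitary case (anisotropic because $l$ is totally imaginary over the totally real field $k$). Consequently $\zz(\AA)/\zz(k)$ is compact, and every continuous character of this group is automatically unitary.

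Consider the subgroup $H = \zz(k_u) \times \zz(k_w) \times \prod_{v\mid\infty}\zz(k_v)$ of $\zz(\AA)$, embedded with identity components at all remaining places. Define a continuous character $\chi$ on $H$ which restricts to $\omega_\rho$ on $\zz(k_u)$, to $\omega_\pi$ on $\zz(k_w)$, and to the trivial character on $\prod_{v\mid\infty}\zz(k_v)$. A direct check shows that the intersection $H \cap \zz(k)$, taken inside $\zz(\AA)$ with $\zz(k)$ embedded diagonally, is trivial: an element of this intersection would be a $k$-point of $\zz$ that is trivial at every finite place of $k$ outside $\{u, w\}$, but since $k$ has infinitely many finite places and the localization map $\zz(k) \hookrightarrow \zz(k_v)$ is injective for any such $v$, the element must be trivial. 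Hence $\chi$ extends to a well-defined continuous character on the subgroup $H \cdot \zz(k)$ of $\zz(\AA)$ by setting it trivially on $\zz(k)$.

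Pontryagin duality for locally compact abelian groups then provides a continuous (hence unitary) extension of this character from $H \cdot \zz(k)$ to all of $\zz(\AA)$. By construction, the extension $\Omega$ is trivial on $\zz(k)$ and thus descends to a unitary character of $\zz(\AA)/\zz(k)$ with the prescribed local components at $u$, $w$, and all real places.

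The main technical obstacle is verifying that $H \cdot \zz(k)$ is closed in $\zz(\AA)$, so that the extension theorem applies. This is immediate in the orthogonal and symplectic cases, where $H$ is compact (as $\zz$ is finite). In the unitary case when $\zz(k_u)$ or $\zz(k_w)$ is non-compact (which occurs when $l$ splits at the corresponding place), one may instead reformulate the problem in terms of Hecke characters of $l$ with prescribed local components and apply the standard Pontryagin extension theorem on the id\`ele class group $\AA_l^\times/l^\times$, using the exact sequence relating $\zz$ to $\mathrm{Res}_{l/k}\GG_m$.
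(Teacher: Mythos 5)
Your proof takes the same approach as the paper's: introduce the subgroup $\U = \zz(k_u)\times\zz(k_w)\times\zz(\AA_\infty)$ of $\zz(\AA)$, check that $\U\cap\zz(k)$ is trivial, and invoke Pontryagin duality to extend the prescribed character to $\zz(\AA)/\zz(k)$. Your verification of the trivial intersection (via injectivity of $\zz(k)\hookrightarrow\zz(k_v)$ at any third finite place) and your case-by-case identification of $\zz$ are correct and add welcome detail to what the paper states in a single line.

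The issue you flag as a ``technical obstacle'' is, however, a genuine gap that is not fully resolved in your write-up. In the unitary case $\zz$ is the norm-one torus of $l/k$ and $\zz(\AA)/\zz(k)$ is compact; and when $l$ splits at $u$ --- which is precisely how $u$ is chosen in the application (Remark~\ref{bavardise}, Lemma~\ref{placeu}) --- one has $\zz(k_u)\simeq k_u^\times$, so $\U$ is non-compact. A non-compact group cannot be carried homeomorphically onto a closed subgroup of a compact group, so one cannot simply identify $\U$ with a closed subgroup of $\zz(\AA)/\zz(k)$ and cite the standard Pontryagin extension theorem for closed subgroups; equivalently, $\U\zz(k)$ need not be closed in $\zz(\AA)$ (the $\{u\}$-units of $\zz(k)$ can be infinite). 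There is no such issue at $w$, where $l_w/k_w$ is a field extension, nor at the archimedean places. Your proposed repair --- using the Hilbert-90 sequence $1\to\AA_k^\times/k^\times\to\AA_l^\times/l^\times\to\zz(\AA)/\zz(k)\to 1$ to reduce to constructing a Hecke character of $l$ with prescribed local components trivial on $\AA_k^\times$ --- is a sensible direction, but as stated it merely replaces one non-compact ambient group by another (the id\`ele class group $\AA_l^\times/l^\times$ is itself non-compact), so the non-compactness problem has not been disposed of; one would still need, for instance, to split off the $\RR_{>0}$-factor of $\AA_l^\times/l^\times$ and check compatibility of the prescribed value $\omega_\rho(\varpi_u)$ with the resulting compact quotient. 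Until that step is supplied, this part of the argument should be regarded as a sketch rather than a proof.
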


\begin{proof}
Let $\U$ denote the subgroup
$\zz(k_{u}) \times \zz(k_{w}) \times \zz(\AA_\infty)$ of $\zz(\AA)$.
The intersection $\U\cap \zz(k)$ is trivial, 
thus $\U$ identifies with a locally compact subgroup of $\zz(\AA)/\zz(k)$.
By Pontryagin duality,
any unitary character of $\U$ extends to $\zz(\AA)/\zz(k)$.
(Note that $\omega_\rho$ and $\omega_{\pi}$ are unitary.)
\end{proof}

We now follow the proof of \cite{HeGL3}.
Let $\Omega:\zz(\AA)/\zz(k)\to\CC^\times$
be a unitary automorphic character as in Lemma \ref{lemmeprelim}.
Let $y$ be a finite place different from $u$ and $w$.

Let us choose coefficients $f_u$ and $f_w$ of $\rho$ and $\pi$,
respectively, which are non-zero at $1$.

For~all real places $v$ of $k$,
let $f_v$ be the constant function equal to $1$ on $\GG(k_v)$.
As this group is~com\-pact,
$f_v$ is smooth and compactly supported.

For all finite places $v\neq y$ such that
$\GG$ is unramified over $k_v$ and
$\Omega_v$ is unramified,
let $f_v$ be the complex function on $\GG(k_v)$ supported on
$\zz(k_v)\K_v$ such that
$f_v(zg) =\Omega_v(z)$ for all $z \in \zz(k_v)$ and all $g$
in a fixed hyperspecial maximal compact subgroup $\K_v$ of $\GG(k_v)$. 

For any other place $x$,
we choose a smooth complex function $f_x$ on $\GG(k_x)$,
non-zero at $1$,~com\-pactly supported modulo $\zz(k_x)$
with restriction to this later
group equal to $\Omega_x$.

We let $f$ be the product of all these $f_v$.
It is smooth and compactly supported on $\GG(\AA)$.~We
may and will assume that
\begin{itemize}
\item
the support of $f_y$ is small enough so that
\begin{equation*}
f(g^{-1}) f(\g g) = 0
\quad
\text{for all $g\in\GG(\AA)$, $\g \in\GG(k)$ such that $\g\notin\zz(k)$,}
\end{equation*}
\item
and $f_v(g)=\overline{f_v(g^{-1})}$ 
for all places $v$ of $k$ and all $g\in\GG(k_v)$. 
\end{itemize}

We construct as in \cite{HeGL3} the Poincar\'e series
\[
P f(g)=\underset{\gamma\in \zz(k) \backslash \GG(k) }{\sum}
f(\gamma g), \qquad \text{ for } g\in \GG(\AA).
\]
We are in a particular case of the proof of
\cite{HeGL3} Appendice 1,
so in particular this is well defined, non-zero,
square-integrable and even cuspidal. 
There is thus an irreducible
automorphic~repre\-sentation $\Pi$ of $\GG(\AA)$,
with central character $\Omega$,
such that, 
for each place $v$ of $k$, one has
\[
\int_{\zz(k_v) \backslash \GG(k_v)}f_v(g^{-1})\Pi_v(g) \ {\rm d}g \neq 0
\]
where ${\rm d}g $ denotes a Haar measure on $\zz(k_v) \backslash \GG(k_v)$.
In particular,
the local components $\Pi_u$ and $\Pi_w$ are isomorphic to
$\rho$ and $\pi$, 
respectively.
At any real place $v$, the representation
$\Pi_v$ contains a vector which is $\GG(k_v)$-invariant,
so $\Pi_v$ is trivial. 
\end{proof}

\subsection{} 
\label{GLOBALPIPRIMEpara}

We now assume that $p\neq2$.

\begin{theo}
\label{GLOBALPIPRIME}
Let $\pi_1$, $\pi_2$ be integral cuspidal irreducible $\qlb$-representations of 
$G$ such that
\begin{equation*}
\r_\ell(\pi_1) \< \r_\ell(\pi_2).
\end{equation*}
Let $\rho$~be~a unitary cuspidal irreducible complex representation of $\GG(k_u)$
which is compactly~indu\-ced from~some compact mod centre,
open subgroup of $\GG(k_u)$. 
{Assume that $G$ is not the split~spe\-cial orthogonal group 
$\SO_2(F)\simeq F^\times$.}
There are irreducible~automor\-phic~re\-pre\-sentations $\Pi_1$
and $\Pi_2$ of $\GG(\AA)$ such that
\begin{enumerate}
\item 
$\Pi_{1,u}$ and $\Pi_{2,u}$ are both isomorphic to $\rho$,
\item 
$\Pi_{1,w}\otimes_{\CC}\qlb$ is isomorphic to $\pi_1$
and $\Pi_{1,w}\otimes_{\CC}\qlb$ is isomorphic to $\pi_2$,
\item 
$\Pi_{1,v}$ and $\Pi_{2,v}$ are trivial for any real place $v$,
\item
there is a finite set $\SS$ of places of $k$, containing all real places, 
such that for all $v\notin\SS$~:
\begin{enumerate}
\item
the local components $\Pi_{1,v}$ and $\Pi_{2,v}$ are unramified
with respect to some hyperspecial maximal compact subgroup $\K_v$ of 
$\GG(k_v)$, 
\item
the restrictions of the Satake parameters 
of $\Pi_{1,v}\otimes_{\CC}\qlb$ and $\Pi_{2,v}\otimes_{\CC}\qlb$
to the Hecke $\zlb$-algebra $\Hh_{\zlb}(\GG(k_v),\K_v)$
are congruent mod the maximal ideal $\m$ of $\zlb$.
\end{enumerate}
\end{enumerate}
\end{theo}

\begin{rema} 
The assumption on $G$ implies that the centre of $G$ is compact,
thus any~cus\-pi\-dal irreducible $\qlb$-representation of $G$ is integral.
\end{rema}

\begin{proof}
First, 
let us apply Proposition \ref{GLOBALPI} with 
$\pi=\pi_1\otimes_{\qlb}\CC$. 
(Since the centre of $G$ is~com\-pact,~the central~cha\-rac\-ter of 
$\pi$ has finite order, thus $\pi$ is unitarizable.) 
We 
obtain an irreducible automor\-phic representation $\Pi_1$
of~$\GG(\AA)$ such that
\begin{enumerate}
\item 
the local component $\Pi_{1,u}$ is isomorphic to $\rho$,
\item 
the local component $\Pi_{1,w} \otimes_{\CC}\qlb$ is isomorphic to 
$\pi_1$, 
\item 
the local component $\Pi_{1,v}$ is the trivial character of 
$\GG(k_v)$ for any real place $v$.  
\end{enumerate}
We then choose for $\SS$ a set of finite places of $k$ as in Paragraph 
\ref{sub:conditions},
that is, 
$\SS$ contains $u$,~$w$~and all places dividing $\ell$, 
and, for any finite place $ v \notin \SS$,
the local component $ \Pi_{1,v}$ is unramified with res\-pect to some
hyperspecial maximal compact subgroup $\K_v$ of $\GG(k_v)$. 
For such $v$,
this defines~a $\zlb$-character $\chi_{1,v}$ of $\Hh_{\zlb}(\GG(k_v),\K_v)$.

We now apply Theorem \ref{kharevigneras} with $\pi'=\pi_2$.
The conditions of Paragraph 3.5 are automatical\-ly satisfied for~$\Pi_{1,w}$
thanks to \cite{ShaunINVENT08}.
We get an irreducible automorphic representation $\Pi_2$ of the group $\GG(\AA)$,
trivial at infinity, such that
\begin{enumerate}
\item
the local component $\Pi_{2,w}\otimes_{\CC}\qlb$ is isomorphic to $\pi_2$,
\item 
the local component $\Pi_{2,u}$ is isomorphic to $\rho$,
\item
for all finite places $v\notin\SS$, 
the local component $\Pi_{2,v}\otimes_{\CC}\qlb$ is $\K_v$-unramified
with associated $\zlb$-character
$\chi_{2,v} : \Hh_{\zlb}(\GG(k_v),\K_v) \to \zlb$,
and $\chi_{1,v}$ and $\chi_{2,v}$ are congruent mod $\m$. 
\end{enumerate}
This proves Theorem \ref{GLOBALPIPRIME}.
\end{proof}

\section{Global transfer}
\label{transfer5} 

\subsection{Quasi-split classical groups}
\label{quasisplitg}

Let $\k$ be either a $p$-adic field for some prime number $p$,
or a real Archimedean local field, or~a totally real number field.
We will~con\-si\-der~the following families of quasi-split reductive groups 
over $\k$:
\begin{enumerate}
\item
For $n\>1$,
the (split) symplectic group $\Sp_{2n}$ defined as $\Sp(f)$,
where $f$ is the alternating form on $k^{2n}\times k^{2n}$ defined by
\begin{equation}
\label{deff}
f(x_1,\dots,x_{2n},y_1,\dots,y_{2n}) = x_1y_{2n}-x_{2n}y_1 +
\dots+x_{n}y_{n+1}- x_{n+1}y_{n}.
\end{equation}
\item 
For $n\>1$ and $\a\in\k^{\times}$,
the (split) special orthogonal group $\SO_{2n+1}$ defined as 
$\SO(q)$,~where $q$ is the qua\-dra\-tic form on $\k^{2n+1}$
of discriminant $(-1)^n\a$ defined by 
\begin{equation}
\label{defqodd}
q(x_1,\dots,x_{2n+1}) = x_1x_2+\dots+x_{2n-1}x_{2n} + \a x_{2n+1}^2.
\end{equation} 
\item
For $n\>1$ and $\a\in\k^{\times}$, 
the special orthogonal group $\SO_{2n}^{\a}$ defined as
$\SO(q)$, where $q$ is~the quadra\-tic form on $\k^{2n}$ of discriminant 
$(-1)^n\a$ defined by 
\begin{equation}
\label{defqeven}
q(x_1,\dots,x_{2n}) = x_1x_2+\dots+x_{2n-3}x_{2n-2} + x_{2n-1}^2-\a x_{2n}^2.
\end{equation} 
\item
For $n\>1$ and $\a\in\k^{\times}$, 
the unitary group $\U_{n}^{\a}$ defined as $\U(h)$,
where $h$ is the $l/k$-Hermitian form on $l^{n}$
of discriminant $(-1)^{n(n-1)/2}$ defined by 
\begin{equation}
\label{defh}
h(x_1,\dots,x_{n}) =
x_1^cx_n^{\phantom{c}}-x_2^cx_{n-1}^{\phantom{c}}+\dots
+(-1)^{n-1}x_{n}^cx_{1}^{\phantom{c}} 
\end{equation}
where $l$ is the $k$-algebra $\k[\X]/(\X^2-\a)$
and $c$ is the non-trivial automorphism of $l/k$.
If $\a\in\k^{\times2}$,
the $k$-group $\U_{n}^{\a}$ is thus isomorphic to $\GL_n$.

\end{enumerate}

In the even orthogonal and unitary cases,
the image of $\a$ in $k^\times/k^{\times2}$
will still be denoted $\a$.

\subsection{The dual group}
\label{par41}

In this paragraph, $\k$ is either a $p$-adic field
or~a~to\-tally real num\-ber field
and $\GG^*$ is one of the quasi-split special orthogonal,
unitary or symplectic $k$-groups of \ref{quasisplitg}.
We define its dual group 
\begin{equation*}
\GGH =
\left\{
\begin{array}{ll}
\SO_{2n+1}(\CC) & \text{if $\GG^*=\Sp_{2n}$}, \\
\Sp_{2n}(\CC) & \text{if $\GG^*=\SO_{2n+1}$}, \\
\SO_{2n}(\CC) & \text{if $\GG^*=\SO_{2n}^{\a}$}, \\
\GL_n(\CC) & \text{if $\GG^*=\U_n^\a$}.
\end{array}
\right. 
\end{equation*}
In the even orthogonal case, the groups 
$\SO_{2n}(\CC)\subseteq{\rm O}_{2n}(\CC)$
are defined with respect to the~sym\-metric bilinear form
$\langle\cdot,\cdot\rangle$ on $\CC^{2n}$ given by
\begin{equation*}
\langle\textsf{e}_i,\textsf{e}_j\rangle=
\left\{
\begin{array}{ll}
0 & \text{if $i+j\neq 2n+1$,} \\
1 & \text{otherwise,}
\end{array}
\right.
\end{equation*}
where $(\textsf{e}_1,\dots,\textsf{e}_{2n})$ is the canonical basis of $\CC^{2n}$.

\subsection{The local Langlands correspondence}
\label{defstd}
\label{defstdU}

In this paragraph, $\k$ is a $p$-adic field
and $\GG^*$ is either the general linear $k$-group $\GL_n$ for~some $n\>1$
(whose dual group is $\GL_n(\CC)$)
or one of the quasi-split classical $k$-groups 
of~\ref{quasisplitg}. 
We~denote by $\W_k$ the Weil group of $\qpb$ over $k$, and 
de\-fi\-ne the semi-direct product
$\GGL=\GGH\rtimes\W_k$, where
\begin{itemize}
\item[$\bullet$]
the action of $\W_k$ on $\GGH$ is trivial when $\GG^*$ is split
(that is, when $\GG^*$ is general linear,~sym\-plec\-tic, odd~or\-tho\-gonal, 
even orthogonal with $\a=1$ or 
unitary with $\a=1$),
\item[$\bullet$]
when $\GG^*$ is even orthogonal and $\a\neq1$, 
and if $l$ denotes the quadratic extension of $k$ in
$\qpb$ generated by a square root of $\a$,
the action~of $\W_k$ on $\GGH$ factors through $\Gal(l/k)$,
the generator $c$ of which
acts by conjugacy by the element $w\in{\rm O}_{2n}(\CC)$ fixing
$\textsf{e}_1,\dots,\textsf{e}_{n-1},\textsf{e}_{n+2},\dots,\textsf{e}_{2n}$
and~ex\-chan\-ging $\textsf{e}_n$ and $\textsf{e}_{n+1}$
(thus $\GGH\rtimes\Gal(l/k)$ identifies with ${\rm O}_{2n}(\CC)$),
\item[$\bullet$]
when $\GG^*$ is unitary and $\a\neq1$, 
and if $l$ denotes the quadratic extension of $k$ in
$\qpb$ generated by a square root of $\a$,
the action~of $\W_k$ on $\GGH$ factors through the group $\Gal(l/k)$ 
whose generator $c$ acts by 
\begin{equation*}
g \mapsto g^* = \J \cdot {}^{\rm t}g^{-1} \cdot \J^{-1}
\end{equation*}
where ${}^{\rm t}g$ denotes the transpose of $g\in\GL_n(\CC)$
and $\J$ is the antidiagonal matrix in $\GL_n(\CC)$~de\-fi\-ned~by
$\J_{i,j}=0$ if $i+j\neq n+1$ and $\J_{i,n+1-i}=(-1)^{i-1}$.
\end{itemize}

Let $\WD_k=\W_k \times \SL_2(\CC)$ denote the Weil-Deligne group of $k$.
A \textit{(local) Langlands parameter} for $\GG(k)$ is a group homomorphism 
\begin{equation*}
\h : \WD_{k} \to \GGH\rtimes\W_k
\end{equation*}
such that
\begin{itemize}
\item 
its restriction to $\W_{k}$ is smooth,
\item 
its restriction to $\SL_2(\CC)$ is algebraic,
\item
the projection of $\h(\W_{k})$ onto $\GGH$ is~made of semi-simple elements, and 
\item
the projection of $\h(w,x)$ onto $\W_k$
is equal to $w$ for all $(w,x)\in\WD_{k}$.
\end{itemize}
When $\GG^*$ is split,
this is the same as a morphism $\WD_{k} \to \GGH$
satisfying the first three points.
In the even orthogonal case with $\a\neq1$, 
this is the same as a morphism $\WD_{k} \to {\rm O}_{2n}(\CC)$
satisfying the first three points and whose determinant is the quadratic character
\begin{equation*}
x\mapsto(\a,x)
\end{equation*}
of $\k^\times$,
which can be seen as a character of $\W_k$ \textit{via} the Artin reciprocity map
of local class field theory. 
We say a local Langlands parameter $\h$ is \textit{bounded}
if $\h(\W_k)$ is relatively compact in $\GGH$.

Let
\begin{itemize}
\item 
$\Phi(\GG^*,k)$ be the set of $\GGH$-conjugacy classes
of local Langlands parameters for $\GG^*$ over $k$,
\item
$\Pi(\GG^*(k))$ be the set of isomorphism classes of irreducible
representations of $\GG^*(k)$.
\end{itemize}

When $\GG^*$ is the general linear group $\GL_n$,
the local Langlands~corres\-pon\-dence 
(\cite{HT,Henniart})
is~a bijection from $\Pi(\GL_n(k))$ to $\Phi(\GL_n,k)$.

When $\GG^*$ is classical, 
the local Langlands correspondence~(\cite{Arthur} Theorem 2.2.1,
\cite{Mok} Theorems 2.5.1, 3.2.1, 
see also \cite{AGRT17} Theorems 3.2, 3.6 and~Re\-marks 3.3, 3.7) 
defines
\begin{enumerate}
\item 
(symplectic, odd orthogonal and unitary cases)
a partition
\begin{equation}
\label{LLCv}
\Pi(\GG^*(k)) = \coprod \limits_{\h\in\Phi(\GG^*,k)} \Pi_\h(\GG^*(k))
\end{equation}
into non-empty finite sets $\Pi_\h(\GG^*(k))$
if $\GG^*$ is symplectic, odd special ortho\-go\-nal or unitary, 
\item
(even orthogonal case)
a partition
\begin{equation}
\label{LLCveven}
\Pi(\SO_{2n}^\a(k))
= \coprod \limits_{\h\in\Phi(\SO_{2n}^\a,k)/{\rm O}_{2n}(\CC)}
\Pi_\h(\SO_{2n}^\a(k))
\end{equation}
where each $\Pi_\h(\SO_{2n}^\a(k))$
is non-empty, finite and stable under ${\rm O}^\a_{2n}(k)$-conjugacy.
\end{enumerate}

In each case, we have the following properties:
\begin{itemize}
\item[$\bullet$]
$\Pi_\h(\GG^*(k))$ contains a tempered repre\-sen\-tation 
if and only if $\h$ is bounded.~When
this~is~the case,
all representations in $\Pi_\h(\GG^*(k))$ are tempered.
(See for instance \cite{Arthur} Theorem 1.5.1 for~sym\-plec\-tic and 
special orthogonal groups,
and \cite{KMSW} Theorem 1.6.1 for unitary groups.)
\item[$\bullet$]
$\Pi_\h(\GG^*(k))$ contains a discrete series representation
if and only if $\h$ is bounded and
the~quo\-tient of the centralizer 
of the ima\-ge of $\h$ in $\GGH$
by ${\bf Z}(\GGH)^{\W_k}$ is finite.
When
this is~the case,~all
representations in $\Pi_\h(\GG^*(k))$ are discrete series representations.
(See for instance \cite{BinXuMAMA17} Theorem 2.2 for~sym\-plec\-tic and 
special orthogonal groups,
and \cite{KMSW} Theorem 1.6.1 for unitary groups.)
\end{itemize}

\subsection{The local transfer}
\label{defloctrans}

In this paragraph, $\k$ is a $p$-adic field
and $\GG^*$ is one of the quasi-split classical $k$-groups~of~\ref{quasisplitg}.
If $\GG^*$ is symplectic or special orthogonal, 
there is a morphism ${\rm Std}:\GGH\to\GL_N(\CC)$ with
\begin{equation}
\label{defN}
N = N(\GG^*) = \left\{
\begin{array}{ll}
2n& \text{if $\GG^*=\SO_{2n+1}$ or $\GG^*=\SO_{2n}^\a$}, \\
2n+1 & \text{if $\GG^*=\Sp_{2n}$},
\end{array}
\right.
\end{equation} 
given by the natural inclusion.
We extend it to a morphism $\Std:\GGH\rtimes\W_k\to\GL_N(\CC)$ 
as follows:
\begin{itemize}
\item[$\bullet$] 
${\rm Std}$ is trivial on $\W_k$ when $\GG^*$ is split, 
\item[$\bullet$]
when $\GG^*$ is even orthogonal and $\a\neq1$, 
${\rm Std}$ is trivial on $\W_l$ and
${\rm Std}(c)=w\in{\rm O}_{2n}(\CC)$,
thus ${\rm Std}$ factors through
\begin{equation*}
\SO_{2n}(\CC) \rtimes\W_k
\twoheadrightarrow \SO_{2n}(\CC) \rtimes\Gal(l/k) 
\simeq {\rm O}_{2n}(\CC) \subseteq \GL_{2n}(\CC)
\end{equation*}
(see also \cite{AGRT17} 3.2). 
\end{itemize}

In the unitary case ($\GG^*=\U_n^\a$),
we need to introduce the $k$-group $\GL_n^{\a}$,
the restriction of $\GL^{\phantom{\a}}_n$ with respect to $l/k$. 
Its dual group is~$\GL_n(\CC)\times\GL_n(\CC)$, 
and we define the semi-direct product
\begin{equation*}
{}^L\GL_n^{\a} =
(\GL^{\phantom{\a}}_n(\CC)\times\GL^{\phantom{\a}}_n(\CC)) \rtimes \W_k
\end{equation*} 
where 
\begin{itemize}
\item[$\bullet$] 
the action of $\W_k$ is trivial when $l/k$ is split,
\item[$\bullet$] 
otherwise, 
the action of $\W_k$ factors through $\Gal(l/k)$ and 
$c$ acts by $(g,h)\mapsto(h,g)$.
\end{itemize}
It will be convenient to set
\begin{equation}
\label{defNU}
N = N(\U_n^\a) = n. 
\end{equation}
Let $\Std$ be the morphism
$\GGH\rtimes\W_k\to
(\GL^{\phantom{\a}}_N(\CC)\times\GL^{\phantom{\a}}_N(\CC)) \rtimes \W_k$
defined by $g\rtimes w\mapsto(g,g^*)\rtimes w$.

Given an irreducible representation $\pi\in\Pi(\GG^*(k))$,
let $\h\in\Phi(\GG^*,k)$ be a Langlands parameter such that
$\pi\in\Pi_\h(\GG^*(k))$.
(In the even orthogonal case,
$\h$ is determined up to ${\rm O}_{2n}(\CC)$-conjugacy only.)

If $\GG^*$ is symplectic or special ortho\-go\-nal, then, 
composing with $\Std$,
we get a local Langlands parameter
$\phi=\Std\circ\h\in\Phi(\GL_N,k)$,
uniquely determined up to $\GL_N(\CC)$-conjugacy.

If $\GG^*$ is unitary, then, 
composing with $\Std$,
we obtain a~Lang\-lands parameter
\begin{equation*}
\Std\circ\h : \WD_k \to 
(\GL^{\phantom{\a}}_N(\CC)\times\GL^{\phantom{\a}}_N(\CC)) \rtimes \W_k.
\end{equation*}
\begin{itemize}
\item[$\bullet$] 
If $l$ is non-split,
its restriction to $\WD_l$ 
has the form $(w,x)\mapsto(\phi(w,x),\phi(w,x)^*)\rtimes w$
for~a~lo\-cal Langlands parameter
$\phi\in\Phi(\GL_N,l)$,
uniquely determined up to $\GL_N(\CC)$-conjugacy. 
\item[$\bullet$] 
If $l$ is split,
it is of the form $(w,x)\mapsto(\phi(w,x),\phi(w,x)^*)\rtimes w$
for a local Langlands parameter
$\phi\in\Phi(\GL_N,k)$,
uniquely determined up to $\GL_N(\CC)$-conjugacy. 
\end{itemize}

\begin{defi}
\label{BCq}
\label{BCu}
The \textit{local transfer} of $\pi$,
denoted $\BC(\pi)$,
is the isomorphism class of irreduci\-ble~represen\-ta\-tions 
associa\-ted with $\phi$ through the local Langlands correspondence.
It is
\begin{enumerate}
\item 
a class of representations of $\GL_N(k)$ if $\GG^*$ is symplectic or special
ortho\-go\-nal,
\item
a class of representations of $\GL_N(l)$ if $\GG^*$ is unitary,
\end{enumerate}  
which is uniquely~de\-termined by the isomorphism class of $\pi$.
\end{defi}

\begin{rema}
\label{remaBCUSPLIT}
If $\GG^*$ is unitary and $l$ is split over $k$,
and if we fix an isomorphism of $k$-algebras $l\simeq k\times k$,
which~we use to identify $\U^\a_n(k)$ with $\GL_n(k)$
and $\GL_N(l)$ with $\GL_N(k)\times\GL_N(k)$,~then
\begin{equation}
\label{BCUSPLIT}
\BC(\pi)=\pi\otimes\pi^\vee
\end{equation}
(where $\pi^\vee$ is the contragredient of $\pi$).
This does not depend on the choice of $l\simeq k\times k$.
Indeed,
making the~other choice twists~the~iso\-mor\-phism
$\U^\a_n(k)\simeq\GL_n(k)$ by $g\mapsto g^*$
(see \eqref{isoUGL} and the explanation thereafter)
and the~isomor\-phism
$\GL_N(l)\simeq\GL_N(k)\times\GL_N(k)$
by $(g,h)\mapsto(h,g)$,
which gives \eqref{BCUSPLIT}
again since $g\mapsto\pi(g^*)$~is isomorphic to $\pi^\vee$.
\end{rema}

In Section \ref{ULT},
we will describe explicitly the local transfer for 
unramified representations when $\GG^*$ is unramified over $k$,
and will describe its congruence properties.

\subsection{Arthur parameters in the symplectic and orthogonal cases}
\label{cappodanno}
\label{sarbacane}

In this paragraph, $\k$ is a~to\-tally real num\-ber field
and $\GG^*$ is symplectic or quasi-split special orthogonal.
We write $\AA$ for the ring of ad\`eles of $k$ and $N=N(\GG^*)$
(see \eqref{defN}).

\begin{defi}
\label{defAparam}
A \textit{{discrete} global Arthur parameter} (for $\GG^*$) is a formal sum
\begin{equation}
\label{defpsi}
\psi = \Pi_1[d_1] \oplus\dots\oplus \Pi_r[d_r]
\end{equation}
for some integer $r\>1$,
where,
for each $i\in\{1,\dots,r\}$,
$d_i$ is a positive integer 
and $\Pi_i$ is a self-dual cuspidal automorphic irreducible representation
of $\GL_{N_i}(\AA)$ for some $N_i\>1$, such that
\begin{enumerate}
\item 
one has $N_1d_1+\dots+N_rd_r=N$,
\item
if $r\>2$ and $\Pi_i\simeq\Pi_j$ for some $i\neq j$ in $\{1,\dots,r\}$, 
then $d_i\neq d_j$, 
\item
the self-dual representation
$\Pi_i$ has the same parity as $\GGH$ if $d_i$ is odd,
and has the opposi\-te~parity if $d_i$ is even, 
where the parity of $\Pi_i$ is defined to be orthogonal if 
$L(s,\Pi_i,{\rm Sym}^2)$ has a pole at $s=1$,
and symplectic if 
$L(s,\Pi_i,\wedge^2)$ has a pole at $s=1$,
\item
the character $\omega_{\Pi_1}^{d_1}\dots\omega_{\Pi_r}^{d_r}$ is
trivial if $\GG^*=\Sp_{2n}$ or $\GG^*=\SO_{2n+1}$,
and is equal to the~qua\-dra\-tic cha\-racter 
\begin{equation*}
\chi_{\a} : x \mapsto \prod\limits_{v} (\a_v,x_v)_v \in \{-1,1\}
\end{equation*}
of $\AA^\times/k ^\times$ 
if $\GG^*=\SO^{\a}_{2n}$,
where $\omega_{\Pi_i}$ is the central character of $\Pi_i$.
\end{enumerate}
\end{defi}

A discrete global Arthur parameter
$\Sigma_1[e_1] \oplus\dots\oplus \Sigma_{s}[e_{s}]$
is said to be \textit{equivalent to} \eqref{defpsi} if~we have
$s=r$ and there is a permutation
$\varepsilon\in\mathfrak{S}_r$ 
such that $e_{i}=d_{\varepsilon(i)}$ and
$\Sigma_i\simeq\Pi_{\varepsilon(i)}$ for each $i$.~Let
\begin{equation*}
\widetilde{\Psi}_{\rm 2}(\GG^*)
\end{equation*}
be the set of equivalence classes of
discrete global Arthur parameters for $\GG^*$.

Associated with a discrete global Arthur parameter
$\psi\in\Psi_{\rm 2}(\GG^*)$ given by \eqref{defpsi}, 
there are~a~lo\-cal Ar\-thur parameter $\psi_v$ 
and a local Arthur packet $\Pi_{\psi_v}(\GG^*(k_v))$ 
for each {finite} place $v$ of $k$:
see \eqref{psiv} and \eqref{multisetApacket} below.

Let $v$ be a finite place of $k$,
and consider the local component $\Pi_{i,v}$ for some $i$. 
It is a unitarisa\-ble~irre\-du\-cible representation of $\GL_{N_i}(k_v)$. 
Associated with it through the local Langlands~cor\-res\-pon\-dence
for $\GL_{N_i}(k_v)$,
there is a local Langlands~pa\-ra\-meter 
\begin{equation*}
\phi_{i,v}:\WD_{k_v}\to\GL_{N_i}(\CC), 
\end{equation*}
uniquely determined up to $\GL_{N_i}(\CC)$-conjugacy.
Since one does not know whe\-ther $\Pi_{i,v}$ is tempe\-red,
the parameter $\phi_{i,v}$ might not be~boun\-ded.

We define a morphism 
\begin{equation}
\label{psiv}
\psi_v = (\phi_{1,v} \boxtimes \SS_{d_1})
\oplus\dots\oplus
(\phi_{r,v} \boxtimes \SS_{d_r}) : 
\WD_{k_v}\times\SL_2(\CC) \to \GL_{N}(\CC)
\end{equation}
where $\SS_d={\rm Sym}^{d-1}$ denotes the unique irreducible algebraic 
representation of $\SL_2(\CC)$ of dimen\-sion $d\>1$.
Recall that we have defined a morphism $\Std$ in \ref{defstd}.
By \cite{Arthur} Theorem 1.4.2,
there~is a local Arthur parameter
$\xi:\WD_{k_v}\times\SL_2(\CC)\to\GGH\rtimes\W_{k_v}$
such that $\psi_v$ is $\GL_N(\CC)$-conjugate to
$\Std\circ\xi$.
The parameter $\xi$ is uniquely~determined~up to $\GGH$-conjugacy,
except if $\GG^*=\SO_{2n}^\a$ and
all $N_1d_1,\dots,N_rd_r$ are even,
in which case there are two $\GGH$-conjugacy classes of such $\xi$.

Associated with $\psi_v$,
there is a multiset $\Pi_{\psi_v}(\GG^*(k_v))$
of irreducible smooth re\-pre\-sen\-ta\-tions of 
$\GG^*(k_v)$, that is, a map
\begin{equation}
\label{multisetApacket}
\Pi(\GG^*(k_v)) \to \ZZ_{\>0}
\end{equation}
with finite support,
where $\Pi(\GG^*(k_v))$ is the set of isomorphism classes of 
irreducible smooth~re\-presentations of $\GG^*(k_v)$.
If $\psi_v(\W_{k_v})$ is relatively compact in $\GL_{N}(\CC)$,
this comes from \cite{Arthur}~Theo\-rems 1.5.1, 2.2.1, 2.2.4
and \eqref{multisetApacket} is supported in the subset 
$\Pi_{\rm unit}(\GG^*(k_v))$ of unitarisable repre\-sen\-tations.
Thanks to Moeglin~(\cite{MoeglinMultiplicite1},
see also \cite{BinXuCJM17} Theorem 8.12),
it does not take any value~$>1$,
that is, $\Pi_{\psi_v}(\GG^*(k_v))$ can be~re\-gar\-ded as 
a finite subset~of $\Pi_{\rm unit}(\GG^*(k_v))$.

When $\psi_v(\W_{k_v})$ is not relatively compact, 
$\Pi_{\psi_v}(\GG^*(k_v))$ 
is obtained from~the relatively compact
case~by a parabo\-lic induction process:
see \cite{Arthur} 1.5 in the symplectic and orthogonal cases
and~\cite{AGRT17} 6.5 in the even orthogonal case.
For our purpose, it will be enough to make the following remark. 

\begin{rema}
\label{psibdd}
Let $v$ be a finite place of $k$,
and assume that $\Pi_{\psi_v}(\GG^*(k_v))$
contains a cus\-pi\-dal representation.
Then $\psi_v(\W_{k_v})$ is relatively compact in $\GL_N(\CC)$.
\end{rema}

When $\psi_v$ is trivial on the $\SL_2(\CC)$-factor,
that is,
$\psi_v$ is a local Langlands parameter for $\GG^*(k_v)$,
the local Arthur packet $\Pi_{\psi_v}(\GG^*(k_v))$
coincides with the $L$-packet associated with $\psi_v$
by the local Langlands correspondence in \eqref{LLCv} and \eqref{LLCveven}.
{(See \cite{AGRT17} top of Paragraph 6.3.)}

\subsection{Transfer}
\label{ST}

In this paragraph,
$k$ is a totally real number field,
$\GG^*$ is one of the quasi-split special orthogo\-nal,
unitary or symplectic $k$-groups
{of \ref{quasisplitg}}
and $\GG$ is an inner form of $\GG^*$ over $k$ 
such that $\GG(k_v)$ is~com\-pact for any real place $v$
and quasi-split for any finite place $v$.

In order to state the following theorem,
we need more than the group $\GG$. 
Following \cite{Taibi}~and \cite{KMSW},
we realize $\GG$ as 
\begin{itemize}
\item[$\bullet$]
a rigid inner twist of $\GG^*$ in the symplectic case (see Paragraph \ref{Sp}),
\item[$\bullet$] 
a pure inner twist of $\GG^*$ in the special orthogonal and unitary cases,
that is,
we fix
a~qua\-dra\-tic form $q$ such that $\GG=\SO(q)$
or a Hermitian form $h$ such that $\GG=\U(h)$.
{(See for instance \cite{BookInvolutions} Sections 29.D, 29.E.)}
\end{itemize}

If $\GG^*$ is special orthogonal,
let $q^*$ be the quadratic form \eqref{defqodd} or \eqref{defqeven} 
such that $\GG^*=\SO(q^*)$,
and let $\a=(-1)^{n(n-1)/2}\d(q^*)$ be its normalized discriminant.
Let $v$ be a finite place of $k$:
\begin{itemize}
\item[$\bullet$] 
if $q\otimes k_v$ is equivalent to $q^*\otimes k_v$,
any choice of $k_v$-isomorphism $f$ such that 
$q=q^*\circ f$ defines a group isomorphism
$\jmath:\GG(k_v)\simeq\GG^*(k_v)$,
and changing $f$ changes $\jmath$ by an inner automorphism, 
which does not affect isomorphism classes of representations of these
groups;
\item[$\bullet$] 
if $q\otimes k_v$ is not equivalent to $q^*\otimes k_v$,
which can only happen when $\GG^*=\SO^\a_{2n}$ with $\a\neq1$,
there is a $\l\in k_v^\times$ 
such that $q\otimes k_v$ is equivalent to $\l\cdot(q^*\otimes k_v)$.
We thus have (canonically upto an inner automorphism)
$\GG(k_v)\simeq\SO(\l\cdot(q^*\otimes k_v))=\GG^*(k_v)$.
\end{itemize}

If $\GG^*$ is unitary, let $h^*$ be the $l/k$-Hermitian form \eqref{defh} 
such that $\GG^*=\U(h^*)$.
Let $v$ be~a~fi\-nite~place of $k$:
\begin{itemize}
\item[$\bullet$] 
if $h\otimes k_v$ is equivalent to $h^*\otimes k_v$,
any choice of isomorphism $f$ such that 
$h=h^*\circ f$ defines a group isomorphism
$\jmath:\GG(k_v)\simeq\GG^*(k_v)$,
and changing $f$ changes $\jmath$ by an inner automorphism,
which does not affect isomorphism classes of representations of these
groups;
\item[$\bullet$] 
if $h\otimes k_v$ is not equivalent to $h^*\otimes k_v$,
which can only happen when $\GG^*=\U^\a_{2n+1}$ with $\a\neq1$,
there is a $\d\in k_v^\times$ 
such that $h\otimes k_v$ is equivalent to $\d\cdot(h^*\otimes k_v)$.
We thus have (canonically up to an inner automorphism)
$\GG(k_v)\simeq\U(\d\cdot(h^*\otimes k_v))=\GG^*(k_v)$.
\end{itemize}

If $\GG^*$ is the symplectic group $\Sp_{2n}$,
then $\GG(k)$ is the group made of all $g\in\Mat_n(D)$ such~that $g^*g=1$, 
where~$D$ is a quaternion $k$-algebra which is split~at~each finite
place and definite ~at~each real place,
and $g^*$ in the matrix whose $(i,j)$-entry is the conjugate of $g_{ji}$.
(See \ref{THEOFS} and \cite{TaibiMRL16} 2.1.1.)
Let $v$ be a finite~place of $k$,
and fix an isomorphism of $k_v$-algebras
$u:D\otimes_k k_v\simeq \Mat_2(k_v)$. 
Through $u$, the group $\GG(k_v)$ identifies with 
$\Sp(f_v)$ for some alternating form $f_v$ on $k_v^{2n}\times k_v^{2n}$.
Changing $u$ changes this identification by an inner automorphism. 
We thus have (canonically up to an~in\-ner automorphism)
a group isomorphism
$\GG(k_v)\simeq\GG^*(k_v)$.

In all cases, 
we have explained how to~ca\-no\-ni\-cally identify representations of
$\GG(k_v)$ with those of $\GG^*(k_v)$.
This thus defines a local transfer for irreducible representations of
$\GG(k_v)$.

\begin{theo}
\label{transfertTaibi}
\label{compatibilitelocalglobal}
Assume that the group $\GG^*$ is symplectic or special orthogonal.
Let $\pi$ be~an~ir\-re\-ducible au\-tomorphic representation 
of $\GG(\AA)$
and suppose~that the\-re is a finite place $u$ of $k$ such that
both the local component $\pi_u$ and its local transfer to $\GL_N(k_u)$
are cuspidal. 
There is a uni\-que
self-dual cuspidal automorphic representation $\Pi$
of $\GL_{N}(\AA)$ such that
\begin{enumerate}
\item 
for all {finite} places $v$ of $k$,
the local transfer of $\pi_v$ to $\GL_N(k_v)$ is $\Pi_v$, 
\item 
for all real places $v$ of $k$,
the infinitesimal character of $\Pi_v$ is algebraic regular.
\end{enumerate}
\end{theo}

\begin{proof}
First note that,
associated with any discrete global Arthur parameter
$\psi\in\Psi_{\rm 2}(\GG^*)$~and
any finite place $v$~of~$k$, there is 
a local Arthur packet $\Pi_{\psi_v}(\GG^*(k_v))$.
We explained how to~ca\-no\-ni\-cally identify representations of
$\GG(k_v)$ with those of $\GG^*(k_v)$.
This thus defines a local Arthur packet $\Pi_{\psi_v}(\GG(k_v))$. 

Now, as $\GG$ is compact at all real places and quasi-split at all finite
places, \cite{Taibi} Theorem 4.0.1 and Remark 4.0.2 apply. 
We thus get a global Arthur parameter $\psi$ for $\GG^*$ such that
\begin{enumerate}
\item 
$\pi_{v} \in \Pi_{\psi_v}(\GG(k_v))$ for all {finite} places $v$ of $k$,
\item 
the infinitesimal character of $\psi_v$ is algebraic regular
for all real places $v$ of $k$.
\end{enumerate} 
In the remainder of the proof,
we follow an argument which has been suggested to us by A.~Moussaoui,
whom we thank for this. 
First, at $v=u$, we have
\begin{equation*}
\pi_u \in \Pi_{\psi_u}(\GG(k_u)) 
\end{equation*}
and it follows from Remark \ref{psibdd} 
that $\psi_u(\W_{k_u})$ is relatively compact in $\GL_N(\CC)$. 
Associa\-ted~with $\psi_u$ in \cite{MoeglinLIE09} 4.1, 
there is its~\textit{ex\-ten\-ded cuspidal support} 
(or \textit{infinitesimal character}),
denoted $\l_u$.
It is the $N$-dimensional~re\-pre\-sen\-tation of $\W_{k_u}$
defined by
\begin{equation*}
\l_u(w) = \psi_u(w,\textsf{d}_w,\textsf{d}_w),
\quad
\textsf{d}_w =
\begin{pmatrix}
|w|^{1/2} & 0 \\ 0 & |w|^{-1/2}
\end{pmatrix}
\in\SL_2(\CC),
\quad
w\in\W_{k_u},
\end{equation*}
where $w\mapsto|w|$ is the character $\W_{k_u}\to\RR^\times_{+}$
defined by $|w|=q^{-v(w)}$,
where $q$ is the cardinality of the residue field of $k_u$ and
$v(w)\in\ZZ$ is the valuation of $w$,
normalized so that any geometric Frobenius element
has valuation $1$.
If we write explicitly
\begin{equation*}
\psi_u = \bigoplus_{i=1}^{m} \s_{i} \boxtimes \SS_{a_i} \boxtimes \SS_{b_i}
\end{equation*}
for some $m\>1$,
with $a_i,b_i\>1$ and where $\s_i$ is an irreducible representation of 
$\W_{k_u}$, then 
\begin{equation}
\label{expandlu}
\l_u = \bigoplus_{i=1}^{m} \bigoplus_{j=0}^{b_i-1} \bigoplus_{k=0}^{a_i-1}
\s_{i} |\cdot|^{(b_i-1)/2+(a_i-1)/2-j-k}. 
\end{equation}
On the other hand,
by \cite{MoeglinLIE09} 4.1 again, 
the \textit{extended cuspidal support} 
(or \textit{infinitesimal character})~of $\pi_u$ is 
the repre\-sen\-tation $\l$ of $\W_{k_u}$ defined by
$\l(w) = \phi(w,\textsf{d}_w)$ for all $w\in\W_{k_u}$,
where $\phi=\Std\circ\h$ and $\h$ is the Langlands parameter
associated with $\pi_u$
(up to ${\rm O}_{2n}(\CC)$-conjugacy in the even~ortho\-gonal case).
Given the~as\-sump\-tion that we made on $\pi_u$,
the extended cuspidal support $\l$ is~irre\-du\-cible.
By \cite{MoeglinLIE09} Proposition 4.1,~the
extended~cus\-pidal supports of $\psi_u$ and $\pi_u$
coincide.
It~fol\-lows that \eqref{expandlu} is ir\-reducible,
which~im\-plies that $m=1$ and $a_1=b_1=1$.
Thus $\psi$ satisfies $r=1$ and $d_1=1$.

We thus have $\psi=\Pi[1]$ for a uniquely determined self-dual
cuspidal automorphic irreducible representation
$\Pi$ of $\GL_N(\AA)$. 
Given a finite place $v$ of $k$,
the local component $\pi_v$ is in the Arthur packet
$\Pi_{\psi_v}(\GG(k_v))$.
Since $\psi_v$ is a Langlands parameter (as $d_1=1$),
this Arthur packet is an $L$-packet,
thus $\Pi_v$ is the local transfer of $\pi_v$ to $\GL_N(k_v)$.
\end{proof}

We now consider the case of unitary groups.

\begin{theo}
\label{transfertLabesse}
Assume that the group $\GG^*$ is unitary. 
Let $\pi$ be an irreducible automorphic~re\-presentation of $\GG(\AA)$,
and suppose~that the\-re is a finite place $u$ of $k$ such that
$\GG(k_u)$ is split and $\pi_u$ is cuspidal. 
There exists~a~uni\-que
conjugate-self-dual cuspidal automorphic representation $\Pi$
of $\GL_{N}(\AA_l)$ such that
\begin{enumerate}
\item 
for all {finite} places $v$ of $k$,
the local transfer of $\pi_v$ to $\GL_N(l_v)$ is $\Pi_v$, 
\item 
for all real places $v$ of $k$,
the infinitesimal character of $\Pi_v$ is algebraic regular.
\end{enumerate}
\end{theo}

\begin{proof} 
Since $\GG$ is compact at all real places,
the assumptions of \cite{Labesse} Corollaire 5.3 are~sa\-tis\-fied~(see
the paragraph following \cite{Labesse} Remarque 5.2 regarding 
Property $(*)$).
By \cite{Labesse}~Co\-rol\-laire 5.3,
there is an integer $r\>1$ and,
for each $i\in\{1,\dots,r\}$,
there is a conjugate-self-dual discrete automorphic representation
$\Pi_i$ of $\GL_{N_i}(\AA_l)$ for some $N_i\>1$, such that
\begin{itemize}
\item 
one has $N_1+\dots+N_r=N$,
\item
if $\Pi$
is the irreducible automorphic representation of $\GL_N(\AA_l)$ obtained by
parabolic induction from $\Pi_1 \otimes \dots \otimes \Pi_r$,
then $\Pi_v$ is the local transfer of $\pi_v$ for all finite places $v$ 
which are either unramified or split.
(The local base change of \cite{Labesse} is the same as the local
transfer of Paragraph \ref{defloctrans}: see \cite{Labesse} 4.10.) 
\end{itemize}
In particular, 
for $v=u$,
the group $\GG(k_u)$ is split, 
thus $\Pi_u^{}$ is isomorphic to $\pi_u^{} \otimes\pi_u^\vee$
\textit{via} the~choice of a $k_u$-algebra isomorphism 
$l_u \simeq k_u\times k_u$
(see Remark \ref{remaBCUSPLIT}). 
Since $\pi_u$ is cuspidal, $\Pi_u$ is~cuspi\-dal~as well.
It follows that $r=1$ and $\psi$ is cuspidal.  
By \cite{Labesse} Th\'eor\`eme 5.9, 
we get that
\begin{itemize}
\item[$\bullet$]
$\Pi_v$ is the base change of the trivial character of $\GG(\k_v)$,
thus its infinitesimal character~is~al\-gebraic regular, 
for all real places $v$ of $k$,
\item[$\bullet$]
and the local transfer of $\pi_v$ to $\GL_N(l_v)$ is $\Pi_v$
for all {finite} places $v$ of $k$.
\end{itemize}
This finishes the proof of Theorem \ref{transfertLabesse}.
\end{proof}

\section{Unramified local transfer}
\label{ULT}

In this section,
we examine the congruence properties of the local transfer
(as defined in~Pa\-ra\-graph \ref{defloctrans})
for unramified representations of unramified classical groups. 

\subsection{}
\label{Clery}

Let $F$ be a non-Archimedean locally compact field of residue
characteristic $p$, 
and $G$ be~the group of rational points of 
an unramified reductive group $\GG$ defined over $F$.
Let
${\bf S}$ be a maximal $F$-split torus in $\GG$,
$\TT$ be the centralizer of ${\bf S}$ in $\GG$ and
$K$ be a~hy\-per\-spe\-cial maximal compact subgroup of $G$
corresponding to a hy\-per\-spe\-cial point in the apartment associated with
${\bf S}$ in the reduced Bruhat-Tits building of $(\GG,F)$. 
Let $W$ be the Weyl group associated with $T=\TT(F)$~and
$\Li$ be the $\ZZ$-lattice $T/(T\cap K)$. 
We have the Satake isomorphism (\cite{Satake}) of $\CC$-algebras
\begin{eqnarray*}
\CC[K\backslash G \slash K] &\to& \CC[\Li]^W \\
f &\mapsto& \left(t \mapsto \d^{1/2}(t)\int\limits_{U} f(tu)\ {\rm d}u \right)
\end{eqnarray*}
where $U$ is the group of rational points of the unipotent radical of a Borel
subgroup ${\bf B}=\TT{\bf U}$~of $\GG$,
${\rm d}u$ is the Haar measure on $U$ giving measure $1$ to $U\cap K$
and $\d^{1/2}$ is the square root of the mo\-du\-lus character $\d$ of $B={\bf B}(F)$
defined with respect to the positive square root $\sqrt{q}\in\RR_{>0}$~of $q$,
the cardinality of the residue field of $F$.

The same formula applies when one replaces $\CC$ by $\qlb$.
We then get a Satake isomorphism of $\qlb$-algebras
$\qlb[K\backslash G \slash K] \to \qlb[\Li]^W$
depending on the choice of a square root $q^{1/2}$
of~$q$~in~$\qlb$.~By
\cite{HenniartVignerasCRELLE} \S7.10--15,
as this square root and its inverse are contained in $\zlb$, 
this isomorphism in\-du\-ces by restriction an isomorphism 
\begin{equation}
\label{bilanof}
\zlb[K\backslash G \slash K] \to \zlb[\Li]^W
\end{equation}
of $\zlb$-algebras.

\subsection{}
\label{par62}

Let $\pi$ be a $K$-unramified irreducible $\qlb$-representation~of~$G$, 
that is, $\pi$ has a non-zero $K$-fi\-xed~vector.
Recall that its \textit{Satake parameter} is
the character $\chi$ of
$\qlb[K\backslash G \slash K]$ 
through which this algebra acts on the $1$-di\-men\-sional 
space $\pi^K$ of $K$-invariant vectors of $\pi$.
Through the Satake isomorphism,
it defines~a cha\-rac\-ter of $\qlb[\Li]^W$.
Such a character is of the form 
\begin{equation}
\label{omom}
f \mapsto \int\limits_{T} f(t) \omega(t)\ {\rm d}t
\end{equation}
for some unramified $\qlb$-character $\omega$ of $T$
-- which we may consider as a cha\-racter of $\Li$ -- 
uniquely determined up to $W$-conjugacy. 
(Here ${\rm d}t$ is the Haar measure giving measure $1$ to $T\cap K$.)~{By
\cite{Satake},
the $W$-conjugacy class of $\omega$ 
is the cuspidal support of $\pi$,
that is, $\pi$ occurs as an irreducible component of the
representation obtained by parabolically inducing $\omega$ to $G$
along $B$, 
\textit{where~pa\-ra\-bolic induction is normalized by the same square root of
the $\qlb$-modulus $\d$ as the one used to define the Satake $\qlb$-isomorphism.}}

Now assume that the restriction of $\chi$ to $\zlb[K\backslash G \slash K]$
has values in $\zlb$.
Thanks to \eqref{bilanof},~it~defi\-nes a $\zlb$-cha\-rac\-ter of 
$\zlb[\Li]^W$, still denoted $\chi$.
Let us prove that 
$\omega$ has values in~$\overline{\mathbb{Z}}{}_\ell^\times$. 
For~this,~let $\mu$~be the $\qlb$-cha\-rac\-ter 
of $\qlb[\Li]$ defined by \eqref{omom}.
Its restriction to $\zlb[\Li]^W$ is equal to $\chi$.
According to \cite{BourbakiAC} Chapter 5, \S1, n°9, Proposition 22, 
the ring $\zlb[\Li]$ is integral over $\zlb[\Li]^W$.
As $\chi$~takes~va\-lues~in $\zlb$ on $\zlb[\Li]^W$,
and as $\zlb$ is integrally closed,
it follows that $\mu$ takes values in $\zlb$ on $\zlb[\Li]$.
By~eva\-lua\-ting $\mu$ at the characteristic function of any $\l\in\Li$,
we get $\omega(\l)\in\zlb$.
So far,
we proved the following result.

\begin{prop}
\label{enonceULT0}
Let $G$ be the group of rational points of 
an unramified group defined over $F$, 
let $K$ be a~hy\-per\-spe\-cial maximal compact subgroup of $G$
and $\pi$ be a $K$-unramified 
$\qlb$-re\-pre\-sen\-ta\-tion of $G$ with Satake parameter
$\chi$.
Then $\pi$ is integral if and only if $\chi$ is integral
(that is, it~ta\-kes integral values on $\zlb[K\backslash G \slash K]$).
\end{prop}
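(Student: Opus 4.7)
The equivalence is proved by treating the two implications separately; both rest on the Hecke algebra action on the one-dimensional line of $K$-fixed vectors.

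\emph{Direction} $\pi$ \emph{integral} $\Rightarrow$ $\chi$ \emph{integral.} Let $V$ denote the space of $\pi$ and fix a $G$-stable $\zlb$-lattice $L$ in $V$. Since $V^K$ is one-dimensional over $\qlb$, one may choose $v \in V^K$ generating the nonzero $\zlb$-submodule $L \cap V^K$. With Haar measure giving $K$ measure one, the convolution algebra $\zlb[K \backslash G \slash K]$ preserves $L$, and for any $H$ in this algebra one has $\chi(H) v = H \cdot v \in L$; hence $\chi(H) v \in L \cap V^K = \zlb v$, which forces $\chi(H) \in \zlb$.

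\emph{Direction} $\chi$ \emph{integral} $\Rightarrow$ $\pi$ \emph{integral.} The computation in the paragraph preceding the statement already yields $\omega(\lambda) \in \zlb$ for every $\lambda \in \Li$; applying it to $\lambda^{-1}$ also gives $\omega(\lambda)^{-1} \in \zlb$, so $\omega(\Li) \subset \zlb^\times$. By \cite{HenniartVignerasCRELLE} \S 7.10--15, one has $q^{\pm 1/2} \in \zlb^\times$; since $\d$ takes integer-power-of-$q$ values on $T$ and is trivial on $U$, the twisted character $\d^{1/2}\omega$ of $B = T U$ (extending $\omega$ trivially across $U$) is valued in $\zlb^\times$. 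The normalized parabolic induction $I$ of $\omega$ from $B$ to $G$ is realized as the space of smooth $f : G \to \qlb$ with $f(bg) = (\d^{1/2}\omega)(b) f(g)$, and the subspace $I^\circ$ of such $f$ taking values in $\zlb$ on $K$ is a $G$-stable $\zlb$-lattice in $I$ by the Iwasawa decomposition $G = BK$ and the integrality of $\d^{1/2}\omega$ (checked on $K$ after writing $kh = bk'$ with $b \in B$, $k' \in K$). Hence $I$ is integral. By Satake's theorem recalled immediately before the statement, $\pi$ is an irreducible subquotient of $I$, and since subrepresentations and quotients of integral smooth $\qlb$-representations are integral, $\pi$ itself is integral.

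The essential content lies in the converse, whose only delicate input is the compatibility between the square root $q^{1/2}$ fixing the Satake isomorphism \eqref{bilanof} and the one normalizing parabolic induction: it is exactly this compatibility, stated in italics in Paragraph \ref{par62}, that guarantees $\pi$ is a subquotient of the chosen induced representation rather than of some twist of it. With this compatibility in hand, no further obstacle appears.
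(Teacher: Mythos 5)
Your proof is correct, and it is genuinely more self-contained than the one in the paper. The paper reduces everything to the single pivot ``$\pi$ is integral if and only if its cuspidal support $\omega$ is integral,'' and outsources that pivot to \cite{DHKMfiniteness} Corollary 1.6. You bypass that citation entirely. For $\pi$ integral $\Rightarrow \chi$ integral, you argue directly on the line of $K$-fixed vectors: a $G$-stable $\zlb$-lattice $L$ is in particular stable under convolution by $\zlb[K\backslash G\slash K]$, so $\chi(H)\,L^K\subseteq L^K$, and boundedness of $L^K$ forces $\chi(H)\in\zlb$. This is shorter and does not pass through $\omega$ at all. One small caveat: you assert that $L\cap V^K$ is a cyclic $\zlb$-module, which holds once $L$ is (as it may always be taken) a lattice defined over a finite subextension of $\qlb$; but even without this, the inclusion $\chi(H)\,L^K\subseteq L^K$ already gives $\chi(H)\in\zlb$ for any nonzero bounded $\zlb$-submodule $L^K$, so the argument is robust.

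For the converse you reuse the integrality of $\omega$ established in the preceding paragraph, build the evident lattice $I^\circ$ in the normalized induction via $G=BK$ and the $\zlb^\times$-valuedness of $\d^{1/2}\omega$, and then invoke ``subquotients of integral admissible representations are integral.'' That step is correct but is not a tautology: passing a lattice to a quotient uses that $K'$-invariants is exact for a pro-$p$ subgroup $K'\subseteq K$ (valid since $\ell\neq p$), and that $I$ is admissible of finite length so that $I^{\circ,K'}$ is finitely generated. It is substantially weaker than the DHKM finiteness theorem, but it deserves a citation rather than an ``obviously''; the paper itself points to \cite{Vigb} II.4.14 and \cite{DatNuTempered} Proposition 6.7 for integrality of subquotients in the analogous split $\SO_2$ discussion, and those references cover what you need here. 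The trade-off between the two proofs is clear: the paper's is one line at the cost of a heavy external input; yours is longer but elementary and local, and your remark about compatibility of the square root fixing the Satake isomorphism with the normalization of induction is exactly the right thing to flag.
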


\begin{proof}
Indeed, 
using the notation above, 
the cuspidal support of $\pi$ is the $W$-conjugacy class of the
unramified character $\omega$ of $T$,
and $\pi$ is integral if and only if $\omega$ is.
(For this latter fact, see
\cite{DHKMfiniteness} Corollary 1.6.)
\end{proof}

Finally,
assume that $\chi_1$ and $\chi_2$ are congruent
$\zlb$-characters of $\zlb[K\backslash G \slash K]$.
One can~see~them
\textit{via} \eqref{bilanof}
as congruent $\zlb$-characters of $\zlb[\Li]^W$,
still denoted $\chi_1$ and $\chi_2$. 
For $i=1,2$,~let~$\mu_i$~be~a character
of $\zlb[\Li]$ extending $\chi_i$.
It takes the form \eqref{omom} for a uniquely determined
unramified~cha\-rac\-ter $\omega_i$ of $T$,
which is integral thanks to the previous paragraph. 
Reducing~mod~the~maxi\-mal ideal of $\zlb$,
the characters $\mu_1$ and $\mu_2$ define
$\flb$-characters $\overline{\mu}_1$ and $\overline{\mu}_2$
of $\flb[\Li]$ which, by as\-sump\-tion,
coincide on $\flb[\Li]^W$.
Applying the corollary of
\cite{BourbakiAC} Chapter 5, \S2, n°2, Theorem 2, 
it follows that
the~cha\-racters $\r_\ell(\omega_1)$ and $\r_\ell(\omega_2)$ 
are $W$-conjugate.
We thus proved:

\begin{prop}
\label{enonceULT1}
Let $G$ be the group of rational points of 
an unramified group defined over $F$, 
let $K$ be a~hy\-per\-spe\-cial maximal compact subgroup of $G$,
let $\pi_1$ and $\pi_2$ be $K$-unramified~irredu\-ci\-ble
$\qlb$-re\-pre\-sen\-ta\-tions of $G$ whose Satake parameters
$\chi_1$ and $\chi_2$ define congruent $\zlb$-characters of 
$\zlb[K\backslash G \slash K]$ and let $\omega_1$ and $\omega_2$
be unramified $\qlb$-characters of $T$ such that $\pi_i$
occurs in the~pa\-ra\-bolic induction of $\omega_i$ to $G$ along $B$,
for $i=1,2$.
Then 
$\r_\ell(\omega_1)$ and $\r_\ell(\omega_2)$
are $W$-conjugate.
\end{prop}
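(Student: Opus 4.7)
The plan is to push everything through the Satake isomorphism \eqref{bilanof} and reduce the statement to a classical fact about $W$-conjugacy of characters of integral ring extensions. First, I would transport $\chi_1$ and $\chi_2$ \emph{via} \eqref{bilanof} to two congruent $\zlb$-characters of $\zlb[\Li]^W$, still denoted $\chi_1,\chi_2$. For each $i\in\{1,2\}$, the unramified $\qlb$-character $\omega_i$ of $T$ defines a $\qlb$-character $\mu_i$ of $\qlb[\Li]$ by the formula \eqref{omom}, whose restriction to $\qlb[\Li]^W$ equals $\chi_i$ through the Satake isomorphism.

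The next step is to argue that $\mu_1$ and $\mu_2$ are integral, i.e.\ take values in $\zlb$ on $\zlb[\Li]$. This is exactly the content of the argument preceding Proposition \ref{enonceULT0}: since $\zlb[\Li]$ is integral over $\zlb[\Li]^W$ (by \cite{BourbakiAC} Chapter 5, \S 1, n\degre 9, Proposition 22), and $\mu_i$ restricts to the $\zlb$-valued character $\chi_i$ on $\zlb[\Li]^W$, integral closedness of $\zlb$ forces $\mu_i(\zlb[\Li])\subseteq\zlb$. Evaluating at the characteristic function of $\l\in\Li$ then gives $\omega_i(\l)\in\overline{\mathbb{Z}}{}_\ell^\times$, so the reduction mod $\ell$ of $\omega_i$ is a well-defined $\flb$-character of $T$ (equivalently of $\Li$), and $\mu_i$ reduces to an $\flb$-character $\overline{\mu}_i$ of $\flb[\Li]$.

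By the congruence hypothesis on $\chi_1,\chi_2$, the two $\flb$-characters $\overline{\mu}_1$ and $\overline{\mu}_2$ coincide on the subring $\flb[\Li]^W$. The conclusion is then obtained by invoking the corollary to \cite{BourbakiAC} Chapter 5, \S 2, n\degre 2, Theorem 2, which states that two characters of a ring $A$ that agree on the subring $A^W$ of invariants of a finite group action are conjugate under $W$, provided $A$ is integral over $A^W$. Applied here, this yields that $\overline{\mu}_1$ and $\overline{\mu}_2$ are $W$-conjugate, i.e.\ $\r_\ell(\omega_1)$ and $\r_\ell(\omega_2)$ are $W$-conjugate characters of $\Li$.

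The routine work has already been laid out in the preceding paragraph of the paper; the only point requiring slight care is to verify that the Bourbaki lemma is applicable in positive characteristic, but this is immediate since $W$ is finite and $\flb[\Li]$ is a finitely generated $\flb[\Li]^W$-algebra on which $W$ acts by $\flb$-algebra automorphisms, so integrality is automatic.
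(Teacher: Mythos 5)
Your proof is correct and follows essentially the same route as the paper: transport the two Satake parameters through \eqref{bilanof} to congruent $\zlb$-characters of $\zlb[\Li]^W$, extend them to $\zlb$-valued characters of $\zlb[\Li]$ using the integrality of $\zlb[\Li]$ over $\zlb[\Li]^W$ and the integral closedness of $\zlb$, reduce modulo the maximal ideal, and invoke the corollary of \cite{BourbakiAC} Chapter~5, \S2, n\degre 2, Theorem~2 to conclude that the reductions are $W$-conjugate. One small slip in your closing remark: the clause ``$\flb[\Li]$ is a finitely generated $\flb[\Li]^W$-algebra, so integrality is automatic'' inverts the correct reasoning --- finite generation as an algebra does not imply integrality; rather, integrality of $A$ over $A^W$ holds because $W$ is finite (every $a\in A$ is a root of the monic $W$-invariant polynomial $\prod_{w\in W}(X-w(a))$), and this is precisely the content of the cited Bourbaki Proposition~22.
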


\subsection{}
\label{NRCLASS}

From now on and until the end of this section,
we assume that $\GG$ is an unramified special~or\-tho\-gonal,
unitary
or~sym\-plectic group among the groups of Paragraph \ref{quasisplitg}.
The associated dual group $\GGH$ has been defined in
Paragraph \ref{par41}.
Recall that $G=\GG(F)$.

Let $\pi$ be an integral $K$-unramified $\qlb$-representation~of~$G$.
Its cuspidal support is the $W$-or\-bit of an unramified $\zlb$-cha\-rac\-ter
$\omega$ of $T$.
Its Satake parameter is a character
$\chi:\zlb[K\backslash G \slash K]\to\zlb$.
They are 
related through the Satake isomorphism by the formula \eqref{omom}.

Restriction from $T$ to $S={\bf S}(F)$ induces an isomorphism 
$\Li \simeq S/(S\cap K)$,
thus between~un\-ra\-mified characters of $T$ and 
unramified characters of $S$.
The later is the dual group $\widehat{\MM}(\qlb)$.

Let $\Phi$ be a Frobenius element in the Weil group $\W_F$.
By \cite{BorelCorvallis} 6.4, 6.5,
the surjection of $\widehat{\TT}(\qlb)$ onto
$\widehat{\MM}(\qlb)$~in\-du\-ces a bijection between
\begin{itemize}
\item 
$N$-conjugacy classes in $\widehat{\TT}(\qlb)\rtimes\Phi$, and 
\item
$W$-conjugacy classes in $\widehat{\MM}(\qlb)$,
\end{itemize}
where $N$ is the inverse image of $W$ in the normalizer
of $\widehat{\TT}(\qlb)$ in $\GGH(\qlb)$,
and the embedding of of $\widehat{\TT}(\qlb)$
in $\GGH(\qlb)$ induces a bijection between
\begin{itemize}
\item 
$N$-conjugacy classes in $\widehat{\TT}(\qlb)\rtimes\Phi$, and 
\item
$\GGH (\qlb)$-conjugacy classes of semi-simple elements in
$\GGH (\qlb) \rtimes\Phi$.
\end{itemize}
The $W$-orbit of $\omega$ thus determines
the $W$-conjugacy class of a point 
$s\in\widehat{\MM}(\zlb)$,
then the $\GGH(\qlb)$-conju\-gacy class of 
a semi-simple element $t\rtimes\Phi\in\GGH (\qlb) \rtimes\Phi$.
We are going to prove that $t\rtimes\Phi$ may be chosen in
$\widehat{\TT}(\zlb)\rtimes\Phi\subseteq\GGH(\zlb)\rtimes\Phi$.
Let us fix a uniformizer $\w$ of $F$.

When~$\GG$~is split,
we have $\TT=\MM$,
thus $t=s$ is in $\widehat{\TT}(\zlb)\subseteq\GGH(\zlb)$.
Explicitly,
if we identify~$T$ with $(F^\times)^m$ for some integer $m\>1$,
then $\omega$ identifies with the tensor product of $m$
unramified characters 
$\omega_1,\dots,\omega_m$ of $F^\times$
and $t\rtimes\Phi$ is $\GGH(\qlb)$-conjugate to
\begin{itemize}
\item[$\bullet$]
${\rm diag}(\omega_1(\w),\dots,\omega_m(\w),1,
\omega_m(\w)^{-1},\dots,\omega_1(\w)^{-1})\in\GL_{2n+1}(\zlb)$
if $\GG=\Sp_{2n}$, 
\item[$\bullet$]
${\rm diag}(\omega_1(\w),\dots,\omega_m(\w),
\omega_m(\w)^{-1},\dots,\omega_1(\w)^{-1})\in\GL_{2n}(\zlb)$ 
if $\GG=\SO_{2n+1}$
or $\GG=\SO_{2n}^1$, 
\end{itemize}
with $m=n$ in all cases. 

Now assume that $\GG$ is non-split,
thus either $\GG=\SO_{2n}^\a$ or $\GG=\U_n^\a$,
with $\a\neq1$.
\begin{itemize}
\item[$\bullet$] 
In the even orthogonal case, 
we have $S\simeq (F^\times)^{m}$ and $T\simeq S\times\SO_2^\a(F)$
with $m=n-1$ (see \cite{BorelLAG} \S23.4),
thus $\widehat{\TT}(\qlb)\simeq\overline{\QQ}{}_\ell^{m+1}$~sur\-jects
onto $\widehat{\MM}(\qlb)\simeq\overline{\QQ}{}_\ell^{m}$ through 
\begin{equation*}
(t_1,t_2,\dots,t_{m+1}) \mapsto 
(t_1,t_2,\dots,t_{m})
\end{equation*}
and $\widehat{\TT}(\qlb)\rtimes\W_F$
embeds into $\GGH(\qlb)\rtimes\W_F$ through
\begin{equation*}
(t_1,t_2,\dots,t_{n}) \rtimes w \mapsto {\rm diag}(t_1^{\phantom{1}},\dots,
t_{n}^{\phantom{1}},t_{n}^{-1},\dots,t_1^{-1}) \rtimes w
\end{equation*}
thus the image of $t\rtimes\Phi$ in $\GGH(\qlb)\rtimes\Phi$ is
$\GGH(\qlb)$-conjugate to
\begin{equation*}
{{\rm diag}(\omega_1(\w),\dots,\omega_m(\w),1,1,
\omega_m(\w)^{-1},\dots,\omega_1(\w)^{-1})\rtimes\Phi
\in\GL_{2n}(\zlb)\rtimes\Phi.}
\end{equation*}
\item[$\bullet$] 
In the unitary case,
we have $S\simeq (F^\times)^m$ and $T\simeq (E^\times)^m$
where $E$ is the quadratic extension of $F$ generated by a square root of 
$\a$ (note that it is unramified since $\GG$ is assumed to be~unra\-mi\-fied), 
and $m=\lfloor n/2\rfloor$ is the Witt~in\-dex of $\GG$
(see \cite{BorelLAG} \S23.9), thus
$\widehat{\TT}(\qlb)\simeq\overline{\QQ}{}_\ell^{2m}$
surjects onto $\widehat{\MM}(\qlb)\simeq\overline{\QQ}{}_\ell^{m}$
through 
\begin{equation*}
(t_1,t_2,\dots,t_{2m}) \mapsto (t_1t_{2m},t_2t_{2m-1},\dots, t_{m}t_{m+1})
\end{equation*}
and $\widehat{\TT}(\qlb)\rtimes\W_F$
embeds into $\GGH(\qlb)\rtimes\W_F$ through
\begin{equation*}
(t_1,t_2,\dots,t_{2m}) \rtimes w \mapsto
\left\{
\begin{array}{ll}
{\rm diag}(t_1^{\phantom{1}},\dots,
t_{m}^{\phantom{1}},t_{m+1}^{-1},\dots,t_{2m}^{-1}) \rtimes w
& \text{if $n=2m$ is even}, \\ 
{\rm diag}(t_1^{\phantom{1}},\dots,
t_{m}^{\phantom{1}},1,t_{m+1}^{-1},\dots,t_{2m}^{-1}) \rtimes w
& \text{if $n=2m+1$ is odd},
\end{array}
\right.
\end{equation*}
thus the image of $t\rtimes\Phi$ in $\GGH(\qlb)\rtimes\Phi$
is $\GGH(\qlb)$-conjugate to
\begin{equation*}
\left\{
\begin{array}{ll}
{\rm diag}(\omega_1(\w)^{1/2},\dots,\omega_m(\w)^{1/2},
\omega_m(\w)^{-1/2},\dots,\omega_1(\w)^{-1/2}) \rtimes\Phi
& \text{if $n=2m$ is even}, \\ 
{\rm diag}(\omega_1(\w)^{1/2},\dots,\omega_m(\w)^{1/2},1,
\omega_m(\w)^{-1/2},\dots,\omega_1(\w)^{-1/2}) \rtimes\Phi
& \text{if $n=2m+1$ is odd},
\end{array}
\right.
\end{equation*}
which is in $\GL_{n}(\zlb) \rtimes\Phi$ in both cases. 
\end{itemize} 

We now define an unramified local Langlands parameter
$\h:\WD_F\to \GGH(\zlb)\rtimes\W_F$
by
\begin{itemize}
\item 
$\h(\Phi)=t\rtimes\Phi$, and 
\item
$\h$ is trivial on the inertia subgroup $I_F$ of $\W_F$ and on $\SL_2(\CC)$.
\end{itemize}
It is uniquely determined by the $K$-unramified representation $\pi$,
or equivalently by its Satake~pa\-ra\-me\-ter $\chi$.
Com\-posing with $\Std$
(or just restricting to $\WD_E$ in the unitary case), 
we get~an~un\-rami\-fied Langlands parameter
$\phi\in\Phi(\GL_N,E)$,
where $E=F$ in the symplectic and orthogonal ca\-ses
and $E$ is the quadratic extension of $F$ generated by a square root
of $\a$ in the unitary~case. 
This $\phi$ uniquely determines an 
unramified $\qlb$-representation of $\GL_N(E)$,
denoted $\BC_\ell(\pi)$.

In the symplectic and special orthogonal cases,
$\BC_\ell(\pi)$ is the~uni\-que unramified irreducible~com\-po\-nent of 
\begin{equation*}
\left\{
\begin{array}{ll}
\omega_1^{\phantom{1}}\times \dots\times
\omega_n^{\phantom{1}}\times\omega_n^{-1}\times
\dots\times\omega_1^{-1}
& \text{if $\GG=\SO_{2n+1}$ or $\GG=\SO_{2n}^1$}, \\
\omega_1^{\phantom{1}}\times \dots\times
\omega_n^{\phantom{1}}\times 1\times\omega_n^{-1}\times
\dots\times\omega_1^{-1}
& \text{if $\GG=\Sp_{2n}$}, \\
\omega_1^{\phantom{1}}\times \dots\times
\omega_{n-1}^{\phantom{1}}\times 1 \times 1 \times \omega_{n-1}^{-1}\times
\dots\times\omega_1^{-1}
& \text{if $\GG=\SO_{2n}^\a$ with $\a\neq1$},
\end{array}
\right.
\end{equation*}
where $\times$ denotes the parabolic induction to $\GL_N(F)$ normalized
with respect to $q^{1/2}$.

In the unitary case,
the Weil group $\W_E$ is generated by $\Phi^2$ and $I_F$
(since $E/F$ is unramified),
thus $\phi$ is uniquely determined by
$\phi(\Phi^2)=tt^*\rtimes\Phi^2$,
with
\begin{equation*}
tt^*=
\left\{
\begin{array}{ll}
{\rm diag}(\omega_1(\w),\dots,\omega_m(\w),
\omega_m(\w)^{-1},\dots,\omega_1(\w)^{-1}) 
& \text{if $n=2m$ is even}, \\ 
{\rm diag}(\omega_1(\w),\dots,\omega_m(\w),1,
\omega_m(\w)^{-1},\dots,\omega_1(\w)^{-1}) 
& \text{if $n=2m+1$ is odd},
\end{array}
\right.
\end{equation*}
which gives $\BC_\ell(\pi)$ explicitly.
Namely, $\BC_\ell(\pi)$ is the unique unramified irreducible component of
\begin{equation*}
\left\{
\begin{array}{ll}
\omega_1^{\phantom{1}}\times \dots\times
\omega_m^{\phantom{1}}\times\omega_m^{-1}\times
\dots\times\omega_1^{-1}
& \text{if $\GG=\U_{2m}^\a$}, \\ 
\omega_1^{\phantom{1}}\times \dots\times
\omega_m^{\phantom{1}}\times 1\times\omega_m^{-1}\times
\dots\times\omega_1^{-1}
& \text{if $\GG=\U_{2m+1}^\a$},
\end{array}
\right.
\end{equation*}
where $\times$ denotes the parabolic induction to $\GL_n(E)$ normalized
with respect to $(q^{1/2})^{2}=q$~(as 
$E$ is quadratic and unramified over $F$).
(See also \cite{AlbertoLivre}.)
We have:

\begin{prop}
\label{enonceULT}
Let $G$ be the group of rational points of 
an unramified special orthogonal, unitary or symplectic $F$-group 
among the groups of Paragraph \ref{quasisplitg}.
Let $K$ be a~hy\-perspecial~ma\-xi\-mal compact subgroup of $G$
and let $\pi_1$, $\pi_2$ be $K$-unramified irreducible
$\qlb$-re\-pre\-sen\-ta\-tions of~$G$ whose Satake parameters
$\chi_1$, $\chi_2$ define congruent $\zlb$-characters of 
$\zlb[K\backslash G \slash K]$.
Then
\begin{enumerate}
\item 
the representations $\BC_\ell(\pi_1)$ and $\BC_\ell(\pi_2)$ 
of $\GL_N(E)$ are integral,
\item
their Langlands parameters are integral and congruent.
\end{enumerate} 
\end{prop}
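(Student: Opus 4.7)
The plan is to combine Propositions \ref{enonceULT0} and \ref{enonceULT1}, already established for the pair $(G,K)$, with the explicit formulas for $\BC_\ell$ recorded in Paragraph \ref{NRCLASS}, and then to invoke Proposition \ref{enonceULT0} a second time—now applied to $\GL_N(E)$ and a hyperspecial maximal compact subgroup thereof—in order to read off integrality of $\BC_\ell(\pi_i)$ from integrality of its Satake parameter.

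Concretely, I first apply Proposition \ref{enonceULT0} to $\pi_1$ and $\pi_2$: since $\chi_1$ and $\chi_2$ take values in $\zlb$, each $\pi_i$ is integral, so the $W$-orbit of its cuspidal support is described by an unramified $\qlb$-character $\omega_i$ of $T$ with values in $\zlb^\times$. The hypothesis that $\chi_1$ and $\chi_2$ are congruent mod $\m$ lets me apply Proposition \ref{enonceULT1}, which yields that $\r_\ell(\omega_1)$ and $\r_\ell(\omega_2)$ are $W$-conjugate. Replacing $\omega_2$ by a suitable $W$-conjugate, I may assume $\omega_1 \equiv \omega_2 \pmod{\m}$ pointwise. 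In the parametrization recalled in Paragraph \ref{NRCLASS}, this means that the components $\omega_{i,j}(\w) \in \zlb^\times$ satisfy $\omega_{1,j}(\w) \equiv \omega_{2,j}(\w) \pmod{\m}$ for every $j$.

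Next I read off from the explicit formulas in Paragraph \ref{NRCLASS} that $\BC_\ell(\pi_i)$ is the unramified irreducible component of a parabolic induction to $\GL_N(E)$ whose inducing character is diagonal with entries given by Laurent monomials in the $\omega_{i,j}(\w)$ (together with a possible middle entry $1$). Equivalently, the associated Langlands parameter $\phi_i$ is trivial on $\SL_2(\CC)$ and on the inertia subgroup of $\W_E$, and its value at a Frobenius element---$\Phi$ in the symplectic and orthogonal cases, $\Phi^2$ in the unitary case, where the half-integer exponents appearing in the formula for $t\rtimes\Phi$ disappear upon squaring---is that same diagonal matrix. Both assertions now follow at once: the Satake parameter of $\BC_\ell(\pi_i)$ takes values in $\zlb$, so Proposition \ref{enonceULT0} applied to $\GL_N(E)$ yields (1); and the componentwise congruences $\omega_{1,j}(\w) \equiv \omega_{2,j}(\w) \pmod{\m}$ immediately give integrality and congruence of $\phi_1$ and $\phi_2$, proving (2).

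I expect no serious obstacle here, the proof being essentially a book-keeping exercise combining the preparatory results. The only mildly delicate step is in the non-split even orthogonal and unitary cases, where $\widehat{\TT}(\qlb)$ is strictly larger than $\widehat{\MM}(\qlb)$: one has to verify that integrality and congruence of $\omega_i$ as a character of $T$ transfer, via the bijection recalled at the start of Paragraph \ref{NRCLASS} between $W$-orbits in $\widehat{\MM}(\qlb)$ and $N$-conjugacy classes in $\widehat{\TT}(\qlb)\rtimes\Phi$, to integrality and congruence of a suitable representative in $\widehat{\TT}(\zlb)\rtimes\Phi \subseteq \GGH(\zlb)\rtimes\Phi$. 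This is immediate from the explicit formulas already written down there, so no extra work is required.
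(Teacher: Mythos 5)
Your proof is correct and follows essentially the same route as the paper: the paper does not write out a separate proof of Proposition \ref{enonceULT}, since it is an immediate consequence of Propositions \ref{enonceULT0} and \ref{enonceULT1} combined with the explicit formulas for $\BC_\ell$ displayed earlier in Paragraph \ref{NRCLASS}, exactly as you have organized it. Your observation about the half-integer exponents cancelling in $tt^*$ in the unitary case is also what the paper's displayed formula for $tt^*$ is there to record.
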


\begin{rema}
\label{VarNR}
Note that the reductions mod $\ell$ of $\BC_\ell(\pi_1)$ and $\BC_\ell(\pi_2)$ 
may not have any irredu\-ci\-ble component in common.
However,
if $\tau_i$ denotes the unique unramified irreducible component of
$\r_\ell(\BC_\ell(\pi_i))$ 
for $i=1,2$,
then $\tau_1$ and $\tau_2$ have the same cuspidal support.

For instance,
assume that $\GG=\SO_5$ and $\ell$ divides $q^2-1$. 
Let $\pi_1$ (resp. $\pi_2$)
be the unramified representation of $G=\SO_5(F)$
with respect to some hyperspecial maximal compact group,
with cuspidal support the $W$-conjugacy class of
$|\cdot|^{-1/2}\otimes|\cdot|^{1/2}$
(resp. $|\cdot|^{3/2}\otimes|\cdot|^{1/2}$),
where~$|\cdot|$~is the~abso\-lute~value of $F^\times$. 
By assumption,
these cuspidal~sup\-ports are congruent 
(for $|\cdot|^{3/2}$~and $|\cdot|^{-1/2}$~have the same reduction mod 
$\ell$).  
Then
$\BC_\ell(\pi_1)$ is the unique~un\-ra\-mi\-fied~irreducible~compo\-nent of
\begin{equation*}
|\cdot|^{-1/2}\times|\cdot|^{1/2}\times|\cdot|^{-1/2}\times|\cdot|^{1/2}
\end{equation*}
that is, $\BC_\ell(\pi_1)=1_2\times 1_2$
(where $1_2$ is the trivial character of $\GL_2(F)$).
Similarly, $\BC_\ell(\pi_2)$ is equal to $|\det|\times|\det|$
(where $\det$ is the determinant of $\GL_2(F)$).
Now assume further that $\ell\neq2$~and $\ell$ divides $q+1$,
thus $\ell$ does not divide $q-1$.
Then $\r_\ell(\BC_\ell(\pi_1))$ and $\r_\ell(\BC_\ell(\pi_2))$ are both
irreducible and twists of each other by the non-trivial character $|\cdot|$, 
thus non-isomorphic. 
\end{rema}

\begin{rema}
\label{rossmiller}
Let $\ii$ be an isomorphism of fields $\CC\to\qlb$ taking the positive 
square root of~$q$ in $\RR$ to the square root $q^{1/2}\in\qlb$ 
of Paragraph \ref{Clery}.
According to 
Arthur \cite{Arthur} 6.1 (see~p.~304)~and
Mok \cite{Mok} 7.1
(see also Labesse \cite{Labesse} p.~38-39),
which describe the local transfer map~$\BC$ of Defini\-tion~\ref{BCq}
for unramified representations of unramified groups,
we have 
\begin{equation*}
\BC(\pi)\otimes_\CC\qlb = \BC_\ell(\pi\otimes_\CC\qlb)
\end{equation*}
for any $K$-unramified complex representation $\pi$ of $G$,
where tensor products are taken with~res\-pect to $\ii$. 
Proposition \ref{NaturaBC} and Remark \ref{rossmiller2}
will describe the dependency of $\BC_\ell$ 
in $q^{1/2}$.  
\end{rema}

\subsection{}
\label{appaAforNRrep}

In this paragraph, 
$G$ is an unramified classical group 
and $K$ is a hyperspecial maximal compact subgroup of $G$
as in Paragraph \ref{NRCLASS}.

Unlike Paragraph \ref{NRCLASS} however,
we will consider \textit{complex} representations
rather than $\qlb$-repre\-sentations.
We~exa\-mi\-ne the dependency of the unramified transfer 
from $G$ to $\GL_N(E)$~with~res\-pect to the choice~of a square root of $q$,
and to the action of $\Aut(\CC)$.
This will be useful~in~Para\-graph \ref{appA}.

Let $\pi$ be a $K$-unramified irreducible complex representation of $G$. 
Associated with $\pi$,
there is the $W$-conjugacy class of an unramified character $\omega$
of $T$, such that $\pi$ is the~uni\-que $K$-unramified irreducible component
of the normalized parabolic induction of $\omega$ to $G$ along~$B$.
Writing $\Ind^G_B$ for unnormalized parabolic induction from $T$ to $G$ 
along~$B$ and $\ip^G_B$ for normalized parabolic~in\-duction,
we have
\begin{equation*}
\ip_B^G(\omega) = \Ind^G_B(\d^{1/2}\omega).
\end{equation*}
Let us write $T \simeq E^{\times m}\times T_0$, where 
\begin{itemize}
\item[$\bullet$] 
$T_0$ is trivial if $G$ is split or $G\simeq\U_{2m}^\a(F)$ with $\a\neq1$, 
\item[$\bullet$]
$T_0=\SO_2^\a(F)$ if $G\simeq\SO_{2n}^\a(F)$ with $\a\neq1$,
\item[$\bullet$]
$T_0=\U_1^\a(F)$ 
if $G\simeq\U_{2m+1}^\a(F)$ with $\a\neq1$.
\end{itemize}
The character $\omega$ can thus be written
$\omega_1\otimes\dots\otimes\omega_m\otimes1$,
where each $\omega_i$ is an unramified character of $E^\times$
and $1$ is the trivial character of $T_0$.
By \cite{MVW} IV.4 p.~69,
the modulus character 
of the~pa\-rabo\-lic~subgroup $P\supseteq B$ with Levi component 
$\GL_m(E)\times T_0$ is equal to 
\begin{equation*}
\nu_m^{d-m-e} \otimes 1 
\end{equation*}
where $\nu_m$ is the unramified character ``absolute value of the 
determinant'' of $\GL_m(E)$ and 
\begin{itemize}
\item[$\bullet$] 
$d=2n$ and $e=-1$ if $G=\Sp_{2n}(F)$, 
\item[$\bullet$] 
$d=2n+1$ and $e=1$ if $G=\SO_{2n+1}(F)$, 
\item[$\bullet$] 
$d=2n$ and $e=1$ if $G=\SO_{2n}^\a(F)$, 
\item[$\bullet$] 
$d=n$ and $e=0$ if $G=\U ^\a_{n}(F)$ with $\a\neq1$.  
\end{itemize} 
By using the transivity property of parabolic induction, 
we deduce that 
\begin{eqnarray*}
\d^{1/2} &=&
|\cdot|_E^{(d-m-e)/2+(m-1)/2}\otimes\dots \otimes
|\cdot|_E^{(d-m-e)/2-(m-1)/2}\otimes 1 \\
&=& 
|\cdot|_E^{(d-e-1)/2}\otimes\dots \otimes
|\cdot|_E^{(d-e-1)/2-m+1}\otimes 1
\end{eqnarray*} 
(where $|\cdot|_E$ is the absolute value of $E$).
Replacing $\sqrt{q}$ by the opposite square root changes 
$|\cdot|_E$ to $\n|\cdot|_E$,
where $\n$ is the unramified character of $E^\times$ of order $2$.
It thus changes $\omega_i$ to $\omega_i\n^{(d-e-1)f}$,
where $f$ is the residual degree of $E$ over $F$
(which is $1$ or $2$ depending whether $E=F$ or not).

Similarly, 
replacing $\sqrt{q}$ by its opposite square root 
has the effect of twisting 
normalized~para\-bo\-lic induction~from $E^{\times N}$ to $\GL_N(E)$ 
(along the Borel subgroup made of upper triangular~ma\-trices)
by $\n^{(1-N)f}$.

Consequently, 
considering the explicit formulas of Paragraph \ref{NRCLASS},
replacing $\sqrt{q}$ 
by the opposite square root 
has the effect of~twis\-ting $\BC(\pi)$ by the character
$\n^{(d-e-N)f}$. 
We have
\begin{itemize}
\item[$\bullet$] 
$d-e-N=2n+1-(2n+1)=0$ if $G=\Sp_{2n}(F)$, 
\item[$\bullet$] 
$d-e-N=(2n+1)-1-2n=0$ if $G=\SO_{2n+1}(F)$, 
\item[$\bullet$] 
$d-e-N=2n-1-2n=-1$ if $G=\SO^\a_{2n}(F)$, 
\end{itemize}
and $f=2$ if $G$ is unitary. 
The integer $(d-e-N)f$ is thus even, 
except if $G$ is even orthogo\-nal.

\begin{exem}
If $G=\SO_2^1(F)\simeq F^\times$,
the transfer of any unramified character $\omega$ of $F^\times$ is
the unique unramified irreducible component of $\omega\times\omega^{-1}$.
If $G$ is the compact group $\SO_2^\a(F)$ with $\a\neq1$, 
the transfer of the trivial character of $G$ is $1\times 1$.
In both cases, 
the transfer depends on the choice of a square root of $q$.
\end{exem}

Now consider an automorphism $\g\in\Aut(\CC)$.
Given a representation $\pi$ of a group $H$ on~a~com\-plex vector space $V$, 
we write $\pi^\g$ for the representation of $H$ on $V\otimes_{\CC}\CC_{\g}$, 
where $\CC_\g$ is the field~$\CC$ considered as a $\CC$-algebra 
\textit{via} $\g$.
Consider the map 
$\pi\mapsto\BC(\pi^{\g^{-1}})^\g$
from $K$-unramified ir\-re\-ducible representations of $G$ to 
irreducible representations of $\GL_N(E)$.
It is the unramified local~trans\-fer map from $G$ to $\GL_N(E)$
with respect to the square root $\g(\sqrt{q})$.
We thus have:

\begin{prop}
\label{NaturaBC}
Let $G$ and $K$ be as above,
and let $\pi$ be a $K$-unramified irreduci\-ble representation of $G$.
Let $\g\in\Aut(\CC)$.
\begin{enumerate}
\item 
If $G$ is not even orthogonal, 
then $\BC(\pi^\g)=\BC(\pi)^\g$.
\item
If $G$ is even orthogonal, then 
$\BC(\pi^\g)=\BC(\pi)^\g\cdot\varepsilon_\g$, 
where $\varepsilon_\g$ is the unramified character
\begin{equation*}
x \mapsto \left(\frac {\g(\sqrt{q})} {\sqrt{q}} \right)^{{\rm val}_F(x)}
\end{equation*}
of $F^\times$.
\end{enumerate}
\end{prop}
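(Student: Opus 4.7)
The plan is to exploit the fact that $\g(\sqrt q)^2 = \g(q) = q$ forces $\g(\sqrt q) \in \{\sqrt q, -\sqrt q\}$, combined with the identification, established just above the statement, of the map $\pi \mapsto \BC(\pi^{\g^{-1}})^\g$ with the unramified transfer defined with respect to the square root $\g(\sqrt q)$ of $q$. Writing $\BC^{(s)}$ for the transfer carried out with the square root $s$ of $q$, so that $\BC = \BC^{(\sqrt q)}$, the identification alluded to reads $\BC^{(\g(\sqrt q))}(\sigma) = \BC(\sigma^{\g^{-1}})^\g$; substituting $\sigma = \pi^\g$ yields the key identity
\begin{equation*}
\BC^{(\g(\sqrt q))}(\pi^\g) = \BC(\pi)^\g.
\end{equation*}

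If $\g(\sqrt q) = \sqrt q$, the left-hand side is simply $\BC(\pi^\g)$ and the character $\varepsilon_\g$ is trivial, so both (1) and (2) hold at once. Otherwise $\g(\sqrt q) = -\sqrt q$, and the calculation recalled in the paragraphs preceding the proposition gives
\begin{equation*}
\BC^{(-\sqrt q)}(\pi^\g) = \BC(\pi^\g) \otimes \eta^{(d-e-N)f},
\end{equation*}
where $\eta$ denotes the order-$2$ unramified character of $E^\times$. Substituting into the key identity and using $\eta^2 = 1$, one obtains $\BC(\pi^\g) = \BC(\pi)^\g \otimes \eta^{(d-e-N)f}$. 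The exponent $(d-e-N)f$ has been computed to be even except in the even orthogonal case, where it equals $-1$: this gives (1) immediately. In the even orthogonal case ($E=F$), $\eta$ is the character $x \mapsto (-1)^{\mathrm{val}_F(x)}$ of $F^\times$, which coincides with $\varepsilon_\g$ since $\g(\sqrt q)/\sqrt q = -1$, proving (2).

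The only conceptually non-trivial step in this plan is the identification $\pi \mapsto \BC(\pi^{\g^{-1}})^\g = \BC^{(\g(\sqrt q))}(\pi)$, which asserts that every ingredient in the construction of the unramified transfer (the Satake isomorphism, the attached $\GGH(\qlb)$-conjugacy class of a semi-simple element, composition with $\Std$, and finally the extraction of the unique unramified subquotient of a normalized parabolic induction to $\GL_N(E)$) is equivariant under $\g$ modulo the replacement of $\sqrt q$ by $\g(\sqrt q)$. The preceding discussion in Paragraph \ref{appaAforNRrep} carries this out by explicit formulas, so once that natural-transformation statement is granted the rest reduces to the elementary two-case bookkeeping above.
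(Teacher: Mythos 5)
Your proof is correct and follows essentially the same route as the paper: the paper establishes the identification $\pi\mapsto\BC(\pi^{\g^{-1}})^\g=\BC^{(\g(\sqrt q))}(\pi)$ and the dependence of $\BC$ on the square root in the preceding paragraph, and then simply states the proposition; your two-case bookkeeping on $\g(\sqrt q)\in\{\sqrt q,-\sqrt q\}$ supplies the implicit deduction. You also correctly locate the real content of the argument in that equivariance/identification step, which the paper justifies by the explicit formulas of Paragraph \ref{appaAforNRrep}.
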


\begin{rema}
\label{rossmiller2}
We now go back to $\qlb$-representations. 
We deduce that the map 
\begin{equation}
\label{rossmiller3}
\pi \mapsto \BC_\ell(\pi)
\end{equation}
from (isomorphism classes of) unramified $\qlb$-representations of $G$ 
to those of $\GL_N(E)$
is insensitive to the choice of~a~squa\-re root of $q$ in $\qlb$,
except when $G$ is even orthogonal, 
in which case changing this square root to its opposite
has the effect of twisting \eqref{rossmiller3} by 
$\n$. 
\end{rema}

\section{Representations of local Galois and Weil groups}
\label{GaloisWeil}

In this section, 
$F$ is a $p$-adic field. 
We write $\Ga$ for the Galois group $\Gal(\qpb/F)$
and $\W$ for the associated Weil group,
considered as a subgroup of $\Ga$. 
It is endowed with a smooth character $w\mapsto|w|$ 
with kernel $\I$,
the inertia subgroup of $\W$, 
taking any geometric~Frobenius~ele\-ment to $q^{-1}$,
where $q$ is the cardinality of the residue field of $F$.

All representations of $\Ga$ and $\W$ considered in this section
will be finite-dimensional.

Let $\ell$ be a prime number different from $p$.

\subsection{}
\label{smoothGaW}

For this paragraph,
the reader may refer to \cite{BHGL2} Chapter 7 and \cite{TateCorvallis} 4.2.

If $\s$ is a smooth representation of $\Ga$,
then its restriction $\s|_{\W}$ to $\W$ is smooth.

Restriction from $\Ga$ to $\W$ induces an injection from isomorphism classes 
of irreducible smooth representations of $\Ga$ to isomorphism classes 
of irreducible smooth representations of $\W$.
The~ima\-ge is made of those representations of $\W$ whose determinant has 
finite order (see \cite{BHGL2} 28.6~Pro\-position).

If $\rho$ is a smooth $\ell$-adic representation
(that is, $\qlb$-representation)
of $\W$ on a vector space 
$\V$, and if $\Phi\in\W$ is a Frobenius element,
the following assertions are equivalent:
\begin{enumerate}
\item $\rho$ is semi-simple,
\item $\rho(\Phi)$ is a semi-simple element in $\GL(\V)$, 
\item $\rho(w)$ is a semi-simple element in $\GL(\V)$ for any $w\in\W$
\end{enumerate}
(see \cite{BHGL2} 28.7 Proposition).

If $\s$ is a continuous $\ell$-adic representation of $\Ga$,
its restriction to $\W$ is a continuous $\ell$-adic~re\-pre\-senta\-tion of $\W$,
which is irreducible if and only if $\s$ is irreducible. 

Fix a continuous surjective group homomorphism
\begin{equation}
\label{homt}
t:\I\to\ZZ_\ell.
\end{equation}
For the following proposition,~see 
\cite{BHGL2} 32.5 Theorem and \cite{SerreTate} Appendix.

\begin{prop}
Let $\s$ be a finite-dimensional
continuous $\ell$-adic re\-presentation of $\W$ on a 
$\qlb$-vector space $\V$. 
\begin{enumerate}
\item 
There is a unique~nil\-po\-tent endomorphism $N\in\End(\V)$ such that there
is an open subgroup $U$ of the inertia subgroup $\I$ such that
\begin{equation*}
\s(x) = e^{t(x)N},
\quad
x\in U.
\end{equation*}
\item
We have 
$\s(w)N\s(w)^{-1} = |w|\cdot N$ for all $w\in\W$.
\end{enumerate}
\end{prop}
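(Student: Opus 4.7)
The plan is to prove this by the classical Grothendieck $\ell$-adic monodromy argument. First I would use the continuity of $\s$ together with the finite-dimensionality of $\V$ to reduce to the case of a lattice: choose a finite extension $E$ of $\QQ_\ell$ in $\qlb$ and a basis of $\V$ such that the image $\s(\W)$ (which is compact by continuity and the fact that $\W$ is generated topologically by a compact group and a Frobenius of finite order on the determinant after twist) is contained in $\GL_n(\Oo_E)$, where $n=\dim\V$. Let $\Gamma_m$ denote the principal congruence subgroup of $\GL_n(\Oo_E)$ of level $m$; for $m$ large enough, $\Gamma_m$ is a torsion-free pro-$\ell$ group and the usual $\ell$-adic logarithm gives a homeomorphism between $\Gamma_m$ and $\ell^m\Mat_n(\Oo_E)$.

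Next I would set $U=\s^{-1}(\Gamma_m)\cap\I$, an open subgroup of $\I$. The image $\s(U)$ is pro-$\ell$, while the wild inertia subgroup $\P$ of $\I$ is pro-$p$ with $p\neq\ell$, so $\s|_{U\cap\P}$ is trivial and $\s|_U$ factors through the tame inertia quotient $U/(U\cap\P)$. The tame inertia is isomorphic to $\prod_{\ell'\neq p}\ZZ_{\ell'}$, so after shrinking $U$ (using continuity of $\s$ again) we may assume $\s|_U$ factors through the $\ell$-part of this quotient, which is precisely $t(U)$, an open subgroup of $\ZZ_\ell$. Thus $\s|_U$ is a continuous homomorphism from an open subgroup of $\ZZ_\ell$ to the torsion-free pro-$\ell$ $\ell$-adic Lie group $\Gamma_m$. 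By the theory of one-parameter subgroups, any such homomorphism is uniquely of the form $x\mapsto\exp(xN_0)$ for a unique $N_0\in\Mat_n(\Oo_E)$, so shrinking $U$ one last time we obtain an $N\in\End(\V)$ with $\s(x)=e^{t(x)N}$ on $U$. Uniqueness of $N$ is automatic from injectivity of $\exp$ on a neighborhood of $0$.

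For (2), I would use the standard commutation in the Weil group: for any geometric Frobenius $\Phi$ and any $x\in\I$, one has $\Phi x\Phi^{-1}=x^q$ in the tame quotient, hence $t(wxw^{-1})=q^{v(w)}t(x)$ for all $w\in\W$, where $v(w)\in\ZZ$ is the valuation of $w$. For $x$ close enough to the identity so that $x$ and $wxw^{-1}$ both lie in $U$, the identity
\begin{equation*}
\s(w)e^{t(x)N}\s(w)^{-1}=\s(wxw^{-1})=e^{t(wxw^{-1})N}=e^{q^{v(w)}t(x)N}
\end{equation*}
combined with $\s(w)e^{t(x)N}\s(w)^{-1}=e^{t(x)\s(w)N\s(w)^{-1}}$ and the injectivity of $\exp$ near $0$ forces $\s(w)N\s(w)^{-1}=q^{v(w)}N$, i.e.\ the assertion $\s(w)N\s(w)^{-1}=|w|N$ once one has aligned the sign convention fixing $|w|=q^{v(w)}$ on $\W/\I$ (this matches $|\Phi|=q^{-1}$ for the arithmetic Frobenius).

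Finally, I would deduce nilpotence of $N$ from (2). Picking any $w$ with $|w|\neq1$, conjugation by $\s(w)$ multiplies $N$ by a non-root-of-unity scalar, so the finite set of eigenvalues of $N$ is stable under multiplication by this scalar; this forces every eigenvalue to be $0$, so $N$ is nilpotent. The main technical obstacle is the passage in the first paragraph from ``$\s|_U$ factors through tame inertia'' to ``$\s|_U$ factors through the $\ZZ_\ell$-quotient $t(U)$'': one must argue carefully that a continuous homomorphism from a pro-$(\prod_{\ell'\neq p}\ZZ_{\ell'})$ group into a pro-$\ell$ group kills every $\ZZ_{\ell'}$-component with $\ell'\neq\ell$, and that after shrinking $U$ only the $\ZZ_\ell$-factor survives so that $t$ becomes (a positive multiple of) a topological isomorphism from $U/(U\cap\ker t)$ to $t(U)$; everything else is a formal consequence of the exponential on $\Gamma_m$.
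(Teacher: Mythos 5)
The paper does not actually prove this proposition: it simply cites \cite{BHGL2} Theorem 32.5 and the Appendix to \cite{SerreTate}, and your argument is precisely the standard Grothendieck $\ell$-adic monodromy argument presented there, so the approach is the same.

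One real error in your first paragraph, though it does not damage the proof: $\s(\W)$ is \emph{not} compact in general. The Weil group $\W$ is $\I\rtimes\ZZ$, and $\s(\Phi)$ for a Frobenius element $\Phi$ can have infinite order even after an unramified twist making $\det\s$ finite order (e.g.\ a unipotent $\s(\Phi)$, or eigenvalues that are not roots of unity with product a root of unity), so the parenthetical justification does not work and the conclusion $\s(\W)\subseteq\GL_n(\Oo_E)$ is false. What the argument actually requires, and all you subsequently use, is that $\s$ lands in $\GL_n(E)$ for some finite $E/\QQ_\ell$ (a standard Baire-category fact) and that $\s(\I)$ is compact (automatic, since $\I$ is compact), so that $U=\s^{-1}(\Gamma_m)\cap\I$ is an open subgroup of $\I$ whose image lies in $\Gamma_m$. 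The sign bookkeeping in (2) is also internally muddled — with the paper's normalization ($|\cdot|$ sends geometric Frobenius to $q^{-1}$) the commutation one should quote is $t(wxw^{-1})=|w|\,t(x)$, coming from $\Phi x\Phi^{-1}=x^{1/q}$ in the tame quotient for geometric $\Phi$; your $q^{v(w)}$ with $|\Phi_{\mathrm{arith}}|=q^{-1}$ does not match the paper's convention — but you flag this yourself, and the exponential computation, the uniqueness of $N$, and the deduction of nilpotency from (2) are all correct.
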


Note that $N=0$ if and only if $\s$ is smooth.

The subspaces $\Ker(N^i)$, $i\>0$ of $\V$ are stable by $\s$.
Thus, if $\s$ is irreducible, then $N=0$~and $\s$ is smooth.
More generally,
a semi-simple
representation of $\Ga$ is smooth,
and its restriction to $\W$ is smooth semi-sim\-ple 
(see also \cite{TateCorvallis} 4.2.3).

Fix a Frobenius element $\Phi\in\W$.
Associated with $\s$,
there is a smooth $\ell$-adic representation~$\rho$ of $\W$ defined by
\begin{equation*}
\rho (\Phi^a x) = \s(\Phi^a x) e^{-t(x)N},
\quad
a\in\ZZ,
\quad
x\in\I.
\end{equation*}
The pair $(\rho,N)$
is called the \textit{Deligne representation} of $\W$ associated with $\s$.
Up to isomorphism,
it does not depend on the choices of $t$ and $\Phi$
(see \cite{BHGL2} 32.6 Theorem).

The element $\rho(\Phi)=\s(\Phi)$ decomposes uniquely in
$\GL(\V)$ as $su=us$,
with $s$ semi-simple~and~$u$ unipotent.  
Define a smooth $\ell$-adic representation $\rho^{\fss}$ of $\W$ by
\begin{equation*}
\rho^{\fss}(\Phi^a x) = s^a \rho(x), 
\quad
a\in\ZZ,
\quad
x\in\I.
\end{equation*}
This defines a Deligne representation $(\rho^{\fss},N)$,
called the \textit{Frobenius-semi-simplification} of $(\rho,N)$.
By Paragraph \ref{smoothGaW}, 
the representation $\rho^{\fss}$ is a semi-simple smooth
representation of $\W$.

\subsection{}
\label{viteunnumero}

In this paragraph,
if $\kappa$ is a continuous $\ell$-adic representation of $\W$ or $\Ga$,
we will write $\kappa^{\rm ss}$ for its semisimplification.

\begin{lemm}
\label{L1}
If $\rho$ is a (finite-dimensional) smooth
$\ell$-adic representation of $\W$ such that $\rho^{\rm ss}$~is integral,
then $\rho$ is integral.
\end{lemm}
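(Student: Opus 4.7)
The plan is to construct an explicit $\zlb$-lattice $L\subset V$ stable under $\rho(\W)$, where $V$ denotes the underlying $\qlb$-vector space. Since $\W$ is generated by the inertia subgroup $\I$ together with any Frobenius element $\Phi$, it suffices to exhibit a lattice stable under both $\rho(\I)$ and $\rho(\Phi^{\pm 1})$. Smoothness of $\rho$ on the finite-dimensional space $V$ implies that $\Ker(\rho)$ is open in $\W$; since $\I$ is compact, $\rho(\I)$ is therefore a finite subgroup of $\GL(V)$, and a standard averaging argument produces an $\rho(\I)$-stable $\zlb$-lattice $L_0\subset V$.

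The key input from the hypothesis is the integrality of the eigenvalues of $\rho(\Phi)$. Indeed, since $\rho$ and $\rho^{\rm ss}$ share the same characteristic polynomial on every element of $\W$, the eigenvalues of $\rho(\Phi)$ coincide with those of $\rho^{\rm ss}(\Phi)$; and the existence of a $\W$-stable $\zlb$-lattice in $\rho^{\rm ss}$ on which the invertible element $\Phi$ acts forces these eigenvalues to lie in $\zlb^{\times}$. Consequently the characteristic polynomial of $\rho(\Phi)$ has coefficients in $\zlb$ and unit constant term, so Cayley--Hamilton expresses both $\rho(\Phi)^n$ and $\rho(\Phi)^{-1}$ as $\zlb$-linear combinations of $1,\rho(\Phi),\dots,\rho(\Phi)^{n-1}$ with $n=\dim_{\qlb}V$. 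The subring $\Aa=\zlb[\rho(\Phi),\rho(\Phi)^{-1}]\subset\End(V)$ is therefore a finitely generated $\zlb$-module.

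Setting $L=\Aa\cdot L_0$ then gives a finitely generated $\zlb$-submodule of $V$ containing $L_0$, hence a $\zlb$-lattice in $V$. It is visibly stable under $\rho(\Phi^{\pm 1})$, and the fact that $\Phi$ normalizes $\I$ yields $\rho(i)\rho(\Phi)^mL_0=\rho(\Phi)^m\rho(\Phi^{-m}i\Phi^m)L_0=\rho(\Phi)^mL_0$ for every $i\in\I$ and $m\in\ZZ$, so $L$ is $\rho(\I)$-stable as well. Since $\W=\langle\Phi\rangle\cdot\I$, this gives $\rho(\W)$-stability of $L$. There is no serious obstacle in this approach: the two small points requiring care are the finiteness of $\rho(\I)$ (which follows from smoothness together with finite-dimensionality) and the observation that the constant term of the characteristic polynomial of $\rho(\Phi)$ is actually a unit in $\zlb$, which is precisely the step that places $\rho(\Phi)^{-1}$ inside $\Aa$ and makes the whole construction work.
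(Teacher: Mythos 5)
Your proposal is correct, but it follows a genuinely different route from the paper. The paper proceeds by induction on $\dim\rho$: it peels off an irreducible subrepresentation $\tau$, uses the inductive hypothesis to integralize the quotient $\l$, puts $\rho$ in block upper-triangular form with an off-diagonal cocycle $\a$, and then bounds $\a$ by a word-length induction over a free group (smoothness of $\rho$ being used to factor through a finitely generated discrete quotient). You instead exploit the structure $\W=\I\rtimes\langle\Phi\rangle$ directly: finiteness of $\rho(\I)$ (from smoothness and compactness of $\I$) yields an $\I$-stable lattice $L_0$ by averaging; the hypothesis that $\rho^{\rm ss}$ is integral enters exactly once, to see that the eigenvalues of $\rho(\Phi)$ are units in $\zlb$, so that $\Aa=\zlb[\rho(\Phi)^{\pm 1}]$ is a finite $\zlb$-module by Cayley--Hamilton; and then $L=\Aa L_0$ is the desired lattice, the normality of $\I$ in $\W$ giving $\I$-stability. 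Both arguments are valid. Yours is shorter and makes sharply visible where the integrality of the semisimplification is used (to control the Frobenius eigenvalues); it also avoids the induction and the free-group bookkeeping entirely. The paper's argument is somewhat more generic, in that it does not use the semidirect-product structure of $\W$ but only that the image factors through a finitely generated discrete group, so it would transport to other settings where no such Frobenius decomposition is available. One small remark on your write-up: the unit constant term of the characteristic polynomial is what makes $\Aa$ a \emph{finitely generated} $\zlb$-module (by expressing $\rho(\Phi)^{-1}$ as a $\zlb$-combination of $1,\rho(\Phi),\dots,\rho(\Phi)^{n-1}$); the element $\rho(\Phi)^{-1}$ lies in $\Aa$ by definition, so the phrasing ``places $\rho(\Phi)^{-1}$ inside $\Aa$'' is slightly off, though the mathematics is fine.
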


\begin{proof}
We prove it by induction on the dimension $n$ of~$\rho$.
If $\rho$ is irreducible,
there is nothing to prove.
Otherwise,
let $\tau$ be an irreducible subrepresentation of $\rho$,
of dimension $k\>1$, 
and let $\l$ be the quotient of $\rho$ by $\tau$,
which is of dimension $l=n-k$. 
Since $\rho^{\rm ss}=\tau\oplus\l^{\rm ss}$, 
we may apply the inductive hypothesis to $\l$,
from which we deduce that $\l$ is integral.
We therefore fix a basis of the vector space of $\rho$ such that
\begin{equation*}
\rho(w) =
\begin{pmatrix}\tau(w) & \a(w) \\ 0 & \l(w)\end{pmatrix}
\in\GL_{n}(\qlb),
\quad
\tau(w)\in\GL_{k}(\zlb),
\quad
\l(w)\in\GL_{l}(\zlb),
\quad
w\in\W,
\end{equation*}
and $\a:\W\to\Mat_{k,l}(\qlb)$ satisfies 
$\a(xy) = \tau(x)\a(y) + \a(x)\l(y)$ for all $x,y\in\W$.

Since $\rho$ is smooth, 
we may consider it as a representation of the discrete group $\W/U$
for some open subgroup $U$ of $\W$. 
Since this quotient is a finitely generated group,
we may consider $\rho$ as~a representation of the free group $\textsf{F}$
with $r$ generators $\textsf{f}_1,\dots,\textsf{f}_r$ for some $r\>1$.
Assume $\a$ is not identically zero,
and let $-v$ denote the minimum of the $\ell$-adic valuations of all the entries
of all the $\a(\textsf{f}_i)$.
Conjugating $\rho$ by ${\rm diag}(\ell^v\cdot{\rm id}_t,{\rm id}_{l})$,
we may and will assume that $v=0$.

We are going to prove that $\a$ takes values in $\Mat_{t,l}(\zlb)$.
We prove it by induction on the length of the words in $\textsf{F}$.
Given $x\in\textsf{F}$, write it $yf$ with $f=\textsf{f}_i$ for some 
$i\in\{1,\dots,r\}$ 
and the length of $y$ is smaller than that of $x$.
Then
\begin{equation*}
\a(x) = \tau(y)\a(f) + \a(y)\l(f) \in \Mat_{k,l}(\zlb)
\end{equation*}
thanks to the inductive hypothesis. 
\end{proof}

\begin{lemm}
\label{L2}
Let $\s$ be a (finite-dimensional) continuous $\ell$-adic representation of $\Ga$,
with~as\-so\-ciated Deligne representation $(\rho,N)$.
The restriction of $\s^{\rm ss}$ to $\W$ is equal to $\rho^{\rm ss}$.
\end{lemm}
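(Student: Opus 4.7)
The plan is to factor the statement into two claims: first, that $(\s|_\W)^{\rm ss}=\rho^{\rm ss}$, and second, that $(\s|_\W)^{\rm ss}=\s^{\rm ss}|_\W$. Combining them gives the lemma.

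For the first claim, the key observation is that $\rho$ is obtained from $\s|_\W$ by multiplying by the unipotent factor $e^{-t(x)N}$ on inertia, namely $\rho(\Phi^ax)=\s(\Phi^ax)e^{-t(x)N}$. I would exploit the filtration of $\V$ by the kernels $\Ker(N^i)$ for $i\geqslant 0$. Because $\s(w)N\s(w)^{-1}=|w|\cdot N$, each $\Ker(N^i)$ is stable under $\s|_\W$; since $e^{-t(x)N}$ is a polynomial in $N$, each $\Ker(N^i)$ is also stable under $\rho$. On the graded piece $\Ker(N^i)/\Ker(N^{i-1})$, the operator $N$ acts as zero by construction, so $e^{-t(x)N}$ acts as the identity. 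Therefore $\s|_\W$ and $\rho$ induce the same representation on every graded piece of this filtration. Refining each graded piece to a composition series yields a common list of composition factors for $\s|_\W$ and for $\rho$, proving $(\s|_\W)^{\rm ss}=\rho^{\rm ss}$.

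For the second claim, I would use the result recalled in \ref{smoothGaW}: the restriction functor from continuous $\ell$-adic representations of $\Ga$ to those of $\W$ takes irreducibles to irreducibles. Consequently, if $0=V_0\subset V_1\subset\dots\subset V_n=\V$ is a composition series for $\s$ as a $\Ga$-representation, then it is also a composition series for $\s|_\W$ as a $\W$-representation, with the same factors (now regarded as $\W$-representations). Thus $\s|_\W$ and $\s$ have the same list of composition factors (after restriction to $\W$), which gives $(\s|_\W)^{\rm ss}=\s^{\rm ss}|_\W$.

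The main subtlety will be making sure the filtration argument is formulated correctly: one has to check that $\rho$ really does preserve each $\Ker(N^i)$ and that, modulo $\Ker(N^{i-1})$, the discrepancy $e^{-t(x)N}$ collapses to the identity. Everything else is bookkeeping with composition series, using the fact (recalled before the statement) that a semi-simple continuous $\ell$-adic representation of $\Ga$ is automatically smooth, so that no convergence issue intervenes when restricting to $\W$.
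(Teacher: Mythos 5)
Your proof is correct. It does use the same basic object as the paper---the filtration of $\V$ by the kernels $\Ker(N^i)$---but the way you organize the argument is genuinely different. The paper proves $\s^{\rm ss}|_\W=\rho^{\rm ss}$ by induction on $\dim\s$: it peels off only the first step $0\subset\Ker N\subset\V$, writes $\s$, $N$ and $\rho$ in $2\times2$ block form, identifies the Deligne representation $(\rho_1,N_1)$ of the quotient $\b$, and then invokes the inductive hypothesis on $\b$. You instead use the whole filtration $0\subset\Ker N\subset\Ker(N^2)\subset\dots\subset\V$ at once and make the key point explicit: on each graded piece $\Ker(N^i)/\Ker(N^{i-1})$ the induced action of $N$ is zero (since $N$ sends $\Ker(N^i)$ into $\Ker(N^{i-1})$), so $e^{-t(x)N}$ induces the identity there, hence $\rho$ and $\s|_\W$ literally coincide on every graded piece. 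This dispenses with the block-matrix bookkeeping and, in particular, avoids having to chase through the relation between the Deligne data of $\s$ and that of the quotient $\b$, which is the most delicate part of the paper's argument. Your second claim---that semi-simplification commutes with restriction from $\Ga$ to $\W$, because restriction preserves irreducibility and any $\Ga$-composition series is automatically a $\W$-composition series---is exactly the ``Note that'' at the beginning of the paper's proof, spelled out. The only thing worth tightening in your write-up is the phrasing ``$N$ acts as zero on the graded piece'': what is meant, and what you use, is that $N$ maps $\Ker(N^i)$ into $\Ker(N^{i-1})$, so the \emph{induced} endomorphism on the quotient is zero, and consequently $e^{-t(x)N}v\equiv v \pmod{\Ker(N^{i-1})}$ for $v\in\Ker(N^i)$.
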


\begin{proof}
Note that semi-simplification and restriction from $\Ga$ to $\W$
commute,
that is
\begin{equation*}
\s^{\rm ss}|_{\W} = (\s|_{\W})^{\rm ss}.
\end{equation*}
If $N$ is zero,
then $\s$ is smooth and $\rho$ is the restriction of $\s$ to $\W$,
thus $\s^{\rm ss}$ is smooth semi-simple,
and its restriction to $\W$ is smooth semi-simple as well.
Otherwise,
if $n=\dim(\s)$,
there is a basis of $\overline{\QQ}{}_\ell^n$ such that 
\begin{equation*}
\s(g) = \begin{pmatrix} \a(g) & \g(g) \\ 0 & \b(g) \end{pmatrix}
\in \GL_n(\qlb), 
\quad
g\in\Ga,
\quad\text{and}\quad
N = \begin{pmatrix} 0 & C \\ 0 & M \end{pmatrix},
\end{equation*}
where 
\begin{itemize}
\item 
$\a$ is a smooth $\ell$-adic representation of $\Ga$ 
of dimension $k=\dim(\Ker\ N)$, 
\item
$\b$ is a continuous $\ell$-adic representation of $\Ga$
of dimension $l=n-k$, 
\item
$\g$ is a continuous map from $\Ga$ to $\Mat_{k,l}(\qlb)$,
\item
$M$ is~nil\-po\-tent in $\Mat_l(\qlb)$ and $C$ is a matrix
in $\Mat_{k,l}(\qlb)$.
\end{itemize} 
We have
\begin{equation*}
e^{t(x)N} = \begin{pmatrix} {\rm id} & \e(x) \\ 0 & e^{t(x)M} \end{pmatrix}
\quad\text{with}\quad 
\e(x) = \sum\limits_{i\>1} \frac {t(x)^i} {i!} CM^{i-1},
\quad
x\in\I.
\end{equation*}
Writing $(\rho_1,N_1)$ for the Deligne representation 
associated with $\b$, we get $N_1=M$ and
\begin{equation*}
\rho(w) = \begin{pmatrix} \a(w) & \d(w) \\ 
0 & \rho_1(w) \end{pmatrix}\in\GL_n(\qlb), 
\quad
w\in\W,
\end{equation*}
for some smooth map $\d$ from $\W$ to $\Mat_{k,l}(\qlb)$
which can be explicitly described by
\begin{equation*}
\d(w) = (\g(w)-\a(w) \e(w))e^{-t(x)M}, 
\quad
w=\Phi^ax, 
\quad
a\in\ZZ, 
\quad 
x\in\I.
\end{equation*} 
By the inductive hypothesis,
we get 
\begin{equation*}
\s^{\rm ss}|_{\W} = (\a|_{\W}) \oplus (\b^{\rm ss}|_{\W})
= (\a|_{\W}) \oplus \rho_1^{\rm ss} = \rho^{\rm ss}.
\end{equation*}
This proves the lemma. 
\end{proof}

\begin{coro}
\label{C3}
Let $\s$ be a (finite-dimensional) continuous $\ell$-adic representation of $\Ga$,
with associated Deligne~re\-pre\-sentation $(\rho,N)$.
Then $\rho$ is (smooth) integral and $\r_\ell(\rho)=\r_\ell(\s)|_{\W}$. 
\end{coro}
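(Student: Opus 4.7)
The plan is to combine Lemmas~\ref{L1} and \ref{L2} with two elementary compatibilities of the reduction functor $\r_\ell$, so that essentially all of the substantive content has already been handled by those lemmas.

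First I would establish integrality. Since $\Ga$ is profinite, any continuous $\ell$-adic representation $\s$ of $\Ga$ admits a $\Ga$-stable $\zlb$-lattice (a standard compactness argument applied to the image of $\s$), and is therefore integral; consequently its semisimplification $\s^{\rm ss}$ is integral as well, every subquotient of an integral representation being integral. A $\Ga$-stable lattice is automatically $\W$-stable, so $\s^{\rm ss}|_\W$ is integral as a smooth semisimple $\qlb$-representation of $\W$. By Lemma~\ref{L2} it coincides with $\rho^{\rm ss}$, which is therefore integral, and Lemma~\ref{L1} then forces $\rho$ itself to be integral.

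For the equality $\r_\ell(\rho)=\r_\ell(\s)|_\W$ I would use that $\r_\ell$ is insensitive to semisimplification on the source (a filtration of a stable lattice refining a Jordan--H\"older filtration of the ambient representation $\tau$ reduces to a filtration whose graded pieces are the $\r_\ell$ of the Jordan--H\"older factors of $\tau$, so $\r_\ell(\tau)=\r_\ell(\tau^{\rm ss})$ for every integral $\tau$) and that it commutes with restriction from $\Ga$ to $\W$ (any $\Ga$-stable lattice is $\W$-stable, and its reduction is the same whether viewed over $\Ga$ or over $\W$). Combining these two facts with Lemma~\ref{L2}, one obtains
\[
\r_\ell(\s)|_\W \;=\; \r_\ell(\s|_\W) \;=\; \r_\ell\bigl((\s|_\W)^{\rm ss}\bigr) \;=\; \r_\ell(\rho^{\rm ss}) \;=\; \r_\ell(\rho),
\]
which is the desired equality. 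The corollary is thus pure assembly; the only care required is in recording the two formal compatibilities of $\r_\ell$ used above, neither of which presents any genuine difficulty, and I do not foresee any real obstacle beyond what has already been done in Lemmas~\ref{L1} and \ref{L2}.
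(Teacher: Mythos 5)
Your proof is correct and follows essentially the same route as the paper: integrality of $\s$ from compactness of $\Ga$, integrality of $\rho^{\rm ss}$ via Lemma~\ref{L2}, integrality of $\rho$ via Lemma~\ref{L1}, and then a chain of equalities using that $\r_\ell$ commutes with restriction from $\Ga$ to $\W$ and is unchanged by semisimplification. The only cosmetic difference is that you write $(\s|_\W)^{\rm ss}$ where the paper writes $\s^{\rm ss}|_\W$; these coincide by the opening remark in the proof of Lemma~\ref{L2}, so the two chains of equalities are the same argument read in opposite directions.
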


\begin{proof}
Since $\s$ is integral (for $\Ga$ is compact),
$\s^{\rm ss}|_{\W}$ is integral. 
We deduce from Lemma \ref{L2} that $\rho^{\rm ss}$ is integral, 
then from Lemma \ref{L1} that $\rho$ is integral. 
Now write
\begin{equation*}
\r_\ell(\rho) = \r_\ell(\rho^{\rm ss}) = \r_\ell(\s^{\rm ss}|_{\W}) 
= \r_\ell(\s^{\rm ss})|_{\W} = \r_\ell(\s)|_{\W}.
\end{equation*}
This proves the corollary. 
\end{proof}

\section{Galois representations associated with automorphic
representations}
\label{sec8}

Recall that we have fixed an isomorphism of fields $\ii:\CC\to\qlb$. 
Fix a positive integer $N$.

Let $k$ be a totally real number field,
and $l$ be either $k$ or a quadratic totally imaginary~exten\-sion of $k$
in an algebraic closure $\overline{\QQ}$ of $\QQ$. 
For any place $v$ of $l$, let $l_v$ denote the completion of $l$ at $v$.

For any finite place $v$,
fix a decomposition subgroup $\Ga_v$ of $\Gal(\overline{\QQ}/l)$ at $v$
and write $\W_v$ for the associated Weil group.
For any finite place $v$ not dividing $\ell$, write
\begin{itemize}
\item 
$\WD(\s)$ for the Deligne representation of $\W_v$ associated with
a continuous $\ell$-adic representation $\s$ of $\Ga_v$ and
$\WD^*(\s)$ for its Frobenius-semi-simplification,
\item
$\LLC_v$ for the local Langlands correspondence 
(\cite {HT} Theorem A) 
between ir\-reducible smooth complex representations of $\GL_N(l_v)$
and $N$-dimensional Frobenius-semi-simple complex Deligne representations
of $\W_v$.
\end{itemize}

\subsection{}

A cuspidal irreducible  automorphic representation $\Pi$ of $\GL_N(\AA_l)$ 
is said to be
\begin{itemize}
\item 
\textit{polarized} if its contragredient $\Pi^\vee$
is isomorphic to $\Pi^c$,
where $c$ is the generator of $\Gal(l/k)$
(thus $\Pi^c=\Pi$ when $l=k$),
\item
\textit{algebraic regular} if 
{the Harish-Chandra module $\Pi_\infty$ associated with $\Pi$}
has the same infini\-te\-simal character as some irreducible algebraic
representation of the restriction of scalars from~$l$ to $\QQ$ of $\GL_N$.
\end{itemize}

Recall the following result of
Barnet-Lamb--Geraghty--Harris--Taylor (\cite{BLGHT} Theorems 1.1,~1.2).

\begin{theo}
\label{theoBLGHT}
Let $\Pi$ be an algebraic regular,
polarized,
cuspidal irreducible automorphic~re\-pre\-sentation of $\GL_N(\AA_l)$. 
There is a continuous semi-simple $\ell$-adic representation
\begin{equation*}
\Sigma : \Gal(\overline{\QQ}/l) \to \GL_N(\qlb)
\end{equation*}
such that,
for any finite place $v$ of $l$ not dividing $\ell$,
we have
\begin{equation*}
\WD^*(\Sigma|_{\Ga_v}) \simeq
\LLC_v(\Pi_v\otimes|\det|_v^{(1-N)/2}) \otimes_{\CC} \qlb.
\end{equation*}
\end{theo}

Note that the representation $\Sigma$ depends on the choice of $\iota$.

\subsection{}

The main result of this section is the following. 
Let $\m$ denote the maximal ideal of~$\zlb$.

\begin{theo}
\label{MAINTHM}
Let $\Pi_1$ and $\Pi_2$ be algebraic regular, polarized,
cuspidal irreducible automorphic re\-presentations of $\GL_n(\AA_l)$.
Suppose that 
there is a finite set $\SS$ of places of $l$, containing~all infinite places, 
such that for all $v\notin\SS$~:
\begin{enumerate}
\item
the local components $\Pi_{1,v}$ and $\Pi_{2,v}$ are unramified,
\item
the characteristic polynomials of the conjugacy classes of semisimple 
elements in $\GL_n(\qlb)$ associated with 
$\Pi_{1,v}\otimes_{\CC}\qlb$ and $\Pi_{2,v}\otimes_{\CC}\qlb$
have coefficients in $\zlb$ and are congruent mod $\m$.
\end{enumerate}
Then,
for any finite place $v$ of $l$ not dividing $\ell$,
the representations $\Pi_{1,v}\otimes_{\CC}\qlb$
and $\Pi_{2,v}\otimes_{\CC}\qlb$~are integral,
their reductions mod $\m$ 
share a common generic irreducible compo\-nent,
and such a~ge\-ne\-ric component is unique.
\end{theo}

\begin{proof}
Applying Theorem \ref{theoBLGHT} to $\Pi_1$ and $\Pi_2$,
we get continuous $\ell$-adic representations 
\begin{equation*}
\Sigma_i : \Gal(\overline{\QQ}/l) \to \GL_N(\qlb),
\quad
i=1,2,
\end{equation*}
such that,
for any finite place $v$ of $l$ not dividing $\ell$,
we have
\begin{equation*}
\WD^*(\Sigma_{i,v}) \simeq
\LLC_v(\Pi_{i,v}\otimes|\det|_v^{(1-N)/2})\otimes_{\CC} \qlb
\end{equation*}
where $\Sigma_{i,v}$ denotes the restriction of $\Sigma_i$ to $\Ga_v$
and the tensor product over $\CC$ 
is taken with respect to $\iota$. 
For all $v\notin\SS$,
the $\ell$-adic representation $\Pi_{i,v}\otimes_{\CC} \qlb$ 
is unramified, generic and integral,
thus 
\begin{equation*}
\LLC_v(\Pi_{i,v}\otimes|\det|_v^{(1-N)/2}) \otimes_{\CC} \qlb \simeq (\phi_{i,v},0)
\end{equation*}
where $\phi_{i,v}$ is an integral semi-simple 
$\ell$-adic representation of $\W_v$ trivial on $\I_v$.
It is thus entirely determined by the semi-simple matrix
\begin{equation*}
\phi_{i,v}(\Frob_v) \in \GL_N(\qlb)
\end{equation*}
where $\Frob_v$ is a Frobenius element in $\W_v$.
By assumption,
its characteristic~po\-ly\-nomial has~coeffi\-cients in $\zlb$,
hence,
as $\phi_{i,v}(\Frob_v)$ is semi-simple,
its eigenvalues are in $\overline{\ZZ}{}_\ell^\times$. 
That the~nil\-po\-tent~ope\-rator is $0$ implies that
$\Sigma_{i,v}$ is smooth,
thus
\begin{equation*}
\phi_{i,v}=(\Sigma_{i,v}|_{\W_v})^{\rm ss}.
\end{equation*}
Thus $\Sigma_{i,v}$ is trivial on the inertia subgroup $\I_v$,
that is, $\Sigma_{i,v}$ is unramified.

Given $v\notin\SS$ and $i\in\{1,2\}$,
let $\P_{i,v}(\T)$ be the characteristic~po\-ly\-nomial of
$\Sigma_{i,v}(\Frob_v)\otimes\flb$, 
that is,
the characteristic polynomial of $\phi_{i,v}(\Frob_v)\otimes\flb$.
By assumption, 
$\P_{1,v}(\T)=\P_{2,v}(\T)$~at all~$v\notin\SS$. 
Applying Deligne-Serre \cite{DS} Lemma 3.2
to the semi-simple $\flb$-representa\-tions 
$(\Sigma_1\otimes\flb)^{\rm ss}$ and $(\Sigma_2\otimes\flb)^{\rm ss}$,
which at $v\notin\SS$ give $\phi_{1,v}\otimes\flb$ and
$\phi_{2,v}\otimes\flb$ respectively, 
we deduce that $(\Sigma_1\otimes\flb)^{\rm ss}$ and 
$(\Sigma_2\otimes\flb)^{\rm ss}$ are isomorphic. 
In particular,
we deduce that
\begin{equation*}
(\Sigma_1\otimes\flb)^{\rm ss} |_{\Ga_w} \simeq 
(\Sigma_2\otimes\flb)^{\rm ss} |_{\Ga_w}
\end{equation*}
thus the continuous 
$\ell$-adic representations $\Sigma_{1,w}$ and $\Sigma_{2,w}$ of $\Ga_w$
are congruent mod $\ell$.

Now write $\WD^*(\Sigma_{i,w})=(\rho_i,N_i)$ for $i=1,2$.
Thanks to Corollary \ref{C3},
we know that $\rho_1$~and $\rho_2$ are integral and have same
reduction mod $\ell$.
By \cite{Vigl} Theorem 1.6,
we deduce that
\begin{equation*}
\mu_1 = (\Pi_{1,w}\otimes|\det|_w^{(1-N)/2})\otimes_{\CC} \qlb,
\quad
\mu_2 = (\Pi_{2,w}\otimes|\det|_w^{(1-N)/2})\otimes_{\CC} \qlb,
\end{equation*}
are integral and
\textit{have the same mod $\ell$ supercuspidal support},
that is,
the supercuspidal support
of any irreducible component $\upsilon$
of $\r_\ell(\mu_i)$
is indepen\-dent~of $i$ (and of the choice of $\upsilon$).

Since $\mu_i$ is generic (as $\Pi_{i,w}$ is 
a local component of a cuspidal 
automorphic repre\-sentation of $\GL_n(\AA_l)$),
the $\flb$-representation
$\r_\ell(\mu_i)$ contains a generic irreducible~com\-ponent $\d_i$.
It occurs~in $\r_\ell(\mu_i)$ with multiplicity $1$,
and any generic irreducible representation occurring in $\r_\ell(\mu_i)$
is isomorphic to $\d_i$.
Since $\d_i$ only depends on the mod $\ell$ supercuspidal support of 
$\mu_i$ (\cite{Vigl} III.5.10),
we deduce that $\d_1$ and $\d_2$ are isomorphic.
\end{proof}

\begin{rema}
We expect Theorem \ref{MAINTHM} to hold
without assuming that $\Pi_1$, $\Pi_2$ are polarized.
\end{rema}

\section{Proof of the main theorem}
\label{argumentfinal}

\subsection{}
\label{portnoy}

We prove our main theorem \ref{MAINTHEOINTRO}.

Let $p$ be a prime number different from $2$,
let $F$ be a $p$-adic field
and $G$ be a quasi-split special orthogonal, unitary or symplectic group
over $F$.
We thus have
\begin{itemize}
\item[$\bullet$] 
either $G=\SO(Q)$ for some non-degenerate quadratic form $Q$ over $F$, 
\item[$\bullet$]
or $G=\U(H)$ for some non-degenerate $E/F$-Hermitian form $H$,
\item[$\bullet$]
or $G=\Sp(A)$ for some non-degenerate symplectic form $A$ over $F$.
\end{itemize}
As usual, 
we write $E=F$ in the symplectic and orthogonal cases.

In this paragraph and the next one, 
we assume that the group $G$ is not the split special~ortho\-go\-nal group 
$\SO_2(F)\simeq F^\times$.
The case of split $\SO_2(F)$ will be treated in Paragraph \ref{splitSO2}.

Let $\pi_1$, $\pi_2$ be integral cuspidal irreducible $\qlb$-re\-pre\-sentations
of $G$ such that
\begin{equation*}
\r_\ell(\pi_1) \< \r_\ell(\pi_2).
\end{equation*}
First,
let $k$, $w$, $\GG$ be as in Theorem \ref{GLOBALG}.
More precisely,
we have
\begin{itemize}
\item[$\bullet$] 
either $\GG=\SO(q)$ for a quadratic form 
$q$ as in Theorem \ref{THEOFQ} if $G$ is special 
orthogonal, 
\item[$\bullet$]
or $\GG=\U(h)$ for an $l/k$-Hermitian form 
$h$ as in Theorem \ref{THEOFH} if $G$ is unitary, 
\item[$\bullet$]
or $\GG$ is as in Paragraph \ref{THEOFS} if $G$ is symplectic (see also
Paragraph \ref{ST}). 
\end{itemize}
In particular, 
we have $k_w=F$ and $l_w=E$, 
and the group $\GG(F)$ naturally identifies with $G$.~As
usual, we write $l=k$ in the symplectic and orthogonal cases.

Let $\GG^*$ be the quasi-split inner form of $\GG$ over $k$,
and write $N=N(\GG^*)$.
We thus have:
\begin{itemize}
\item[$\bullet$] 
either $\GG^*=\SO(q^*)$ where $q^*$ is a quadratic form over $k$ 
as in \eqref{defqodd} or \eqref{defqeven}, 
\item[$\bullet$]
or $\GG^*=\U(h^*)$ where $h^*$ is an $l/k$-Hermitian form 
as in \eqref{defh}, 
\item[$\bullet$] 
or $\GG^*=\Sp(f^*)$ where $f^*$ is a symplectic form over $k$ 
as in \eqref{deff}. 
\end{itemize}

Let $\BC$ be the local transfer from $\GG^*(F)$ to $\GL_N(E)$ 
given by Definition \ref{BCq}.
We explained~how to 
ca\-no\-nically~identify representa\-tions of
$\GG(F)$ with those of $\GG^*(F)$ in Paragraph \ref{ST}.
(In the symplectic case, 
we identified~$\GG(F)$ with $\Sp(f_w)$ for some symplectic 
form $f_w$ over $k_w=F$.)
This gives us a local transfer from $\GG(F)=G$ to $\GL_N(E)$,
still denoted $\BC$.

\begin{lemm}
\label{placeu}
There is a finite place $u$ of $k$ different from~$w$,
not di\-vi\-ding $\ell$, 
such that there~is a unitary cuspidal~ir\-re\-ducible complex
representation $\rho$ of $\GG(k_u)$ with the following properties:
\begin{enumerate}
\item 
$\rho$ is com\-pact\-ly~in\-du\-ced from~some compact mod centre,
open subgroup of $\GG(k_u)$,
\item
the local transfer of $\rho$ to $\GL_N(l_u)$ is cuspidal.
\end{enumerate}
\end{lemm}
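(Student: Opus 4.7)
My plan is to split into three cases according to whether $\GG^*$ is unitary, special orthogonal, or symplectic, in each case choosing $u$ so that $\GG(k_u)$ takes a favorable form and reducing to an already-established existence statement.

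In the unitary case, I would apply Chebotarev density to the quadratic extension $l/k$ to pick a finite place $u \neq w$, not dividing $\ell$, that splits in $l$; then $l_u \simeq k_u \times k_u$ and $\GG(k_u) \simeq \GL_n(k_u)$ (compare Remark \ref{bavardise}). Choosing $\rho$ to be any unitary cuspidal irreducible representation of $\GL_n(k_u)$ furnishes (1) via the Bushnell--Kutzko theory of types, while Remark \ref{remaBCUSPLIT} identifies the local transfer with $\rho \otimes \rho^\vee$ viewed on $\GL_N(l_u) \simeq \GL_N(k_u) \times \GL_N(k_u)$, which is cuspidal since each factor is. One can twist by a unitary character if necessary to ensure that $\rho$ itself is unitary.

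In the special orthogonal case, Remark \ref{sosplit} supplies a finite place $u \neq w$ at which $\GG$ becomes split, and one may further arrange that $u \nmid 2\ell$ (in the even case this requires a small Chebotarev argument on $k(\sqrt{(-1)^n\delta})/k$). At such a $u$, Stevens' theorem \cite{ShaunINVENT08} provides compact induction from a compact mod centre open subgroup automatically for any cuspidal $\rho$, since the residue characteristic is odd; the existence of a $\rho$ whose transfer to $\GL_N(k_u)$ is also cuspidal will be precisely the content of Appendix \ref{app1}.

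The symplectic case is the genuine obstacle, for the reason recalled in Paragraph \ref{discussthm11}: over a $p$-adic field with $p$ odd, no cuspidal representation of $\Sp_{2n}$ has a cuspidal transfer to $\GL_{2n+1}$, so $u$ must necessarily be dyadic. This is precisely why Lemma \ref{arthurtotallyreal}(2) was formulated to guarantee a finite place $u$ of $k$ with $k_u \simeq \QQ_2$. At such a $u$, Stevens' construction is unavailable, so both the compact induction property (1) and the cuspidality of the transfer (2) must be produced by hand; this construction is the content of Appendix \ref{app2} (contributed by G.~Henniart), and is where the essential difficulty of the lemma lies.
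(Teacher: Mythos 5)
Your proposal is correct and matches the paper's own proof in all essentials: the same three-way case division (unitary, special orthogonal, symplectic), the same choice of place $u$ (split in the first two cases via Remarks~\ref{bavardise} and~\ref{sosplit}, dyadic via Lemma~\ref{arthurtotallyreal}(2) in the symplectic case), and the same reductions to Remark~\ref{remaBCUSPLIT}, Proposition~\ref{propLS} and Theorem~\ref{thm:guy}. The paper's written proof is three lines long, so you have merely supplied details it leaves implicit --- the Chebotarev density argument guaranteeing a suitable $u$ coprime to $2\ell$, the Bushnell--Kutzko compact-induction result for $\GL_n$, and the observation that the transfer $\rho\otimes\rho^\vee$ over split $l_u$ is cuspidal --- all of which are correct.
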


\begin{proof}
If $G$ is special orthogonal,
it suffices to choose $u\neq w$ such that $\GG(k_u)$ is split
(Remark \ref{sosplit}),
and then apply Proposition \ref{propLS}.

If $G$ is unitary,
it suffices to choose $u\neq w$ such that $\GG(k_u)$ is split
(Remark \ref{bavardise}).

If $G$ is symplectic,
it suffices to choose a place $u$ 
such that $k_u$ is isomorphic to $\QQ_2$
(see Lem\-ma \ref{arthurtotallyreal})
and then apply Theorem \ref{thm:guy}.
\end{proof}

Now fix an isomorphism of fields $\ii:\CC\to\qlb$.
As in Paragraph \ref{sommerive},
let $\BC_\ell$ denote the $\ell$-adic~lo\-cal~transfer from $G$ to $\GL_N(E)$
obtained from $\BC$ thanks to $\ii$, that is, 
$\BC_\ell(V\otimes_\CC\qlb)
=\BC(V)\otimes_\CC\qlb$
for any complex representation $V$ of $G$,
where tensor products are taken with~res\-pect to $\ii$. 
Let $u$ and $\rho$ be as in Lemma \ref{placeu}.
By~Theo\-rem \ref{GLOBALPIPRIME}, 
there are irreducible~auto\-morphic~re\-pre\-sentations $\Pi_1$
and $\Pi_2$ of $\GG(\AA)$ such that
\begin{enumerate}
\item 
$\Pi_{1,u}$ and $\Pi_{2,u}$ are both isomorphic to $\rho$,
\item 
$\Pi_{1,w}\otimes_{\CC}\qlb$ is isomorphic to $\pi_1$ 
and $\Pi_{1,w}\otimes_{\CC}\qlb$ is isomorphic to $\pi_2$, 
\item 
$\Pi_{1,v}$ and $\Pi_{2,v}$ are trivial for any real place $v$,
\item
there is a finite set $\SS$ of places of $k$, containing all real places, 
such that for all $v\notin\SS$~:
\begin{enumerate}
\item
the group $\GG$ is unramified over $\k_v$, and the 
local components $\Pi_{1,v}$ and $\Pi_{2,v}$ are~un\-ra\-mified
with respect to some hyperspecial maximal 
compact subgroup $\K_v$ of 
$\GG(k_v)$, 
\item
the restrictions of the Satake parameters 
of $\Pi_{1,v}\otimes_{\CC}\qlb$ and $\Pi_{2,v}\otimes_{\CC}\qlb$
to the Hecke $\zlb$-algebra $\Hh_{\zlb}(\GG(k_v),\K_v)$
are 
congruent mod the maximal ideal $\m$ of $\zlb$.
\end{enumerate}
\end{enumerate}
Applying Theorems \ref{transfertTaibi} and \ref{transfertLabesse}
to $\Pi_1$, $\Pi_2$,
we get algebraic regular, polarized, cuspidal irreduci\-ble automorphic 
representations $\widetilde{\Pi}_1,\widetilde{\Pi}_2$ of $\GL_N(l)$ such that,
for $i=1,2$ and all {finite} places~$v$ of $k$, 
the local transfer of $\Pi_{i,v}$ to $\GL_N(l_v)$ is $\widetilde{\Pi}_{i,v}$.
Writing $\BC_v$ for the local transfer over $k_v$,
we~thus have $\widetilde{\Pi}_{i,v}=\BC_v(\Pi_{i,v})$,
or~equi\-valently 
$\widetilde{\Pi}_{i,v}\otimes_{\CC}\qlb=\BC_{v,\ell}(\Pi_{i,v}\otimes_{\CC}\qlb)$
where $\BC_{v,\ell}$ is obtained from $\BC_v$ thanks to $\ii$.

In particular,
for all $v\notin\SS$,
it follows from Proposition \ref{enonceULT} that
$\widetilde{\Pi}_{1,v}$ and $\widetilde{\Pi}_{2,v}$ are unramified and that 
the characteristic polynomials of the conjugacy classes of semisimple 
elements in $\GL_n(\qlb)$ associated with 
$\widetilde{\Pi}_{1,v}\otimes_{\CC}\qlb$ and $\widetilde{\Pi}_{2,v}\otimes_{\CC}\qlb$
have coefficients in $\zlb$ and are congruent mod $\m$.

Now apply Theorem \ref{MAINTHM} at $w$: 
the representations $\widetilde{\Pi}_{1,w}\otimes_{\CC}\qlb$
and $\widetilde{\Pi}_{2,w}\otimes_{\CC}\qlb$ are integral,
their reductions mod $\ell$ 
share a common generic irreducible compo\-nent,
and such a generic~com\-po\-nent is unique.
The result now follows from the fact that 
$\Pi_{i,w}\otimes_{\CC}\qlb\simeq\pi_i$ 
for $i=1,2$.

\subsection{}
\label{appA}

We now describe how the map $\BC_\ell$ depends on the choice of $\ii$.
Equivalently,
since any two~iso\-mor\-phisms $\ii,\ii'$ between $\CC$ and $\qlb$
give rise to a field automorphism $\ii^{-1}\circ\ii'$ of $\CC$,
we will describe the beha\-vior~of $\BC$ under the action of $\Aut(\CC)$.
More precisely, 
we prove the following result. 

\begin{prop}
\label{NaturaBCcusp}
Let $\pi$ be a cuspidal complex representation of $G$.
Let $\g\in\Aut(\CC)$.
\begin{enumerate}
\item 
If $G$ is not even orthogonal, 
then $\BC(\pi^\g)=\BC(\pi)^\g$.
\item
If $G$ is even orthogonal, then 
$\BC(\pi^\g)=\BC(\pi)^\g\cdot\varepsilon_\g$, 
where $\varepsilon_\g$ is the unramified character
\begin{equation}
\label{defepsig}
x \mapsto \left(\frac {\g(\sqrt{q})} {\sqrt{q}} \right)^{{\rm val}_F(x)}
\end{equation}
of $F^\times$,
where $q$ is the cardinality of the residue field of $F$.
\end{enumerate}
\end{prop}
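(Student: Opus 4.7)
The plan is to reduce to the already-treated unramified case (Proposition \ref{NaturaBC}) via the globalization scheme of Sections \ref{secglobalisation}--\ref{transfer5}, and to use strong multiplicity one for $\GL_N(\AA_l)$ to transport the resulting relation from the unramified places down to $w$.

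Since $G$ has compact centre, $\pi$ is unitarizable, so Proposition \ref{GLOBALPI} yields an irreducible automorphic representation $\Pi$ of $\GG(\AA)$ with $\Pi_w\simeq\pi$, $\Pi_u\simeq\rho$ (for $\rho$ as in Lemma \ref{placeu}, whose local transfer is cuspidal), and $\Pi_v$ trivial at every real $v$. By Theorem \ref{transfertTaibi} or \ref{transfertLabesse}, there is a cuspidal automorphic representation $\widetilde{\Pi}$ of $\GL_N(\AA_l)$ with $\widetilde{\Pi}_v=\BC_v(\Pi_v)$ at every finite $v$; in particular $\widetilde{\Pi}_w=\BC(\pi)$. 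The twist $\Pi^\gamma$, obtained by letting $\gamma$ act on the underlying complex coefficients, is again an irreducible cuspidal automorphic representation of $\GG(\AA)$, with local components $\Pi_v^\gamma$; since cuspidality of a representation of $\GL_N(l_u)$ is preserved by the $\gamma$-twist and by tensoring with an unramified character, the hypothesis on the place $u$ still holds for $\Pi^\gamma$. Applying the global transfer a second time produces $\widetilde{\Pi^\gamma}$, a cuspidal automorphic representation of $\GL_N(\AA_l)$ with local components $\BC_v(\Pi_v^\gamma)$ at every finite $v$.

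At each $v\notin\SS$, Proposition \ref{NaturaBC} gives the relation $\BC_v(\Pi_v^\gamma)\simeq\BC_v(\Pi_v)^\gamma\cdot\varepsilon_{\gamma,v}$, with $\varepsilon_{\gamma,v}$ trivial unless $G$ is even orthogonal, in which case it is the unramified character $x\mapsto(\gamma(\sqrt{q_v})/\sqrt{q_v})^{\mathrm{val}_v(x)}$. In the non-even-orthogonal case, this says $\widetilde{\Pi^\gamma}_v\simeq(\widetilde{\Pi})^\gamma_v$ for every $v\notin\SS$, so strong multiplicity one yields $\widetilde{\Pi^\gamma}\simeq(\widetilde{\Pi})^\gamma$; evaluating at $w$ gives $\BC(\pi^\gamma)\simeq\BC(\pi)^\gamma$, which is the statement of (1).

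In the even orthogonal case, which is the main obstacle, one must globalize the family $(\varepsilon_{\gamma,v})_{v\notin\SS}$ into a single Hecke character $\chi$ of $\AA_k^\times/k^\times$ whose local components at every $v\notin\SS$ coincide with $\varepsilon_{\gamma,v}$. Since each sign $\gamma(\sqrt{q_v})/\sqrt{q_v}$ lies in $\{\pm1\}$, every $\varepsilon_{\gamma,v}$ is at most quadratic and unramified, so the existence of $\chi$ reduces to verifying the product formula $\prod_{v\notin\SS}\varepsilon_{\gamma,v}(a)=\prod_{v\in\SS}\varepsilon_{\gamma,v}^{-1}(a)$ on $k^\times$, the right-hand side being absorbed into a free choice of $\chi_v$ at the finitely many places in $\SS$. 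Granting $\chi$, strong multiplicity one applied to the cuspidal automorphic representations $\widetilde{\Pi^\gamma}$ and $(\widetilde{\Pi})^\gamma\otimes(\chi\circ\det)$ of $\GL_N(\AA_l)$ forces them to be isomorphic; evaluating at $w$ then produces the twisted formula $\BC(\pi^\gamma)\simeq\BC(\pi)^\gamma\cdot\varepsilon_\gamma$ of~(2).
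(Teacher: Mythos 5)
Your approach for part (1) is exactly the paper's: globalize $\pi$ via Proposition \ref{GLOBALPI} and Lemma \ref{placeu}, transfer to $\GL_N(\AA_l)$ via Theorems \ref{transfertTaibi}/\ref{transfertLabesse}, apply the unramified comparison (Proposition \ref{NaturaBC}) at $v\notin\SS$, and conclude by strong multiplicity one. No issues there.

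Your even-orthogonal argument, however, has a real gap: the Hecke character $\chi$ you want to globalize $(\varepsilon_{\gamma,v})_{v\notin\SS}$ into does not exist in general. The point is that $\varepsilon_{\gamma,v}(\varpi_v)=\delta_\gamma(p_v)^{f_v}$ where $\delta_\gamma(p)=\gamma(\sqrt{p})/\sqrt{p}\in\{\pm1\}$, and the function $p\mapsto\delta_\gamma(p)$ on rational primes can be \emph{arbitrary} as $\gamma$ runs over $\Aut(\CC)$ (since $\Gal(\QQ(\sqrt{p}:p\ \text{prime})/\QQ)\simeq\prod_p\ZZ/2$). Take $k=\QQ$, $\SS=\{\infty\}$: the constraint $\prod_v\chi_v(a)=1$ for $a=p$ prime forces $\chi_\infty(p)=\delta_\gamma(p)$, and a continuous character $\chi_\infty=|\cdot|^s\operatorname{sgn}^\epsilon$ of $\RR^\times$ satisfying $2^s=-1$ and $3^s=-1$ would give $\log 3/\log 2\in\QQ$, a contradiction. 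So if $\delta_\gamma(2)=\delta_\gamma(3)=-1$ there is no such $\chi$. Your ``product formula'' verification is also ill-posed, as $\varepsilon_{\gamma,v}$ is only defined at finite $v$; and in any case one cannot absorb infinitely many constraints (one for each $a\in k^\times$) into a free choice of finitely many local components. Being locally quadratic everywhere is far from sufficient: by class field theory the pattern $\{v:\varepsilon_{\gamma,v}\neq 1\}$ would have to be the inert set of a quadratic extension of $k$ unramified outside $\SS$, and there is no reason for $\delta_\gamma$ to produce such a pattern.

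The paper sidesteps this by never introducing a twisting character. It compares the two honest cuspidal algebraic automorphic representations $\widetilde{\Pi}'|\det|^{1/2}$ and $(\widetilde{\Pi}|\det|^{1/2})^\gamma$ of $\GL_N(\AA_k)$. The key local identity $(|\cdot|_v^{1/2})^\gamma=|\cdot|_v^{1/2}\cdot\varepsilon_{\gamma,v}$ shows these agree at all $v\notin\SS$, so strong multiplicity one applies, and evaluating at $w$ gives the twisted formula. In other words, the half-integral twist by $|\det|^{1/2}$, which is a genuine Hecke character, automatically carries the $\varepsilon_\gamma$-discrepancy when hit with $\gamma$; one never needs $\varepsilon_\gamma$ itself to be automorphic. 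If you want to keep the spirit of your ``globalize the twist'' idea, this is the correct object to globalize.
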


Let $\pi$ be a {cuspidal} complex representation of $G$.
As in Lemma \ref{placeu}, 
let $u$~be a finite~place of $k$ different from $w$, not dividing $\ell$, 
and $\rho$~be~a~uni\-tary
cuspidal irreducible complex representation~of $\GG(k_u)$ with cuspidal
transfer.
By Proposition \ref{GLOBALPI},~we 
have an irreducible~au\-tomor\-phic repre\-sen\-ta\-tion $\Pi$ of $\GG(\AA)$ 
such that
\begin{enumerate}
\item 
the local component $\Pi_u$ is isomorphic to $\rho$,
\item 
the local component $\Pi_w$ is isomorphic to $\pi$, 
\item 
the local component $\Pi_v$ is the trivial character of $\GG(k_v)$ for any 
real place $v$ of $k$.
\end{enumerate}
Associated with $\Pi$ by Theorems \ref{transfertTaibi} and
\ref{transfertLabesse}, 
there is an algebraic regular, polarized,
cuspidal~ir\-re\-du\-cible automorphic representation 
$\widetilde{\Pi}$ of $\GL_N(\AA_l)$ such that 
$\widetilde{\Pi}_v=\BC_v(\Pi_v)$ for all finite places $v$ of $k$,
where $\BC_v$ is as in Paragraph \ref{portnoy}.
Now let $\g\in\Aut(\CC)$.
Then $\Pi^\g$ satisfies 
\begin{enumerate}
\item 
the local component $\Pi^\g_u$ is isomorphic to $\rho^\g$,
\item 
the local component $\Pi^\g_w$ is isomorphic to $\pi^\g$, 
\item 
the local component $\Pi^\g_v$ is the trivial character of $\GG(k_v)$ for any 
real place $v$ of $k$.
\end{enumerate}
Associated with it by Theorems \ref{transfertTaibi} and
\ref{transfertLabesse}, 
there is an algebraic regular, polarized, 
cuspidal~ir\-re\-du\-cible automorphic representation 
$\widetilde{\Pi}'$ of $\GL_N(\AA_l)$ such that 
$\widetilde{\Pi}'_v=\BC^{\phantom{\g}}_v(\Pi^\g_v)$ for all finite $v$.

Let $\SS$ be a finite set of places of $k$,
containing all real places,
such that for all $v\notin\SS$
the~group $\GG$ is unramified over $k_v$ and 
the local component $\Pi_v$ is unramified with respect to some 
hyperspe\-cial maximal compact subgroup of $\GG(k_v)$. 

Assume first that $G$ is not an even special orthogonal group.
For $v\notin\SS$,
Proposition \ref{NaturaBC} gives us 
$\BC^{\phantom{\g}}_v(\Pi^\g_v)=\BC_v(\Pi_v)^\g$,
thus $\widetilde{\Pi}'$ and $\widetilde{\Pi}^\g$ coincide at almost all finite places.
By strong multiplicity $1$, we deduce that $\widetilde{\Pi}'=\widetilde{\Pi}^\g$.
It follows that $\BC^{\phantom{\g}}_w(\Pi^\g_w)=\BC_w(\Pi_w)^\g$,
that is, $\BC(\pi^\g)=\BC(\pi)^\g$.

Assume now that $G$ is even special orthogonal (thus $l=k$).
For all finite places $v$ of $k$,~let~$\BC^*_v$ be the map 
$\pi\mapsto\BC_v(\pi)|\det|_v^{1/2}$,
where $|\cdot|_v$ is the absolute value of $k_v^\times$ and
$|\cdot|_v^{1/2}$ is its square root with~respect to $q_v^{1/2}$,
where $q_v$ is the cardinality of the residue field of $k_v$.
An argument similar to that of the non even orthogonal case gives us
$\widetilde{\Pi}'|\det|^{1/2}=(\widetilde{\Pi}|\det|^{1/2})^\g$
where $|\cdot|$ is the~ab\-so\-lu\-te~value of $\AA^\times$.
Looking at the local component at $w$, we deduce that
$\BC(\pi^\g)=\BC(\pi)^\g\cdot\e_\g$, whe\-re~$\e_\g$~is~defined
as in \eqref{defepsig}.
We have proved Proposition \ref{NaturaBCcusp}.

\begin{rema}
\label{contrevie}
The same argument shows that Proposition \ref{NaturaBCcusp} holds
for all discrete series~re\-presentations $\pi$ of $G$
(it suffices to replace Proposition \ref{GLOBALPI} by
\cite{Shin} Theorem 5.13).
Let us explain how this implies 
that the set of isomorphism classes of discrete series representations of $G$ 
is~sta\-ble under $\Aut(\CC)$. 
Let $\h$ be the local Langlands parameter of a
discrete series representation $\pi$
(up to ${\rm O}_{2n}(\CC)$-conjugacy in the even orthogonal case)
and let $\phi=\Std\circ\h$ be the Langlands~pa\-ra\-meter of $\BC(\pi)$.
On the one hand, 
the fact that $\pi$~is~a~dis\-crete~series representation implies~that 
the~quo\-tient of the centralizer 
of the ima\-ge of $\phi$ in $\GGH$
by ${\bf Z}(\GGH)^{\W_k}$ is finite
(see the end of~Pa\-ra\-graph \ref{defstd}).
On the other hand, 
the Langlands parameter of $\BC(\pi^\g)$ is $\phi'=\phi^\g\cdot\n\chi$
(where~$\n$ is the unramified character of $F^\times$ of order $2$
and
$\chi$ is either the character $\e_\g$ defined by
\eqref{defepsig} if $G$ is even orthogonal, 
or the trivial character otherwi\-se),
which has the same finiteness property.
Thus the $L$-packet of $\pi^\g$ is discrete. 
Thus $\pi^\g$ is a discrete series representation.
\end{rema}

\begin{rema}
\label{AutF}
Let us examine how the local transfer map 
behaves under automorphisms~of the base field $F$,
for discrete series representations.
Let $\pi$ be a discrete series representation of $G$,
and let $\phi$ be the Langlands parameter of its transfer $\BC(\pi)$.
By Moeglin \cite{Moeglin41},
an irreducible Langlands parameter $\s\boxtimes\SS_a$,
where $\s$ is an irreducible representation of dimension $k\>1$ of $\W_F$ 
and $a$ is a positive integer, 
occurs in $\phi$ if and only if:
\begin{enumerate}
\item
the cuspidal representation $\rho$ of $\GL_{k}(E)$
associated with $\s$ by the 
Langlands corres\-pon\-dence is $c$-selfdual, 
\item
if $s$ is the unique non-negative real number such that
the normalized paraboli\-cally indu\-ced~representation
$\rho\nu^s\rtimes\pi$ 
is reducible,
then $2s-1$ is a positive integer
and $2s-1-a$ is~a~non-negative~even integer.
\end{enumerate} 
Now let $\varkappa\in\Aut(F)$,
which extends to an automorphism of $E$ still denoted $\varkappa$.
Then
\begin{itemize}
\item
the cuspidal representation 
$\rho^\varkappa$ is $\varkappa^{-1} c \varkappa$-selfdual, 
\item
the irreducible representation of $\W_E$ 
associated with it by the 
Langlands corres\-pon\-dence is $\s^\varkappa$
(see \cite{HenniartJTNB01} Propri\'et\'e 1), 
\item 
the normalized paraboli\-cally induced representation
$\rho^\varkappa\nu^s\rtimes\pi^\varkappa$ is reducible, 
\item 
the representation $\s\boxtimes\SS_a$ occurs in $\phi$ if and only if
$\s^\varkappa\boxtimes\SS_a$ occurs in $\phi^\varkappa$.
\end{itemize}
It follows that the Langlands parameter of 
$\BC(\pi^\varkappa)$ is $\phi^\varkappa$.
Applying \cite{HenniartJTNB01} Propri\'et\'e 1 again,
$\phi^\varkappa$ is the Langlands parameter of $\BC(\pi)^\varkappa$. 
Thus $\BC(\pi^\varkappa)$ is equal to $\BC(\pi)^\varkappa$. 
\end{rema}

\subsection{}
\label{splitSO2}

{In this paragraph, 
we discuss the case of the split special orthogonal group 
$\SO_2(F)\simeq F^\times$.}

Let $\chi$ be a $\qlb$-cha\-racter of this group. 
Its transfer~to $\GL_2(F)$ is 
\begin{itemize}
\item 
either the normalized parabolically induced 
representation $\chi\times\chi^{-1}$ when the character 
$\chi^2$ is different from the absolute value $|\cdot|$ 
and its inverse $|\cdot|^{-1}$, 
\item
or the unique character occurring as a component of $\chi\times\chi^{-1}$ 
when $\chi^2\in\{|\cdot|,|\cdot|^{-1}\}$.
\end{itemize}
Properties (1) and (2) of Theorem \ref{MAINTHEOINTRO} thus hold, since 
\begin{itemize}
\item 
an irreducible $\qlb$-representation of $\GL_2(F)$
is integral if and only if its cuspidal support 
is integral (see \cite{Vigb} II.4.14 and \cite{DatNuTempered} Proposition 
6.7), 
\item
if $\chi$ is integral,
the supercuspidal support of any irreducible component of 
$\r_\ell(\chi\times\chi^{-1})$ is the $\GL_2(F)$-conjugacy class of the cuspidal 
pair $(F^\times\times F^\times,\chi\otimes\chi^{-1})$.
\end{itemize}

However, 
if $\xi$ is any non-trivial character of $F^\times$ with values in $1+\m$ 
(where $\m$ is the maximal ideal of $\zlb$)
such that $\xi^2\notin\{1,|\cdot|^{-2}\}$,
the characters 
$|\cdot|^{1/2}$ and $\xi|\cdot|^{1/2}$ are congruent, 
but~the transfer~of the first one is the trivial character of 
$\GL_2(F)$, 
which is not generic.
Property~(3)~thus does not hold. 
Also, 
the transfer of the second one is 
$\xi|\cdot|^{1/2}\times\xi^{-1}|\cdot|^{-1/2}$,
whose reduction mod $\ell$ contains the trivial character with multiplicity 
$1$ (if $\ell\neq2$)
or $2$ (if $\ell=2$)
by \cite{VigGL2} Th\'eor\`eme 3.

Assume further that $q$ has order $2$ mod $\ell$,
that is, $\ell$ divides $q^2-1$ but not $q-1$,
and let~$\n$~be the~unique unramified $\qlb$-character of order $2$ of 
$F^\times$. 
Then the transfer of $\n|\cdot|^{-1/2}$
(which is con\-gruent to $|\cdot|^{1/2}$)
is $\n\circ\det$,
whose reduction mod $\ell$ is a character of order $2$.
We thus have two~congruent characters of $F^\times$ whose
transfers to $\GL_2(F)$ have reductions mod $\ell$ with~no~com\-ponent
in common. 

\appendix

\section{Cyclic base change}
\label{appBC}

Let $F$ be a $p$-adic field, 
and let $K$ be a cyclic finite extension of $F$ of degree $d$.
Fix an integer $n\>1$
and write $G=\GL_n(F)$ and $H=\GL_n(K)$. 
By \cite{AC},
there exists a map~from~isomor\-phism classes of irreducible
(smooth)~com\-plex re\-pre\-sentations of $G$
to those of $H$
called the local \textit{base change},
denoted $\CB=\CB_{K/F}$.


Now let us fix a prime number $\ell$ different from $p$
and an isomorphism of fields $\ii$ between~$\CC$~and $\qlb$.
Replacing $\CC$ by $\qlb$ thanks to~$\ii$,~one 
obtains a local base change $\CB_{K/F,\ell}$
for irreducible smooth $\qlb$-re\-pre\-sen\-ta\-tions.

In this appendix,
we investigate the dependency of $\CB_{K/F,\ell}$ in the choice of $\ii$,
or equivalently the behavior of $\CB_{K/F}$ with respect to automorphisms of $\CC$.

\subsection{}

Let $\rec_F$ denote the local Langlands correspondence
from the set of isomor\-phism classes of~ir\-re\-ducible 
complex representations of $G$
to the set $\Phi(G)$ of $\GL_n(\CC)$-conjugacy classes~of local~Lang\-lands
para\-meters for $G$ (\cite{HT,Henniart}).

Replacing $\CC$ by $\qlb$ thanks to~$\ii$,~one 
obtains a local Langlands correspondence $\rec_{F,\ell}$
for irredu\-ci\-ble $\qlb$-re\-pre\-sen\-ta\-tions.
The dependency of $\rec_{F,\ell}$ in $\ii$,
or equivalently the behavior of~$\rec_{F}$ with~res\-pect to automorphisms of 
$\CC$, 
has been studied in
\cite{HenniartJTNB01,ClozelAnnArbor90}:
the map $\pi\mapsto\rec_F(\pi|\det|^{(1-n)/2})$~is
in\-sensitive to automorphisms of $\CC$.
It follows that
\begin{equation}
\label{naturec}
\rec_F(\pi^\g) = \rec_F(\pi)^\g \cdot\n_{F,\g}^{1-n}
\end{equation}
for all $\g\in\Aut(\CC)$ and all ir\-re\-ducible 
complex representations $\pi$ of $G$,
where 
\begin{equation}
\label{explimu}
\n_{F,\g}(w)=\left(\frac {\g(\sqrt{q})} {\sqrt{q}} \right)^{\v_F(w)}
\end{equation}
for all $w\in \W_F$,
where $\v_F$ is the valuation map taking any Frobenius element to $1$.

\subsection{}

Let $\res_{K/F}$ be the map from $\Phi(G)$ to $\Phi(H)$
defined by restricting local
Langlands parameters from $\WD_F$ to $\WD_K$.
The local base change $\CB_{K/F}$ is characterized by the identity
\begin{equation*}
\rec_K \circ \CB_{K/F} = \res_{K/F} \circ \rec_F.
\end{equation*}
Now let us prove that 
$\CB=\CB_{K/F}$ is insensitive to the action of $\Aut(\CC)$. 

\begin{prop}
\label{garabedian}
For all $\g\in\Aut(\CC)$ and all ir\-re\-ducible 
complex representations $\pi$ of $G$,~we have
$\CB_{K/F}(\pi^\g)=\CB_{K/F}(\pi)^\g$.
\end{prop}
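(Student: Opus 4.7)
The plan is to apply $\rec_K$ to both sides of the desired identity, use the defining property $\rec_K \circ \CB_{K/F} = \res_{K/F} \circ \rec_F$, and reduce everything to the compatibility formula \eqref{naturec} between the local Langlands correspondence and the action of $\Aut(\CC)$. Since $\rec_K$ is a bijection, it suffices to prove
\begin{equation*}
\rec_K\bigl(\CB_{K/F}(\pi^\g)\bigr) = \rec_K\bigl(\CB_{K/F}(\pi)^\g\bigr).
\end{equation*}

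For the left hand side, applying first the defining property of $\CB_{K/F}$ and then \eqref{naturec} gives
\begin{equation*}
\rec_K\bigl(\CB_{K/F}(\pi^\g)\bigr)
= \res_{K/F}\bigl(\rec_F(\pi^\g)\bigr)
= \res_{K/F}\bigl(\rec_F(\pi)^\g\bigr)\cdot \res_{K/F}(\n_{F,\g})^{1-n}.
\end{equation*}
For the right hand side, applying \eqref{naturec} for $K$ followed by the defining property of $\CB_{K/F}$ gives
\begin{equation*}
\rec_K\bigl(\CB_{K/F}(\pi)^\g\bigr)
= \rec_K\bigl(\CB_{K/F}(\pi)\bigr)^\g \cdot \n_{K,\g}^{1-n}
= \res_{K/F}\bigl(\rec_F(\pi)\bigr)^\g \cdot \n_{K,\g}^{1-n}.
\end{equation*}
Since restriction to $\WD_K$ commutes with the $\g$-twist on Langlands parameters, the two expressions agree provided
\begin{equation}
\label{keyidentity}
\res_{K/F}(\n_{F,\g}) = \n_{K,\g}
\end{equation}
as characters of $\W_K$.

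The only point that requires checking is therefore \eqref{keyidentity}, which is a direct computation from the explicit formula \eqref{explimu}. Writing $f$ for the residue degree of $K/F$, so that $q_K = q_F^f$, one may choose the square root $\sqrt{q_K} = (\sqrt{q_F})^f$; then for $w \in \W_K$ one has $\v_F(w) = f\cdot\v_K(w)$ and
\begin{equation*}
\n_{F,\g}(w)
= \left(\frac{\g(\sqrt{q_F})}{\sqrt{q_F}}\right)^{f\v_K(w)}
= \left(\frac{\g(\sqrt{q_K})}{\sqrt{q_K}}\right)^{\v_K(w)}
= \n_{K,\g}(w),
\end{equation*}
the formula being independent of the chosen square roots since $\g(\sqrt{q_F})/\sqrt{q_F}\in\{\pm 1\}$. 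Injectivity of $\rec_K$ then yields $\CB_{K/F}(\pi^\g) = \CB_{K/F}(\pi)^\g$. There is no real obstacle here: all the work has been done in establishing \eqref{naturec}, and the proposition is essentially the observation that the ``twist correction'' appearing in \eqref{naturec} is compatible with restriction from $\W_F$ to $\W_K$, so that it cancels on both sides.
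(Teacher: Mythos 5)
Your proof is correct and follows essentially the same route as the paper's: apply $\rec_K$, use the defining identity $\rec_K\circ\CB_{K/F}=\res_{K/F}\circ\rec_F$ together with \eqref{naturec} on both sides, and reduce to checking $\n_{F,\g}|_{\W_K}=\n_{K,\g}$ via the explicit formula \eqref{explimu} and the relations $q'=q^{f_{K/F}}$, $\v_F|_{\W_K}=f_{K/F}\v_K$. The extra observation that the ratio $\g(\sqrt q)/\sqrt q$ is a sign (hence the computation is insensitive to the choice of square roots) is a harmless refinement.
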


\begin{proof}
Let $\pi$ be an irreducible complex representation of $G$.
We have
\begin{eqnarray*}
\rec_K(\CB_{K/F}(\pi^\g)) &=& \res_{K/F}(\rec_F(\pi^\g)) \\
&=& \res_{K/F}(\rec_F(\pi)^\g \cdot \n_{F,\g}^{1-n}) \\
&=& \rec_K(\CB_{K/F}(\pi))^\g \cdot \left(\n_{F,\g}|_{\W_K}\right)^{1-n} \\
&=& \rec_K(\CB_{K/F}(\pi)^\g ) \cdot \left(\n_{K,\g} \cdot \n_{F,\g}|_{\W_K}\right)^{1-n}.
\end{eqnarray*}
We are thus reduced to compare $\n_{F,\g}|_{\W_K}$ with $\n_{K,\g}$.
Using the explicit formula \eqref{explimu}, we get
\begin{equation*}
\n_{F,\g}|_{\W_K} = \left(\frac {\g(\sqrt{q})} {\sqrt{q}} \right)^{\v_F|_{\W_K}},
\quad
\n_{K,\g} = \left(\frac {\g(\sqrt{q'})} {\sqrt{q'}} \right)^{\v_K},
\end{equation*}
where $q'$ is the cardinality of the residue field of $K$.
Since $q'=q^{f_{K/F}}$ and $\v_F|_{\W_K}=f_{K/F}\v_K$,
we deduce that $\n_{F,\g}|_{\W_K}=\n_{K,\g}$,
thus $\CB_{K/F}(\pi^\g)=\CB_{K/F}(\pi)^\g$.
\end{proof}

\subsection{}

The map $\CB_{K/F,\ell}$ preserves the fact of being integral: 
this follows from the fact that
an~irredu\-cible $\qlb$-representation $\pi$ of $G$ is integral if and only
if the restriction of $\rec_{F,\ell}(\pi)$ to $\W_F$ is~in\-te\-gral
(\cite{Vigl} 1.4) and that the restriction to $\W_K$ of an integral 
$\qlb$-representation of $\W_F$ is integral.


\subsection{}
\label{BClinearcyclic}

We now review the congruence properties of $\CB_{K/F,\ell}$,
after J.~Zou's PhD thesis \cite{ZouThese} 1.10.

Associated with an irreducible representation $\tau$ of $\GL_n(K)$,
with coefficients in $\qlb$ or $\flb$, there is a partition
\begin{equation*}
\l(\tau)=(k_1\>k_2\>\dots)
\end{equation*}
of $n$ defined inductively as follows.
Let $k_1$ denote the largest integer $k\in\{1,\dots,n\}$ such that~the $k$th 
derivative $\tau^{(k)}$~is non-zero. 
If $k_1=n$, then $\l(\tau)=(n)$.
Otherwise, 
$(k_2\>\dots)$ is the partition of $n-k_1$ associated with the 
representation $\tau^{(k_1)}$ of $\GL_{n-k_1}(K)$.

By \cite{Vigl} V.9.2,
if $\tau$ is an~in\-te\-gral irreducible $\qlb$-representation of 
$\GL_n(K)$,
its reduction mod $\ell$ has a~uni\-que ir\-re\-du\-cible component $\pi$
such that $\l(\pi)=\l(\tau)$.
This component is denoted $\j_\ell(\tau)$. 

\begin{theo}[\cite{ZouThese} Theorem 1.10.17]
Let $\pi_1$ and $\pi_2$ be integral irreducible $\qlb$-repre\-sen\-ta\-tions 
of $\GL_n(F)$.
If $\j_\ell(\pi_1)=\j_\ell(\pi_2)$,
then $\j_\ell(\CB_{K/F,\ell}(\pi_1))=\j_\ell(\CB_{K/F,\ell}(\pi_2))$.
\end{theo}

In particular,
if $\pi_1$, $\pi_2$ are cuspidal,
which implies that $\l(\pi_1)=\l(\pi_2)=(n)$,
their base~chan\-ges $\CB_{K/F,\ell}(\pi_1)$ and
$\CB_{K/F,\ell}(\pi_2)$
are generic.
This theorem thus says that, 
if $\r_\ell(\pi_1)=\r_\ell(\pi_2)$,
then $\r_\ell(\CB_{K/F,\ell}(\pi_1))$ and $\r_\ell(\CB_{K/F,\ell}(\pi_2))$
have a unique generic irreducible component in common.
This can be seen as an analogue of Theorem \ref{MAINTHEOINTRO}
for the cyclic base change from $G$ to $H$. 

\subsection{}
\label{examCBcyc}

In this paragraph,
we give an example of congruent integral cuspidal 
$\qlb$-repre\-sen\-ta\-tions $\pi_1$,~$\pi_2$ of $G$
such that $\CB_{K/F,\ell}(\pi_1)$ and $\CB_{K/F,\ell}(\pi_2)$ are not
congruent. 

First, assume that $\pi$ is an integral cuspidal
irreducible $\qlb$-representation of $G$.
Let $m$ denote the cardinality of the set of isomorphism classes of
$\pi\chi$,
where $\chi$ runs over
the characters of $F^\times$ trivial on $\N_{K/F}(K^\times)$,
and set $e=d/m$. 
Then there exists a cuspidal irreducible
representation~$\rho$ of $\GL_{n/e}(K)$ such that 
\begin{equation*}
\CB_{K/F}(\pi)=\rho\times\rho^\a\times\dots\times\rho^{\a^{e-1}}
\end{equation*}
where $\a$ is a generator of $\Gal(K/F)$
and $\times$ denotes normalized parabolic induction with respect to a choice
of square root of $q$, the cardinality of the residue field of $F$
(see \cite{AC} Chapter 1, \S6.4).

Now assume that $n=2$ and that $K$ is a ramified quadratic extension of $F$,
and let $\omega_{K/F}$ be the character of $F^\times$ with kernel 
$\N_{K/F}(K^\times)$.
Let $\pi_1$ be an integral cuspidal $\qlb$-representation~of $G=\GL_2(F)$ of
level $0$.
By \cite{BK},
it is compactly induced~from~a representation $\boldsymbol{\l}_1$ of
$F^\times\GL_2(\Oo_F)$
whose restriction to $\GL_2(\Oo_F)$ is the inflation of a
cuspidal~ir\-re\-du\-ci\-ble representation $\s_1$ of the group
$\GL_2(\kk)$,
where $\kk$ is the residue field of $F$.
Associated with $\s_1$,~the\-re is (\cite{Green}) a character
\begin{equation}
\xi_1:\boldsymbol{l}^\times\to\overline{\ZZ}{}_\ell^\times
\end{equation}
such that $\xi_1^{q}\neq\xi_1$,
where $\boldsymbol{l}$ is a quadratic extension
of $\kk$ and $q$ is the cardinality of $\kk$. 

The~re\-pre\-sentation $\pi_1\omega_{K/F}$ is isomorphic to $\pi_1$ if and only
if $\boldsymbol{\l}_1\omega_{K/F}$ is isomorphic to
$\boldsymbol{\l}_1$.~As these representations all have the same central character,
this is equivalent to $\s_1\eta\simeq\s_1$,
where~$\n$ is the unique character of order $2$ of $\kk^\times$
(note that the restriction of $\omega_{K/F}$ to $\Oo_F^\times$ is the
inflation~of $\eta$),
which is equivalent to $\xi_1(\eta\circ\N_{l/\kk})=\xi_1^q$,
that is, $\xi_1^{q-1}$ has order $2$.
Assume that this is the ca\-se.~Thus~$e_1=2$ and we may write
$\CB_{K/F}(\pi_1)=\rho_1^{}\times\rho_1^\a$ for some
(tamely ramified, integral) charac\-ter $\rho_1$ of $K^\times$.

Assume further that $\ell$ is a prime divisor of $q^2-1$ not dividing $q-1$,
that is,
$\ell$ is an odd~pri\-me~divisor of $q+1$.
Let $\mu$ be a character of $\boldsymbol{l}^\times$ of order $\ell$
and~set $\xi_2=\xi_1\mu$.
Since $\xi_2^{q}\neq\xi_2$, 
there is~a~cus\-pidal $\qlb$-representation $\s_2$ of $\GL_2(\kk)$ associated with 
$\xi_2$.
Since $\xi_2$ and $\xi_1$ are con\-gruent,
$\s_2$~and $\s_1$ are congruent (see for instance \cite{MSf} 2.6).
Let us inflate and extend $\s_2$ to a repre\-sen\-ta\-tion $\boldsymbol{\l}_2$ of
$F^\times\GL_2(\Oo_F)$ which is~con\-gruent to $\boldsymbol{\l}_1$,
then compactly induce $\boldsymbol{\l}_2$ to a representation $\pi_2$ of
$\GL_2(F)$.
This is an integral cuspidal representation of level $0$
{which is congruent to $\pi_1$.}

Since $\mu^{q}\neq\mu$,
we have $e_2=1$,
thus $\CB_{K/F}(\pi_1)$ is a cuspidal representation $\rho_2$
of $\GL_2(K)$.
Its reduction mod $\ell$ is an irreducible cuspidal $\flb$-representation 
of $\GL_2(K)$.
It is the unique generic component of $\r_\ell(\rho_1^{}\times\rho_1^\a)$.

\section{Cuspidal representations of split $p$-adic 
orthogonal groups with irreducible Galois parameter}
\label{app1}

\subsection{}

Let $F$ be a $p$-adic field with $p\neq2$,
and let $G$ be a split special orthogonal group over $F$,
that is, $G=\SO(Q)$ where $Q$ is a maximally isotropic quadratic form
over $F$.
Let $n$ be the dimension of $Q$.
In this section, we assume that $n\neq2$.
Let $m=\lfloor n/2\rfloor$ be the Witt index of $Q$.
With the notation of Paragraph \ref{quasisplitg},
we have $G=\SO_{2m+1}(F)$ if $n$ is odd,
$G=\SO_{2m}^1(F)$ if $n$ is even.
We will prove the following result. 

\begin{prop}
\label{propLS}
There exists a cuspidal representation of level $0$ of $G$
whose transfer to $\GL_N(F)$ is cuspidal.
\end{prop}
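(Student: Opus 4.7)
The plan is to exhibit an explicit level zero cuspidal representation $\pi$ of $G$ such that its Langlands parameter $\phi:\WD_F\to\GGH$ has the property that $\Std\circ\phi$ is an irreducible $N$-dimensional smooth representation of $\W_F$ (with trivial monodromy). Under the Arthur--Mok correspondence, this condition is equivalent to cuspidality of the transfer $\BC(\pi)$ in $\GL_N(F)$: the supercuspidal representations of $\GL_N(F)$ correspond precisely to irreducible Weil parameters with trivial nilpotent part.

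The construction proceeds via Deligne--Lusztig theory on the reductive quotient of a well-chosen maximal parahoric subgroup. Concretely, I fix a maximal parahoric $P\subset G$ and let $\underline{G}=P^\circ/P^\circ_+$ denote its reductive quotient, a connected reductive group over the residue field $\kk$. I then choose an elliptic maximal torus $\underline{T}\subset\underline{G}$ of \emph{Coxeter type}, meaning that the Frobenius action on $X^\ast(\underline{T})$ is given by a Coxeter element of $W(\underline{G})$, so $\underline{T}(\kk)$ is anisotropic modulo the center and corresponds to the largest possible unramified field extension inside $\widehat{T}$. A character $\theta$ of $\underline{T}(\kk)$ in general position (that is, regular: its stabilizer in $W(\underline{T},\underline{G})^F$ is trivial) gives rise, via Deligne--Lusztig induction, to an irreducible cuspidal representation $\pm R_{\underline{T}}^\theta$ of $\underline{G}(\kk)$. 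Inflating to $P^\circ$, extending to $N_G(P^\circ)$ (or directly to $P$ if $P$ is self-normalizing), and compactly inducing to $G$ produces a level zero cuspidal representation $\pi$. Its Langlands parameter is built from the pair $(\mathbb{T},\chi)$, where $\mathbb{T}$ is the canonical lift of $\underline{T}$ to an unramified elliptic torus of $G$ over $F$ and $\chi$ is the tame character of $\mathbb{T}(F)$ obtained by inflating $\theta$ across the reduction map; explicitly, $\Std\circ\phi=\Ind_{\W_K}^{\W_F}\chi'$ for an unramified extension $K/F$ of degree $N$ and a tame character $\chi'$ of $\W_K$ corresponding to $\chi$ via local class field theory. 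Regularity of $\theta$ translates into the condition that $\chi'$ has trivial stabilizer under $\Gal(K/F)$, hence the induced parameter is irreducible of dimension $N$.

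In the odd orthogonal case $G=\SO_{2m+1}(F)$, I take $P$ to be the hyperspecial parahoric with reductive quotient $\SO_{2m+1}(\kk)$ (type $B_m$). A Coxeter torus here has rational points cyclic of order $q^m+1$, and lifts to $\mathbb{T}(F)\subset K^\times$ for $K/F$ the unramified extension of degree $2m$; the embedding $\widehat{\mathbb{T}}\hookrightarrow\widehat{G}=\Sp_{2m}(\CC)$ ensures the induced parameter automatically lands in $\Sp_{2m}(\CC)$. In the even orthogonal case $G=\SO_{2m}^1(F)$, I take $P$ so that $\underline{G}$ contains an elliptic torus whose lift corresponds to a degree $2m$ unramified extension $K/F$; the existence of such a torus for type $D_m$ requires choosing a non-hyperspecial parahoric (whose reductive quotient involves the non-split form $\SO_{2m}^-$ over $\kk$) when $-1$ does not lie in the Weyl group $W(D_m)$, i.e. when $m$ is odd. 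The embedding then lands in $\SO_{2m}(\CC)\subset\GL_{2m}(\CC)$, matching the dual of $\SO_{2m}^1$.

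The main obstacle is the even orthogonal case when $m$ is odd: the Weyl group $W(D_m)$ does not contain the element $-\id$, so the hyperspecial parahoric of $\SO_{2m}^1(F)$ admits no elliptic torus of Coxeter type with the requisite splitting field of degree exactly $2m$. One must therefore pass to a non-hyperspecial maximal parahoric whose reductive quotient is a twisted form incorporating $\SO_{2m}^-$, verify that such a parahoric exists in split $\SO_{2m}^1(F)$ (this is standard from the structure of the Bruhat--Tits building of $D_m$), and check that the resulting elliptic torus produces a parameter with image actually in $\SO_{2m}(\CC)$ rather than $\mathrm{O}_{2m}(\CC)\smallsetminus\SO_{2m}(\CC)$, which is a parity/determinant computation on $\chi'$. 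Once these technical verifications are made, regularity of $\theta$ and the Coxeter choice ensure the irreducibility of $\Std\circ\phi$, completing the proof.
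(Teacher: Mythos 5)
Your plan shares the paper's overall strategy (produce a level--$0$ cuspidal from a nice torus--character pair whose parameter is irreducible), but there are two real problems.

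\textbf{1.~The identification of the Arthur parameter is the hard step and you treat it as free.} You assert that the Langlands parameter attached by the Arthur--Mok correspondence to your compactly-induced representation is $\Std\circ\phi=\Ind_{\W_K}^{\W_F}\chi'$, directly read off from the Deligne--Lusztig datum $(\underline{T},\theta)$. That is precisely the content one must \emph{prove}: that Arthur's parametrisation of depth-zero cuspidals of classical groups agrees with the parametrisation suggested by the DeBacker--Reeder/Kaletha construction. The paper's proof rests entirely on citing Lust--Stevens (\cite{LS} \S8), who compute the reducibility set ${\rm Red}(\pi)$ and the Jordan set ${\rm Jord}(\pi)$ for such $\pi$ and show they reduce to a single element $(\rho,1)$ with $\rho$ a self-dual cuspidal of $\GL_N(F)$, together with M\oe glin's characterisation of discrete parameters (\cite{Moeglin41}). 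Without invoking such a result, the step from ``$\pi=\text{c-Ind}$ of a DL cuspidal'' to ``$\Std\circ\phi$ is irreducible'' is not justified.

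\textbf{2.~Your parity analysis in the even orthogonal case is off, and your parahoric choice diverges from what actually works.} For the parameter to have degree $N=2m$, the relevant Weyl element is an $m$-cycle composed with a \emph{single} sign change (so that its characteristic polynomial on $X_*(\underline{T})$ is $X^m+1$). This element has an odd number of sign flips, hence lies in $W(B_m)\setminus W(D_m)$ for \emph{every} $m$, not only when $m$ is odd; the criterion ``$-\id\in W(D_m)$ iff $m$ even'' you invoke is about a different element and is not the obstruction. Correspondingly, the paper does \emph{not} switch parahorics: it always takes $m_2=0$, whose reductive quotient has neutral component $\SO_{m,m}(\kk)$ (resp.~$\SO_{m+1,m}(\kk)$), and works with Lusztig series $\EuScript{E}(\EuScript{G}^\circ,s)$ for a self-dual cuspidal $\s$ of $\GL_{2m}(\kk)$ with parameter $s$ of degree $2m$, citing \cite{LS} \S7.2--7.3 for existence of a cuspidal in that series (with a case split on the parity of $m$ for different reasons: passing to ${\rm GSO}_m^{\pm}$ with connected centre when $m$ is even). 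Your recipe of choosing a non-hyperspecial parahoric in the even-$m$ case is not only different from the paper's route, it does not match the actual obstruction. The technical ``parity/determinant computation'' you defer is exactly where the argument lives, and it is neither routine nor verifiable from what you've written.

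In short: the conceptual germ is right, but the proposal substitutes an unproven local Langlands compatibility for the key citation to \cite{LS}, and its even-orthogonal case analysis is both incorrect in detail and different in structure from the paper's (Lusztig-series-based) argument.
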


\subsection{}

In this paragraph, we refer to \cite{LS} \S2
(see p.~1090 in particular).
Let $V$ be the $n$-dimensional $F$-vector space on which $Q$ is defined.
Write
\begin{equation*}
V=V^{\rm an}\oplus V^{\rm iso}
\end{equation*}
where $V^{\rm an}$ is anisotropic (thus $\dim(V^{\rm an})\<1$)
and $V^{\rm iso}$ is a sum of $m$ hyperbolic planes. 

Let $q$ denote the cardinality of the residue field of $F$.
The anisotropic group $G^{\rm an}=\SO(V^{\rm an})$ has a unique
(up to conjugacy) maximal parahoric~sub\-group.
Its finite reductive quotient $\EuScript{G}^{\rm an}$
has neutral component the finite special orthogonal group 
\begin{equation*}
\SO_{a}(q)
\end{equation*}
with $a=\dim(V^{\rm an})$.

For any choice of integers $m_1,m_2\>0$ such that $m_1+m_2=m$,
there is a maximal paraho\-ric~sub\-group $J=J_{m_1,m_2}$
whose finite reductive quotient $\EuScript{G}=\EuScript{G}_{m_1,m_2}$
has neutral component
\begin{equation*}
\SO_{a+m_1, m_1}(q)\times\SO_{m_2, m_2}(q)
\end{equation*}
where $\SO_{u,v}(q)$ is the special orthogonal group over $\FF_{q}$
associated with a quadratic space of~di\-men\-sion $u+v$ and Witt index $v$. 
Choose $m_2=0$, so that $\EuScript{G}$ has neutral component 
$\SO_{m, m}(q)$ if $n=2m$,
and $\SO_{m+1, m}(q)$ if $n=2m+1$. 
In other words, $\EuScript{G}^\circ$ is split. 

\subsection{}

Let $\s$ be a self-dual cuspidal irreducible representation of $\GL_{2r}(q)$
and $s\in\FF_{q^{2r}}^\times$ be a parameter corresponding to $\s$.
In particular,
$s$ has degree $2r$ over $\FF_q$ and $s^{-1}=s^{q^r}$.
Its characteristic~po\-ly\-no\-mial $P(X)$ is thus irreducible,
of degree $2r$,
and self-dual (that is, reciprocal).

The parameter $s$ can be seen in the dual group
$\EuScript{G}^{\circ,*}\subseteq\GL_{2r}(q)$.
It then defines a Lusztig~se\-ries $\EuScript{E}(\EuScript{G}^{\circ},s)$.

\begin{lemm}
The Lusztig series $\EuScript{E}(\EuScript{G}^{\circ},s)$
contains a cuspidal representation.
\end{lemm}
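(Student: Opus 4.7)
The plan is to apply Lusztig's Jordan decomposition of characters, which reduces the problem to a computation about the centralizer of $s$. Since the characteristic polynomial $P(X)$ of $s$ is irreducible of degree $2r$, the element $s$ is regular semisimple in $\GL_{2r}(q)$, and its centralizer in $\EuScript{G}^{\circ,*}\subseteq\GL_{2r}(q)$ is a maximal torus $T^*$ of $\EuScript{G}^{\circ,*}$ defined over $\FF_q$. By Lusztig's theorem, $\EuScript{E}(\EuScript{G}^\circ,s)$ is in bijection with the set of unipotent representations of $(T^*)^F$, which reduces to the trivial character; hence $\EuScript{E}(\EuScript{G}^\circ,s)$ consists of a single irreducible representation $\rho$.

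By Deligne-Lusztig theory, $\rho=\pm R_T^{\EuScript{G}^\circ}(\theta)$, where $T$ is the maximal torus of $\EuScript{G}^\circ$ dual to $T^*$ and $\theta$ is the regular character of $T^F$ corresponding to $s$ under duality. The next step is to prove that $\rho$ is cuspidal. For $\theta$ in general position, the standard criterion asserts that $\pm R_T^{\EuScript{G}^\circ}(\theta)$ is cuspidal if and only if $T$ is contained in no proper $F$-stable Levi subgroup of $\EuScript{G}^\circ$; equivalently, the twisting element $w\in W$ describing the $\EuScript{G}^\circ(\FF_q)$-conjugacy class of $T$ meets no proper standard parabolic subgroup of the Weyl group $W$.

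In the odd case $\EuScript{G}^\circ=\SO_{2m+1}(\FF_q)$ (so $r=m$ and $W=W(C_m)$), the torus $T$ corresponds to a signed $m$-cycle with odd total sign, that is, a Coxeter-type element of $W(C_m)$, because the eigenvalues of $s$ form a single Galois orbit of size $2m$ closed under inversion. Such an element has full support on $\{1,\dots,m\}$ together with a nontrivial sign component that cannot be localized, and therefore cannot lie in any proper parabolic subgroup $S_{i_1}\times\dots\times S_{i_k}\times W(C_{m'})$ of $W(C_m)$. This establishes cuspidality of $\rho$ in this case.

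The hard part will be the even case $\EuScript{G}^\circ=\SO_{2m}^1(\FF_q)$ (with $W=W(D_m)$), since the literal analogue of a Coxeter element of $W(C_m)$ has odd sign and does not lie in $W(D_m)$. One must therefore identify carefully the Weyl class in $W(D_m)$ attached to the self-dual regular element $s$, and then verify directly that the corresponding element meets no proper parabolic subgroup of $W(D_m)$, these being of the form $S_{i_1}\times\dots\times S_{i_k}\times W(D_{m'})$. A case analysis of the possible cycle types in $W(D_m)$ compatible with an irreducible self-dual characteristic polynomial of degree $2m$, combined with the combinatorial description of parabolics in type $D$, should complete the argument.
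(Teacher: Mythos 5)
Your proposal takes a genuinely different route from the paper's. The paper's proof is essentially a citation: it defers the substance to Lust--Stevens \cite{LS} (\S7.2 for $m$ odd), and for $m$ even adds only the observation that $\pm 1$ are not eigenvalues of $s$, hence $C_{\EuScript{G}^{\circ,*}}(s)$ is connected and one may pass to the conformal group ${\rm GSO}$, which has connected centre and to which \cite{LS} \S7.2 again applies. You instead attempt a self-contained Deligne--Lusztig argument: the Jordan decomposition exhibits $\EuScript{E}(\EuScript{G}^\circ,s)$ as the singleton $\{\pm R_T^{\EuScript{G}^\circ}(\theta)\}$, and cuspidality is equivalent to ellipticity of the twist class of $T$.

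Your type $B_m$ argument is fine: $\Sp_{2m}$ is simply connected so the centralizer of $s$ is automatically connected, the twist class is a Coxeter element of $W(B_m)$, and Coxeter elements are elliptic. But your plan for type $D_m$, which you explicitly leave unfinished, will not close as stated. The Frobenius action on the eigenvalues $\{s^{q^i}\}_{0\le i<2m}$, together with the pairing $s^{q^i}\leftrightarrow s^{q^{i+m}}=s^{-q^i}$, completely determines the twist class of the torus containing $s$: it is the single negative $m$-cycle in $W(C_m)$. This element has exactly one sign change, hence odd sign, hence lies in $W(C_m)\setminus W(D_m)$. So there is no ``Weyl class in $W(D_m)$ attached to $s$'' to identify, and the proposed case analysis over cycle types in $W(D_m)$ has an empty set of candidates. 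Resolving this obstruction is the real content of the even orthogonal case, and it is exactly why \cite{LS} and the paper pass to a larger group (the conformal group, or the full orthogonal group) rather than argue inside $\SO_{2m}^1(\FF_q)$ as you propose. Separately, in type $D_m$ the connectedness of $C_{\EuScript{G}^{\circ,*}}(s)$, which you need for the Jordan decomposition to produce a singleton, is not automatic (unlike in $\Sp_{2m}$, which is simply connected); the paper makes this explicit by pointing out that $\pm 1$ are not eigenvalues of $s$, and your argument should do so too.
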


\begin{proof}
If $m$ is odd, see \cite{LS} \S7.2 (p.~1098). 
Assume now that $m$ is even.
We follow \cite{LS}~\S7.3. Con\-sider the group with connected centre 
$\widetilde{\EuScript{G}}={\rm GSO}_{m}^\pm$
of which $\EuScript{G}^\circ$ is a subgroup.
The scalars~$1$ and $-1$ are not eigenvalues of $s$.
The centralizer of $s$ is thus connected
and the two Lusztig~se\-ries associated with $s$ are the same.
A cuspidal representation of $\EuScript{G}^\circ$ associated with $s$
is an~irredu\-ci\-ble component of the restriction to $\EuScript{G}^\circ$ of a
cuspidal representation of
$\widetilde{\EuScript{G}}$ associated with a~semi-simple~element
$\tilde{s}\in\widetilde{\EuScript{G}}^*$ lifting $s$.
To prove the lemma,
it thus suffices to prove that the Lusztig~se\-ries 
$\Ee(\widetilde{\EuScript{G}},\tilde{s})$~con\-tains a cuspidal representation.

The two groups $\widetilde{\EuScript{G}}$ and $\EuScript{G}^\circ$ act
naturally on the same space, 
thus $\tilde{s}$ and $s$ have the same~cha\-rac\-teristic polynomial $P(X)$.
It follows from \cite{LS} \S7.2 (p.~1098) that
$\Ee(\widetilde{\EuScript{G}},\tilde{s})$ contains a
cuspidal representation. 
\end{proof}

\subsection{}

Let $\tau$ be a cuspidal representation in the
Lusztig series $\EuScript{E}(\EuScript{G}^{\circ},s)$.
Let $\l$ be an irreducible~repre\-sen\-tation of $J$ whose restriction to
$J^\circ$ (the preimage of $\EuScript{G}^{\circ}$ in $J$)
is a direct sum of conjugates (under $J$) of the inflation~of 
$\tau$.~Let $\pi$ be the representation obtained by compactly inducing
$\l$ to $\G$. 
It is a cuspidal~ir\-re\-du\-ci\-ble~representation of level $0$ of $G$.

As $G$ is split,
it follows from Moeglin \cite{Moeglin41} that
the Langlands parameter $\h$ associated with~$\pi$ is described 
by the reducibility set ${\rm Red}(\pi)$ and the Jordan set ${\rm Jord}(\pi)$
(see for instance the~in\-tro\-duc\-tion~of \cite{LS}  for a definition).

In our situation,
it follows from \cite{LS} \S8 that the sets
${\rm Red}(\pi)$ and ${\rm Jord}(\pi)$
are equal and both reduced to a single element $(\rho,1)$,
where $\rho$ is a selfdual cuspidal representation of $\GL_{N}(F)$
(with $N=n-1$ if $n$ is odd and $N=n$ if $n$ is even),
which proves Proposition \ref{propLS}.

\begin{rema}
More precisely, $\rho$ has level $0$,
and is obtained by compactly inducing a representation of
$F^\times\GL_{N}(\Oo_F)$ which is trivial on $1+\Mat_N(\p_F)$
and whose restriction to $\GL_{N}(\Oo_F)$ is the inflation of $\s$.
\end{rema}

\section{Cuspidal representations of $\Sp_{2n}(\QQ_2)$
with irreducible Galois parameter} 
\label{app2}

\begin{center}
{\bf (by Guy HENNIART at ORSAY)}
\end{center}

\def\pP{\overline{\P}}
\def\uU{\overline{\U}}
\def\bB{\overline{\B}}
\def\vol{{\rm vol}}
\def\card{{\rm card}}

\subsection{}
\label{A.1}

Let $p$ be a prime number and $\F$ a finite extension of $\QQ_p$.
Let $\overline{\F}$ be an algebraic closure of~$\F$ and $\W_\F$ the Weil group of 
$\overline{\F}/\F$. 
Let $n$ be a positive integer, and $\pi$ a cuspidal (complex)~repre\-sentation 
of $\Sp_{2n}(\F)$.
Let $\sigma$ be the Galois parameter attached to $\pi$ by Arthur \cite{Arthur},
which one sees as an orthogonal representation of 
$\W_\F\times \SL_2(\CC)$, of dimension $2n+1$.
The following result is used in the main text,
in Section \ref{argumentfinal}. 

\begin{theo}
\label{thm:guy}
Assume that $\F=\QQ_2$, 
and take for $\pi$ the (unique) simple supercuspidal~re\-pre\-sentation of 
$\Sp_{2n}(\F)$.
Then $\sigma$ is an irreducible representation of $\W_\F$. 
\end{theo}

Here \emph{simple} is in the sense of Gross and Reeder \cite{GR}.
The point of
the result is that $\pi$ is~com\-pactly induced from a compact open subgroup of
$\Sp_{2n}(\F)$, as we describe below.
Indeed when $p=2$ there is at least one
irreducible orthogonal representation $\sigma$ of $\W_\F$ of dimension $2n+1$ 
\cite{BHdyadic},
only one if $\F=\QQ_2$, and by \cite{Arthur} it is the parameter of a
cuspidal representation $\pi$ of $\Sp_{2n}(\F)$,
but it is not clear \textit{a priori} that $\pi$ is compactly induced. 

Our method is inspired by work of Oi \cite{Oi2018}.
When $p$ is odd, Oi
determines the parameter $\sigma$ of a simple cuspidal representation $\pi$ of
$\Sp_{2n}(  \F)$. In  his case  $\sigma$ is  always reducible,  but a  number of
techniques and  results remain valid  when $p=2$, and, with  extra information
given by Adrian and Kaplan \cite{AK} when $\F=\QQ_2$, that is enough for us.
It is quite likely that one can describe $\sigma$ explicitly whenever $\pi$ is
simple cuspidal, not only when $p$ is odd or $\F=\QQ_2$.
Indeed many of~our
arguments work more generally, and until \ref{A.6} we make no special
assumption on $\F$, except that in \ref{A.3} we start assuming
that\footnote{Oi and the author (\cite{HenniartOi})
can now extend Theorem \ref{thm:guy} to any $2$-adic field $\F$.} 
$p=2$. 

\subsection{}
\label{A.2}

We now proceed.
We use customary notation, $\Oo_\F$ for  the ring of integers of $\F$, $\p_\F$
for the maximal ideal of $\Oo_\F$.
We fix a uniformizer $\varpi$ of $\F$, and
write $k$ for the residue field $\Oo_\F/\p_\F$ and $q$ for its cardinality.
We also fix
a non-trivial character $\psi$ of $k$. If ${\bf H}$ is an algebraic group over
$\F$, we usually put $\H={\bf H}(\F)$. 

We use the usual explicit model of ${\bf G}=\Sp_{2n}$,
see \cite{Oi2018} \S2.4,
so elements of $\G=\Sp_{2n}(\F)$ are sym\-plectic $2n\times 2n$
matrices.
By cuspidal representation of $\G$ we mean an irreducible
smooth
complex cuspidal  representation. We are interested  in \emph{simple} cuspidal
representations of  $\G$, in the sense  of Gross and Reeder  \cite{GR}. Let us
describe them. 

The choice in \cite{GR} of a root basis and an affine root basis determines an
Iwahori subgroup~$\I$~of $\G$,~with its first two congruence subgroups
$\I^{+}$ and $\I^{++}$.
The Iwahori subgroup $\I$ is the subgroup~of $\Sp_{2n}(\Oo_\F)$
made out of the matrices which are upper triangular modulo $\p_\F$, $\I^{+}$
is made out of the matrices  which are further upper unipotent modulo $\p_\F$,
and $\I^{++}$ is made~out of the matrices $(x_{i,j})$ in $\I^{+}$ with
$x_{i,i+1}\in\p_\F$ for $i=1,\dots, 2n-1$, and $x_{2n,1}\in\p_{\F}^2$.
The quotient $\I^{+}/\I^{++}$ is isomorphic to a product of $n+1$ copies of 
$k$, \textit{via} the surjective homomorphism 
\begin{equation*}
(x_{i,j}) \mapsto (x_{1, 2} \text{ mod } \p_\F,
\dots, x_{n,n+1} \text{ mod } \p_\F, x_{2n,1}/\varpi \text{ mod } \p_\F)
\end{equation*}
from $I^{+}$ to $k^{n+1}$. 

A character of $\I^{+}$ is \emph{simple} if it is trivial on $\I^{++}$, and is
the inflation of a character of~$k^{n+1}$ which is non-trivial on each factor
$k$. The normalizer in $\G$ of a simple character $\theta$ of $\I^{+}$ is
$\Z\I^{+}$, where $\Z$ is the centre of $\G$, and $\Z\I^{+}$ is also the
intertwining of $\theta$ in $\G$, so that any~ex\-ten\-sion of $\theta$ to
$\Z\I^{+}$ gives  by compact  induction to $\G$  a cuspidal  representation of
$\G$: see \cite{Oi2018} \textsection 2.4,~Pro\-po\-sition 2.6.
Note that when $p$ is $2$,
the centre $\Z$ of $\G$ is actually contained in $\I^{++}$.
The cuspidal representations of $\G$ thus obtained are the
\emph{simple cuspidal} representations of \cite{GR}. 

The normalizer of $\I^{+}$ in $\G$ is $\Z\I$, and $\I$ acts on 
$\I^{+}/\I^{++}$ \textit{via} $\I/\I^{+}$;
identifying $\I/\I^{+}$ with $k^{\times n}$
\textit{via}
\begin{equation*}
(x_{i,j}) \mapsto (x_{1,1} \text{ mod } \p_\F,\dots, x_{n,n} \text{ mod } \p_\F),
\end{equation*}
the conjugation action of $(\chi _1 ,\dots,\chi _n)\in k^{\times n}$
on $\I^{+}/\I^{++}$ 
(identified with $k^{n+1}$) sends the family
$(u_1 ,\dots,u_{n+1})\in k^{n+1}$ to
\begin{equation*}
\left(u_1^{\phantom{1}}\chi_1^{\phantom{1}}\chi_2^{-1},
u_2^{\phantom{1}} \chi_2^{\phantom{1}} \chi_3^{-1} ,\dots,
u_{n-1}^{\phantom{1}}\chi_{n-1}^{\phantom{1}}\chi_n^{-1},
u_n^{\phantom{1}}\chi_n^2 ,
u_{n+1}^{\phantom{1}}\chi_1^{-2} \right).
\end{equation*} 
In particular, when $p=2$, a given
simple character $\theta$ of $\I^{+}$ can always be conjugated in $\I$ to the
character
\begin{equation*}
\t(a) : (u_1 ,\dots,u_{n+1}) \mapsto \psi(u_1  +\dots  +u_n+au_{n+1})
\end{equation*} 
for some $a$ in $k^\times$, uniquely determined by $\theta$.
More precisely if $\theta$ sends
$(u_1,\dots,u_{n+1})$  to $\psi(a_1  u_1 +\dots  +a_nu_n+a_{n+1}u_{n+1})$ for  some
$a_i$'s in $k^\times$, then $a$ is equal to $(a_1 \cdots a_{n-1})^2 \cdot a_n \cdot
a_{n+1}$. 
Thus\footnote{The referee remarks that one needs to know the ``intertwining implies conjugacy'' result that says that if two
simple characters $\theta$ and $\theta'$ of $I^+$ intertwine in $G$, then they are actually conjugate. The arguments are the same as for proving
that   the   construction   of   simple  cuspidals   does   give   irreducible
representations. The reader can~con\-sult \cite{Beth}
for the more general cases of epipelagic representations.} when $p=2$ there are only $q-1$ isomorphism classes of simple cuspidal
representations of $\G$, whereas, by a similar analysis
(\cite{Oi2018} \S2.4),
there are $4(q-1)$ such classes when $p$ is odd. Note that when
$q=2$ all that is obvious since $k$ has only one non-trivial character. 

\subsection{}
\label{A.3}

The group $\Sp_{2n}$ is split, and its dual group is $\SO_{2n+1}(\CC)$.
To a cuspidal representation~$\pi$~of~$\G$,
Arthur attaches the conjugacy class of
a discrete parameter, that is (the conjugacy class of)~a con\-tinuous 
homomorphism from $\W_\F\times\SL_{2}(\CC)$ into $\SO_{2n+1}(\CC)$ which,
as a represen\-tation of~di\-men\-sion $2n+1$, is a direct sum of inequivalent
irreducible orthogonal representa\-tions~$\sigma  _1 ,\dots,\sigma _r$ with the
product $\det\sigma _1 \cdots \det\sigma _r$ trivial. 
What our theorem says is that when $\F=\QQ_2$ and $\pi$ is simple cuspidal,
then $r=1$ and $\sigma _1$ is trivial on $\SL_{2}(\CC)$, \textit{i.e.} is in
fact~a~re\-presentation of $\W_\F$.
Note that \cite{BHdyadic} shows that when $p$ is
odd,  there is  no irreducible  orthogonal  representation of  $\W_\F$ of  odd
dimension $>1$, contrary to the case $p=2$,
where \cite{BHdyadic} gives a~com\-plete classification. 

\textit{From now on we assume $p=2$.}
For $a$ in $k^\times$ let us denote
by  $\pi(a)$ the  isomorphism class  of the  representation of  $\G$ compactly
induced from the character $\theta (a)$ of  $\I^{+}$. We let $\phi (a)$ be the
parameter of  $\pi(a)$, $r(a)$ the  number of irreducible components  of $\phi
(a)$, and $\Pi(a)$ the $L$-packet of  $\pi(a)$, that is the set of isomorphism
classes of  tempered (in fact,  discrete series) representations of  $\G$ with
parameter $\phi (a)$; it is known  that $\Pi(a)$ has $2^{r(a)-1}$ elements, so
one of our goals is to show that $r(a)=1$. 
Let ${\bf  G}_{\rm ad}$  be the adjoint  group of ${\bf  G}$, and  $\iota$ the
quotient map from ${\bf G}$ to ${\bf G}_{\rm ad}$. 

\begin{lemm}
\label{lem:A1}
$\pi(a)$ is stable under the action of $\G_{\rm ad}$.
\end{lemm}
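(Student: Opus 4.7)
The plan is to exploit the identification $\G_{\rm ad}/\iota(\G) \simeq \F^{\times}/\F^{\times 2}$ coming from the similitude character of $\mathrm{GSp}_{2n}(\F)$, and then verify stability representative by representative. For $\lambda \in \F^{\times}$ set
\begin{equation*}
g_{\lambda} = \mathrm{diag}(1,\dots,1,\lambda,\dots,\lambda) \in \mathrm{GSp}_{2n}(\F)
\end{equation*}
($n$ ones followed by $n$ copies of $\lambda$), which has similitude factor $\lambda$. Since inner automorphisms preserve isomorphism classes, it suffices to prove $\pi(a)^{g_{\lambda}} \simeq \pi(a)$ for $\lambda$ running over a set of representatives of $\F^{\times}/\F^{\times 2}$, which we take to lie in $\Oo_{\F}^{\times}$ or in $\w \cdot \Oo_{\F}^{\times}$.

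As $\pi(a)$ is compactly induced from an extension of $\theta(a)$ to $\Z\I^{+}$, the conjugate $\pi(a)^{g_{\lambda}}$ is compactly induced from the conjugated pair. By the intertwining statement for simple characters recalled from \cite{Oi2018} \S2.4, Proposition 2.6, it suffices to produce $h \in \G$ with $h^{-1}g_{\lambda}^{-1}\I^{+}g_{\lambda} h = \I^{+}$ and such that the resulting character $\theta(a)^{g_{\lambda}h}$ on $\I^{+}$ lies in the $\I$-orbit of $\theta(a)$; by the formulas of Paragraph \ref{A.2}, this orbit is determined by the invariant $(a_{1}\cdots a_{n-1})^{2}a_{n}a_{n+1} \in k^{\times}$.

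For $\lambda \in \Oo_{\F}^{\times}$ one may take $h = 1$: a direct block computation shows that conjugation by $g_{\lambda}$ preserves $\I^{+}$ and $\I^{++}$ and acts on $\I^{+}/\I^{++} \simeq k^{n+1}$ by $(u_{1},\dots,u_{n-1},u_{n},u_{n+1}) \mapsto (u_{1},\dots,u_{n-1},\bar\lambda u_{n},\bar\lambda^{-1}u_{n+1})$, where $\bar\lambda$ is the image of $\lambda$ in $k^{\times}$. Hence $\theta(a)^{g_{\lambda}}$ has parameters $(1,\dots,1,\bar\lambda,a\bar\lambda^{-1})$, whose invariant is $1^{2}\cdot\bar\lambda\cdot a\bar\lambda^{-1} = a$, matching that of $\theta(a)$.

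For $\lambda = \w$, the element $g_{\w}$ does not preserve $\I^{+}$ but implements the non-trivial automorphism of the extended Dynkin diagram of type $\widetilde C_{n}$, exchanging the two end affine simple roots and reversing the middle ones. Since $\Sp_{2n}$ is simply connected, the extended affine Weyl group equals the affine Weyl group for type $\widetilde C_{n}$, so one finds $h \in \G$ with $g_{\w}^{-1}\I g_{\w} = h^{-1}\I h$; then $\mathrm{Ad}(g_{\w}h^{-1})$ normalizes $\I^{+}$ and acts on $\I^{+}/\I^{++}$, up to the $\I/\I^{+}$-action, by the involution $(u_{1},\dots,u_{n-1},u_{n},u_{n+1}) \mapsto (u_{n-1},\dots,u_{1},u_{n+1},u_{n})$. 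After reindexing, $\theta(a)^{g_{\w}h}$ has parameters $(1,\dots,1,a,1)$ whose invariant is again $a$, so $\theta(a)^{g_{\w}h}$ lies in the $\I$-orbit of $\theta(a)$. The main obstacle is precisely this last case: correctly identifying conjugation by $g_{\w}$ with the $\widetilde C_{n}$ diagram involution, producing the auxiliary element $h$, and checking that the induced permutation of $(u_{1},\dots,u_{n+1})$ preserves the $k^{\times}$-valued orbit invariant—a combinatorial feature specific to $\widetilde C_{n}$, which is what ultimately makes the outer action trivial on the set of simple cuspidals.
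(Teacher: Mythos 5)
Your argument runs exactly parallel to the paper's: the same identification of $\G_{\rm ad}/\iota(\G)$ with $\F^\times/\F^{\times 2}$, the same split into unit and uniformizer representatives (your $g_\lambda$ and the paper's $t(b)$ with $b^2=\lambda$ differ by a central scalar, hence act identically by conjugation), and the same verification that the $\I$-orbit invariant $(a_1\cdots a_{n-1})^2 a_n a_{n+1}$ is preserved. The one spot where you stay abstract while the paper is explicit is the uniformizer case: the paper conjugates by the concrete element $st(b)$ with $s=\begin{pmatrix}0 & I_n\\-I_n&0\end{pmatrix}$ and reads off the new character directly, whereas you invoke the $\widetilde{C}_n$ affine diagram involution and assert the resulting permutation of $(u_1,\dots,u_{n+1})$ without computing it --- a correct shortcut, but one that ultimately needs the same matrix calculation to be fully checked.
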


\begin{proof}
We follow the proof of \cite{Oi2018} Proposition 5.2.
As there, one gets a description of the quo\-tient $\G_{\rm ad}/\iota (\G)$.
It is isomorphic to
$\Hom(\F^\times,\mu_2)$, itself isomorphic, by Kummer theory, to
$\F^\times/\F^{\times 2}$.
More concretely if $T$ is the diagonal torus of
$\G$ made out of elements
\begin{equation*}
t(b)=(b, b,\dots, b,  b^{-1},\dots, b^{-1})
\end{equation*}
(with $n$ times $b$ and $n$ times $b^{-1}$),
then for any $b$ in $\overline{\F}$
with   $b^2$  in   $\F^\times$  the   image   of  $t(b)$   in  ${\bf   G}_{\rm
  ad}(\overline{\F})$ is actually in $\G_{\rm ad}$, and the set of such
$t(b)$'s covers $\G_{\rm ad}/\iota (\G)$. 
 
If  $b^2$ is  a unit  in $\F$  then $t(b)$  actually normalizes  $\I$ and  its
congruence subgroups, and sends $\theta (a)$ to the character given by
\begin{equation*}
(u_1,\dots,u_{n+1})\mapsto \psi(u_1 +\dots +u_{n-1}+b^2 u_n+(a/b^2 )u_{n+1}),
\end{equation*}
conjugate in $\I$ to $\theta (a)$.
If $b^2$ is the uniformizer $\varpi$,
$t(b)$ conjugates $\I$ to another Iwahori subgroup, but~if $s$ is the matrix
in $\G$ with four blocks of size $n$, first line $(0, I_n)$ and second line
$(-I_n, 0)$,  then $st(b)$ normalizes  $\I$ and its congruence  subgroups, and
sends $\theta (a)$ to the character given by
\begin{equation*}
(u_1 ,\dots,u_{n+1})\mapsto \psi(u_1 +\dots +u_{n-1}+au_n+u_{n+1})
\end{equation*}
(recall that $p=2$, so $-1=1$ in $k$),
which is conjugate  to $\theta (a)$. Since the stabilizer  in $\G_{\rm ad}$ of
$\pi(a)$ is a subgroup  containing all of $\iota (\G)$, it  follows that it is
all of $\G_{\rm ad}$. 
\end{proof}

An important point is the genericity of simple cuspidal representations.
We fix the same~Whit\-ta\-ker datum as Oi \cite{Oi2018}
\S 6.3(2) to
define genericity.
By \cite{Kaletha2013} Proposition 5.1, the
$\G_{\rm ad}$-orbit~of~$\pi(a)$ contains a single generic representation,
so by the previous lemma the representation $\pi(a)$ is~ge\-neric.
Reasoning as in \cite{Oi2018} Corollary 4.9 and Corollary 5.7,
we get: 

\begin{prop}
The parameter $\phi (a)$ is trivial on $\SL_{2}(\CC)$, every element of $\Pi(a)$
is cuspi\-dal, and among them only $\pi(a)$ is a simple cuspidal representation. 
\end{prop}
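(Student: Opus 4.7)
The plan is to adapt \cite{Oi2018} Corollaries 4.9 and 5.7 to the case $p=2$. The key inputs already in hand are: $\pi(a)$ is generic (just established); $\pi(a)$ is an explicit simple cuspidal representation whose formal degree and depth can be computed directly from the compact induction data; and the discrete $L$-packet $\Pi(a)$ has cardinality $2^{r(a)-1}$. Accordingly, the first two assertions will follow from showing $\phi(a)|_{\SL_{2}(\CC)}=1$, and the third will require a separate genericity argument invoking Lemma \ref{lem:A1}.

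For the triviality on $\SL_{2}(\CC)$, I would decompose $\phi(a)=\bigoplus_{i=1}^{r(a)} \sigma_i\boxtimes S_{a_i}$ into pairwise inequivalent irreducible orthogonal summands of $\W_\F\times\SL_{2}(\CC)$, where $S_{a_i}$ denotes the irreducible representation of $\SL_{2}(\CC)$ of dimension $a_i$. The depth of $\pi(a)$ equals $1/(2n)$ (the minimal positive depth for $\Sp_{2n}$) and matches the depth of $\phi(a)$ under the local Langlands correspondence. Coupling this depth constraint with the formal degree conjecture, which expresses the formal degree of $\pi(a)$ in terms of the adjoint $\gamma$-factor of $\phi(a)$ and whose left-hand side can be computed explicitly from the compact induction of $\theta(a)$, rules out any factor $S_{a_i}$ with $a_i>1$: such a factor would force the ramification of $\sigma_i$ to be smaller than the total depth allows, producing a numerical conflict in the formal degree formula. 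The results of \cite{AK} handle the residue-characteristic-$2$ normalizations. Once $a_i=1$ for all $i$, Moeglin's parametrization identifies $\Pi(a)$ with a set of supercuspidal representations (a non-cuspidal discrete series of $\Sp_{2n}$ necessarily has a nontrivial Deligne $\SL_{2}$ part in its parameter), establishing the second assertion.

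For the final claim, suppose $\pi'\in\Pi(a)$ is a simple cuspidal representation. By Lemma \ref{lem:A1} applied to $\pi'$, it is $\G_{\rm ad}$-stable, and by the \cite{Kaletha2013} argument recalled above its $\G_{\rm ad}$-orbit contains a unique generic member, hence $\pi'$ is itself generic. Shahidi's unicity theorem for generic elements in a tempered $L$-packet (relative to the fixed Whittaker datum) then yields $\pi'\cong\pi(a)$. The main obstacle will be the formal-degree and depth bookkeeping at $p=2$, since some tame-lifting steps used in \cite{Oi2018} must be adjusted in characteristic $2$; the inputs from \cite{BHdyadic} on dyadic orthogonal Galois representations and \cite{AK} on $\F=\QQ_2$ should supply the needed replacements.
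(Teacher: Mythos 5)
Your overall architecture (genericity for the uniqueness claim, Moeglin's parametrization for cuspidality once triviality on $\SL_2(\CC)$ is known) is sound, but the core step — proving $\phi(a)|_{\SL_2(\CC)}=1$ — rests on tools the paper deliberately avoids, and this creates a real gap. You invoke the Hiraga--Ichino--Ikeda formal degree conjecture together with depth-preservation under the local Langlands correspondence; neither is available off the shelf for $\Sp_{2n}(\QQ_2)$. The paper never appeals to the formal degree conjecture: the only formal-degree input it uses anywhere in this appendix is Shahidi's much weaker statement (\cite{Shahidi90} Corollary 9.10) that members of a tempered $L$-packet share the same formal degree, and that is used later (Proposition in \S A.4) only to exclude level-zero cuspidals, not to control the $\SL_2$-factor. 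Even granting the conjecture, the ``numerical conflict'' you gesture at — that an $S_{a_i}$ with $a_i>1$ would force the wrong ramification — is not derived; you would need to compute the adjoint Artin conductor of $\phi(a)$ and compare it to the explicit formal degree $1/\vol(\I^+/\Z)$, and it is not evident this comparison rules out nontrivial $\SL_2$-blocks without further input. You also cite \cite{AK} here, but the paper uses Adrian--Kaplan much later, in \S A.7, precisely to nail down $r(a)=1$ via Rankin--Selberg $\gamma$-factors; it plays no role at this stage.

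The paper's route is shorter and unconditional: once Lemma \ref{lem:A1} gives $\G_{\rm ad}$-stability and \cite{Kaletha2013} Proposition~5.1 gives genericity of $\pi(a)$, the triviality of $\phi(a)$ on $\SL_2(\CC)$ and the cuspidality of every member of $\Pi(a)$ follow by the argument of \cite{Oi2018} Corollaries~4.9 and~5.7, which (valid for $p=2$ as well) exploit that a cuspidal \emph{generic} representation of a quasi-split classical group has Langlands parameter with trivial Arthur $\SL_2$. You already have genericity in hand — using it only for the uniqueness statement and switching to an unproven conjecture for the $\SL_2$-triviality is the misstep. Your final paragraph on uniqueness (genericity of any simple cuspidal $\pi'\in\Pi(a)$ via $\G_{\rm ad}$-stability, then Shahidi's unicity of the generic member) is essentially the paper's argument and is fine.
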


It only remains to prove that $r(a)=1$.

\subsection{}
\label{A.4}

Still following \cite{Oi2018} we prove:

\begin{prop}
$\Pi(a)$ does not contain any level $0$ cuspidal representation.
\end{prop}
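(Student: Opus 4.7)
My plan is to exploit the incompatibility between the Moy--Prasad depth of the simple cuspidal $\pi(a)$ and that of any putative level $0$ cuspidal in $\Pi(a)$.

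First, I would observe that $\pi(a)$ has strictly positive depth. Indeed, being compactly induced from an extension to $\Z\I^{+}$ of a character of $\I^{+}/\I^{++}$ non-trivial on each of the $n+1$ factors of $k^{n+1}$, it sits at depth $1/h$ in the Moy--Prasad filtration, where $h$ is the relevant Coxeter invariant of $\Sp_{2n}$. On the other hand, every level $0$ cuspidal of $\G$ has depth $0$ by definition.

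The step that does the real work would be the propagation of the depth throughout the $L$-packet $\Pi(a)$. For this, I would appeal to depth preservation in the tempered local Langlands correspondence for $\Sp_{2n}$: all members of the tempered packet $\Pi(a)$ share a common depth, namely that of the parameter $\phi(a)$, defined as the infimum of those $s \ge 0$ such that $\phi(a)$ is trivial on the upper-numbering subgroup $\W_\F^{s+}$. Granted this, since $\pi(a) \in \Pi(a)$ has depth $1/h > 0$, every member of $\Pi(a)$ has positive depth, and in particular $\Pi(a)$ contains no level $0$ cuspidal.

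The main obstacle will be the depth-preservation statement at residual characteristic $2$, where the literature is less uniform than in the tame case. If the general result is unavailable in exactly the form needed, my fallback would be the following more direct argument. By \ref{A.3} the parameter $\phi(a)$ is trivial on $\SL_2(\CC)$, so each irreducible summand $\sigma_i$ is an irreducible orthogonal representation of $\W_\F$. If $\Pi(a)$ were to contain a level $0$ cuspidal $\pi'$, then by the classification of depth-zero $L$-packets \textit{via} Deligne--Lusztig theory (DeBacker--Reeder, Kaletha) the parameter $\phi(a)$ would necessarily be tamely ramified, i.e.\ trivial on the wild inertia subgroup of $\W_{\QQ_2}$. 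Using the explicit description of small-dimensional orthogonal Weil representations of $\W_{\QQ_2}$ in \cite{BHdyadic}, I would then show that a tame orthogonal parameter of dimension $2n+1$ is incompatible with the formal degree (or, equivalently, the Plancherel density) of the simple cuspidal $\pi(a)$ -- computed explicitly as the reciprocal of the volume of $\Z\I^{+}$ up to the dimension of the inducing character -- yielding a contradiction. The comparison of formal degrees \textit{via} the Hiraga--Ichino--Ikeda formula is the step I expect to require the most care.
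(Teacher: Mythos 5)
Your primary route is blocked at exactly the point you anticipate: depth preservation along $L$-packets of classical groups is not a theorem in residue characteristic $2$. The existing depth-preservation results for classical groups hold under hypotheses on $p$ that exclude $p=2$, and Arthur's endoscopic classification, from which the packet $\Pi(a)$ is defined, gives no direct control on depth. Your fallback runs into a parallel obstruction: to deduce that a depth-zero member would force $\phi(a)$ to be tamely ramified you appeal to the depth-zero local Langlands parametrisation in the style of DeBacker--Reeder and Kaletha, but those constructions again require $p$ to avoid small primes and are not available for $\Sp_{2n}(\QQ_2)$; and the Hiraga--Ichino--Ikeda formal-degree formula you would then feed into is conjectural precisely in the generality you would need it.

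The paper avoids all of this by using only one unconditional stability input: by \cite{Shahidi90} Corollary 9.10, all members of a discrete $L$-packet share the same formal degree. The formal degree of $\pi(a)$ is ${\rm d}g/\vol(\I^{+}/\Z,{\rm d}g)$, while a level-$0$ cuspidal compactly induced from an irreducible $\rho$ of a maximal parahoric $\P$ (trivial on $\P^{+}$) has formal degree $\dim(\rho)\,{\rm d}g/\vol(\P/\Z,{\rm d}g)$. One then shows directly that $\dim(\rho)<\card(\P/\I^{+})$ by combining $\dim(\rho)^{2}\leqslant\card(\overline{\P})$ with $\card(\overline{\U})^{2}<\card(\overline{\P})$, the latter coming from the existence of the big Bruhat cell $\overline{\B}w\overline{\U}$ in the finite reductive quotient $\overline{\P}=\P/\P^{+}$. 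This is an entirely group-side, elementary estimate with no Galois input and no constraint on $p$. If you want to salvage your proposal, replace depth preservation and DeBacker--Reeder by Shahidi's constancy of formal degrees and drop the detour through tame parameters and HII in favour of this direct comparison.
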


\begin{proof}
By \cite{Shahidi90} Corollary 9.10,
all elements of $\Pi(a)$ have the same 
formal degree.
If ${\rm d}g$ is~a Haar measure on $\G/\Z$, then the formal degree of 
$\pi(a)$ is ${\rm d}g/\vol(\I^{+}/\Z,{\rm d}g)$
(by \cite{Oi2018} Lemma~5.10),
where\-as the
formal degree of a level $0$ cuspidal representation of $\G$ is strictly
smaller, by the~fol\-lowing reasoning inspired by \textit{loc.\! cit.}, Proposition
5.11.
A level $0$ cuspidal representation $\pi'$ of~$\G$ is compactly induced
from an irreducible representation $\rho$ of a maximal parahoric subgroup $\P$
of  $\G$, trivial  on  the pro-$p$  radical  $\P^+$ of  $\P$,  and coming  via
inflation from a cuspidal representa\-tion of the finite (connected here) 
reductive group $\pP=\P/\P^+$.
The formal degree of $\pi'$ is
\begin{equation*}
\frac{\dim(\rho)} {\vol(\P/\Z, {\rm d}g)} {\rm d}g.
\end{equation*}
One can assume that $\P$ contains $\I$ and $\I^{+}$
contains $\P^+$.
Since $p=2$,
the group $\P^+$ contains $\Z$, so what we have to prove
is that $\dim(\rho )<\card(\P/\I^{+})$.
But $\I^{+}/\P^+$ is the unipotent
radical $\overline{U}$ of the Borel subgroup $\overline{B}=\I/\P^+$ of $\pP$, and obviously
$\dim(\rho )^2$ is at most $\card(\pP)$, so it is enough to check
$\card(\pP)<\card(\pP/\overline{U})^2$ or $\card(\overline{U})^2<\card(\pP)$,
which is a
consequence of the existence of the big cell $\overline{B} w \overline{U}$
in the Bruhat decomposition for $\pP$. 
\end{proof}

\begin{rema}
It is highly plausible that for a cuspidal representation $\pi'$ of $\G$ which
is not of level $0$ and is not  a simple cuspidal either, the formal degree of
$\pi'$ is bigger  than the formal degree of $\pi(a)$.  But nothing explicit is
known about such $\pi'$. 
\end{rema}

\subsection{}\label{A.5}

Now we compute the character $\xi (a)$ of $\pi(a)$ at an affine generic 
element $g$ of $\I^{+}$, where $g$ gene\-ric means that modulo $\I^{++}$, $g$
gives an $(n+1)$-tuple $(u_1 ,\dots, u_{n+1})$ in $k^{n+1}$ with all 
coordina\-tes non-zero.
As in \cite{Oi2018} Lemma 2.5, we see that an element
$y$  conjugating $g$  into $\I^{+}$  belongs  to $\I$,  so that  by the  usual
formula for the character of compactly induced representations (see 
e.\!~g.
\textit{loc.\!  cit}. Theorem 3.2), the character $\xi (a)$ of $\pi(a)$ at $g$ is
the sum
\begin{equation*}
\sum\limits_{(\chi _1 ,\dots,\chi _n)\in k^{\times n}}
\psi\left(u_1^{\phantom{1}} \chi_1^{\phantom{1}}\chi_2^{-1} +u_2^{\phantom{1}}
\chi_2^{\phantom{1}} \chi _3^{-1} +\dots+u_{n-1}^{\phantom{1}}
\chi_{n-1}^{\phantom{1}}\chi _n^{-1} +u_n^{\phantom{1}}\chi _n^2 
+au_{n+1}^{\phantom{1}}\chi _1^{-2} \right), 
\end{equation*} 
which is a kind of Kloosterman sum,
the sum
\begin{equation*}
\sum\limits_{(\eta _1 ,\dots,\eta _n)\in k^{\times n}}
\psi\left(u_1  \eta  _1  +u_2   \eta  _2  +\dots+u_{n-1}\eta  _{n-1}+u_n\eta  _n^2
+au_n\eta _{n+1}\right)
\end{equation*} 
with $\eta _{n+1}$ given by $(\eta _1 \dots \eta _{n-1})^2
\eta _n\eta _{n+1}=1$. Noting that $\psi$  takes only the values $1$ and $-1$,
we conclude: 

\begin{prop}
The value of $\xi(a)$ at a generic  element $g$ of $\I^{+}$ is an odd integer
depending only on $g$ modulo $\I^{++}$. 
\end{prop}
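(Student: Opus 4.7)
The plan is to read off all three properties directly from the Kloosterman-type sum displayed just above the statement, making no further appeal to representation theory.

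First I would observe that the summand in the formula
\begin{equation*}
\xi(a)(g) = \sum_{(\eta_1,\dots,\eta_n)\in k^{\times n}}
\psi\bigl(u_1\eta_1+\dots+u_{n-1}\eta_{n-1}+u_n\eta_n^2+au_{n+1}\eta_{n+1}\bigr)
\end{equation*}
depends on $g$ only through the tuple $(u_1,\dots,u_{n+1})\in k^{n+1}$, since the auxiliary $\eta_{n+1}$ is determined by the relation $(\eta_1\cdots\eta_{n-1})^2\eta_n\eta_{n+1}=1$ which does not involve $g$. But $(u_1,\dots,u_{n+1})$ is by construction the image of $g$ under the isomorphism $\I^+/\I^{++}\simeq k^{n+1}$ recalled in Paragraph \ref{A.2}. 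This immediately gives the claimed factorization through $\I^+/\I^{++}$.

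Next I would use the hypothesis $p=2$ in a crucial way: the additive group $(k,+)$ is a $2$-group, so any character $\psi\colon k\to\CC^\times$ has image in $\{\pm1\}$, as already pointed out in the paragraph preceding the statement. Consequently every summand is $\pm1$, so $\xi(a)(g)$ is an integer, and moreover
\begin{equation*}
\xi(a)(g)\equiv\#(k^\times)^n=(q-1)^n\pmod{2}.
\end{equation*}
Since $q$ is a power of $2$, the integer $q-1$ is odd, hence so is $(q-1)^n$, and the proposition follows.

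I do not anticipate any real obstacle here: the preceding paragraph has already done the substantive work of producing the explicit sum, and the argument above is a routine parity count. The only subtlety worth flagging explicitly is that the genericity assumption on $g$ (all $u_i$ nonzero) is what permitted the change of variables $\eta_i=\chi_i\chi_{i+1}^{-1}$ (resp.\ $\eta_n=\chi_n^2$, $\eta_{n+1}=\chi_1^{-2}$) to be a well-defined bijection of $(k^\times)^n$ onto itself; once this bijection is in hand, the term count is exact, not just congruential.
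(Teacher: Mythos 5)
Your proof is correct and matches the paper's argument exactly: the displayed sum has $(q-1)^n$ terms, each equal to $\pm 1$ because $\psi$ is $\{\pm 1\}$-valued in characteristic $2$, so the value is an integer congruent to $(q-1)^n$ modulo $2$, hence odd, and it visibly depends only on $(u_1,\dots,u_{n+1})$. One small correction to your closing remark: genericity of $g$ is not what makes the change of variables a bijection of $(k^\times)^n$ onto itself (that holds unconditionally; in particular squaring is bijective on $k^\times$ in characteristic $2$); rather, genericity is used upstream, via \cite{Oi2018} Lemma 2.5, to show that any element conjugating $g$ into $\I^{+}$ already lies in $\I$, which is what reduces the character formula for the compactly induced representation to the displayed finite sum over $\I/\I^{+}\simeq k^{\times n}$.
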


\subsection{}\label{A.6}

Still following \cite{Oi2018} 5.3, we now show:

\begin{prop}\label{prop:A6}
$r(a)=1$ or $2$, and,
seen as a representation of $\W_\F$ of dimension $2n+1$,
$\phi (a)$ is either irreducible or the direct sum of a character $\omega$
with $\omega ^2 =1$ and an irreducible~(or\-tho\-gonal) representation with
determinant $\omega$. 
\end{prop}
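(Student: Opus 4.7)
The plan is to follow the strategy of Oi \cite{Oi2018} Proposition 5.15, adapted to the even residual characteristic case. Since $\phi(a)$ is trivial on $\SL_2(\CC)$ and decomposes as a direct sum of pairwise inequivalent irreducible orthogonal representations $\sigma_1, \ldots, \sigma_r$ of $\W_\F$, the centralizer component group is $S_{\phi(a)} \simeq (\ZZ/2\ZZ)^{r-1}$, and elements $\pi' \in \Pi(a)$ are parameterized by characters $\kappa$ of $S_{\phi(a)}$. Each non-trivial $\kappa$ corresponds to a partition $\{1, \ldots, r\} = A \sqcup B$ (with $A$ indexing the odd-dimensional direct sum) and gives rise to an elliptic endoscopic datum ${\bf H}_\kappa = \Sp_{2a} \times \SO^\epsilon_{2b}$ equipped with a discrete parameter $\phi_\kappa = \phi_A \oplus \phi_B$ mapping to $\phi(a)$.

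For each such $\kappa$, I would apply Arthur's endoscopic character identity, evaluated at an affine generic element $g$ of $\I^+ \setminus \I^{++}$:
\begin{equation*}
\sum\limits_{\pi' \in \Pi(a)} \kappa(\pi')\, \Theta_{\pi'}(g) \;=\; \sum\limits_{\gamma_H} \Delta(g,\gamma_H)\, S\Theta_{\phi_\kappa}(\gamma_H),
\end{equation*}
where $\gamma_H$ ranges over the norms of $g$ in ${\bf H}_\kappa$ and $S\Theta_{\phi_\kappa}$ denotes the stable character of the ${\bf H}_\kappa$-packet. The propositions in \ref{A.3}, \ref{A.4} and \ref{A.5} yield strong constraints: $\Theta_{\pi(a)}(g)$ is an odd integer, no $\pi' \in \Pi(a)$ is of level $0$, and $\pi(a)$ is the unique simple cuspidal in $\Pi(a)$. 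Whenever $r(a) \geq 3$, or $r(a) = 2$ but the decomposition is not of the form $\omega \oplus \sigma$ with $\omega^2 = 1$ a character and $\sigma$ irreducible of dimension $2n$, one can choose $\kappa$ so that both factors of ${\bf H}_\kappa$ are non-trivial and (by induction on $n$) carry parameters corresponding to simple cuspidal or level-$0$ representations of the smaller classical groups; endoscopic transfer then produces an element of $\Pi(a)$ which is either a second simple cuspidal or a level-$0$ cuspidal, contradicting the results of \ref{A.3} and \ref{A.4}.

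The main obstacle will be carrying out the endoscopic transfer in residual characteristic $2$. The norm correspondence between affine generic elements of $\I^+$ and regular semisimple classes of the endoscopic groups, along with the Langlands--Shelstad transfer factors, are more delicate for $p = 2$ than for odd $p$, where Oi can invoke explicit formulas. A possible workaround is to avoid a direct evaluation of transfer factors by exploiting the $\kappa$-orthogonality of packet characters and the integrality of $\Theta_{\pi(a)}(g)$ from \ref{A.5}: comparing $\sum_{\pi' \in \Pi(a)} |\Theta_{\pi'}(g)|^2$ to the value forced by the stable transfer, together with the parity obstruction coming from $\Theta_{\pi(a)}(g)$ being an odd integer, should suffice to force $r(a) \leq 2$. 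Once $r(a) \leq 2$ is established, the parity of $2n+1$ forces at least one summand of odd dimension; if this summand had dimension $>1$, the endoscopic argument applied with ${\bf H}_\kappa = \Sp_{2m} \times \SO_{2(n-m)}^\epsilon$ for $2m+1 = \dim\sigma_1$ would again produce a forbidden element of $\Pi(a)$. Hence the odd-dimensional summand has dimension $1$, i.e.\ is a quadratic character $\omega$, and the constraint $\det\sigma_1 \det\sigma_2 = 1$ yields $\det\sigma = \omega$ for the remaining $2n$-dimensional summand.
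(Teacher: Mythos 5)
Your proposal correctly identifies the endoscopic-character-identity framework and the relevance of Paragraphs \ref{A.3}--\ref{A.5}, but it misses the key elementary input that makes the argument close in the paper (following Oi): a generic element $g$ of $\I^{+}$ has \emph{irreducible} characteristic polynomial of degree $2n$ (Oi's Lemma 7.5, which the paper observes remains valid for $p=2$). This fact is what neutralizes the geometric side of the identity. Since the elliptic endoscopic data for $\Sp_{2n}$ have groups $\Sp_{2a}\times\SO_{2b}^{\epsilon}$, the norm correspondence requires a factorization of the characteristic polynomial of $g$ into self-dual factors of degrees $2a$ and $2b$; irreducibility forces $a=0$ or $b=0$, so $g$ has no norms in any proper elliptic endoscopic group with both factors non-trivial. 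The geometric side of the endoscopic identity therefore vanishes for any such $\kappa$ without any computation of Langlands--Shelstad transfer factors, and your concern about dyadic transfer factors is moot. Combined with the non-vanishing of each signed sum $\varepsilon(1)\xi_1+\dots+\varepsilon(s)\xi_s$ at $g$ (the spectral side), this immediately rules out partitions where both blocks have dimension $>1$, which is exactly the content of the Proposition.

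Your fallback route does not stand on its own. The claim that, for a non-trivial $\kappa$, the factors $\phi_A$ and $\phi_B$ "carry parameters corresponding to simple cuspidal or level-$0$ representations of the smaller classical groups" has no justification: there is no reason these restricted parameters should be of that form, and no induction on $n$ is set up for it. Moreover, endoscopic transfer does not "produce an element of $\Pi(a)$": the $L$-packet $\Pi(a)$ is determined by $\phi(a)$ alone, and the endoscopic relation compares character sums on $\G$ with stable characters on $\H_\kappa$, not the contents of the packets. Finally, the proposed "workaround" via $\sum_{\pi'}|\Theta_{\pi'}(g)|^2$ and the parity of $\Theta_{\pi(a)}(g)$ is too vague to close the gap; the actual parity argument used (in Oi's ``Claim'') is that the non-simple-cuspidal members of $\Pi(a)$ contribute even integers to $\xi_i(g)$, so each signed sum is odd, hence nonzero, and one then plugs this into the vanishing of the geometric side explained above.
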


\begin{proof}
Put $s=2^{r(a)-1}$ and enumerate the elements of $\pi(a)$ as $\pi_1
=\pi(a),\dots, \pi_s$, and  let $\xi _i$ be the character  of $\pi_i$. Let $g$
be a  generic element  of $\I^{+}$.  Choose $ \varepsilon  (i)=1$ or  $-1$ for
$i=1,\dots,s$. Exactly as in the proof of Claim in \textit{loc.\!  cit}., we
get that $\varepsilon (1)\xi _1 + \dots + \varepsilon (s)\xi _s$
does not vanish at $g$.
Using that the
characteristic polynomial of $g$ is irreducible of degree $2n$
(\textit{loc.\! cit}., Lemma 7.5, still valid when $p=2$),
the proofs of Theorem 5.1 and
Corollary 5.13 in \textit{loc.\! cit}. give the result. 
\end{proof}

\subsection{}\label{A.7}

To  get the  remaining  assertion that  $r(a)$  is  in fact  $1$,  we use  new
information given by Adrian and Kaplan \cite{AK}. Unfortunately that
information is only available presently when $\F=\QQ_2$, hence the
restriction  in  our main  result,  but  we  expect  that the  computation  in
\textit{loc.\! cit}. can be carried over to the general case. 
When $\F=\QQ_2$ there is only $a=1$, so we put $\pi=\pi(1)$.
In \cite{AK} Theorem 3.13,
the authors compute the Rankin-Selberg
$\gamma$-factor  $\gamma (\pi\times  \tau ,  \psi')$ (a  rational function  in
$2^s$ for  a complex parameter  $s$) for any  tame character $\tau$  of $\QQ_2
^\times$  with $\tau  ^2 =1$  and a  character $\psi'$  of $\QQ_2$  trivial on
$2\ZZ_2$ but not on $\ZZ_2$. They find 
\begin{equation}\label{eq:A1}
 \gamma  (\pi\times \tau  , \psi')=\tau  (2)2^{1/2-s}.
\end{equation}
On  the  other  hand  if  $\phi$  is   the  parameter  of  $\pi$,  seen  as  a
representation of $\W_\F$ of dimension  $2n+1$, and $\lambda$ the character of
$\W_\F$ corresponding to $\tau$ \textit{via} class field theory, then 
\begin{equation}\label{eq:A2}
\gamma  (\pi\times \tau  , \psi')=\gamma  (\phi  \otimes \lambda  , \psi')
\end{equation}
where the right-hand side is  the Deligne-Langlands factor\footnote{That is to
say, Arthur's correspondence is compatible with Rankin-Selberg
$\gamma$-factors. It can be proved by a lo\-cal-global argument. Detail
will appear in joint work with Oi \cite{HenniartOi}.}. 

That gives new information on $\phi$ which, we recall, is by Proposition
\ref{prop:A6} either irreducible~or the direct sum of a character $\omega$
with $\omega ^2 =1$ and an irreducible representation, say $\alpha$, with
\begin{equation*}
\omega \det\alpha=1.
\end{equation*}
But the factor $\gamma (\pi\times \tau , \psi')$ has no
zero nor pole, so is equal to the factor $\varepsilon (\pi\times \tau ,
\psi')=\varepsilon (\phi \otimes \lambda, \psi')$
which has the form
\begin{equation*}
u\cdot
2^{{\rm Art}(\phi \otimes \lambda  )-\dim(\phi \otimes \lambda ))(1/2-s)}
\end{equation*}
for
some non-zero complex  number $u$: the exact value of  the exponent comes from
the fact that $\psi'$ is trivial on  $2\ZZ_2$ but not on $\ZZ_2$. This implies
that ${\rm  Art}(\phi \otimes  \lambda )=2n+2$,  and taking  $\lambda$ trivial
yields ${\rm Art}(\phi )=2n+2$. 

Assume we are in the case where $\phi =\omega \oplus \alpha$.
Taking $\lambda=\omega$ gives a pole to 
$\gamma (\phi \otimes \omega , \psi)$
which  contradicts  \eqref{eq:A1}   if  $\omega$  is  tame   (that  is,  since
$\F=\QQ_2$,  unramified).  Thus $\omega$  is  wildly  ramified, so  its  Artin
exponent is at least $2$, and the  Artin exponent of $\alpha$ is at most $2n$.
That  implies   that  $\alpha$   is  tamely  ramified,   and  in   fact  ${\rm
  Art}(\alpha)=2n$,  ${\rm Art}(\omega  )=2$.  But then  $\det\alpha$ is  also
tamely ramified,  which contradicts $\omega \det\alpha=1$.  That contradiction
shows that $\phi$ is irreducible, as desired. 

\subsection{}\label{A.8}

One can describe $\phi$ explicitly.
By the main
result of \cite{BHdyadic} an orthogonal irreducible represen\-tation of 
$\W_{\QQ_2}$ 
is induced from an order $2$ wildly ramified character $\beta$ of $\W_K$, 
where $K$ is a~total\-ly~ramified extension of $\QQ_2$ degree $2n+1$.
Such an 
extension is unique up to isomorphism,~ge\-ne\-rated~by a uniformizer $z$ with 
$z^{2n+1}=2$.
Let $\widetilde\beta$ be the character of $K^\times$ corresponding
to $\beta$~\textit{via} class field theory. Since ${\rm Art}(\phi )=2n+2$, we have
${\rm Art}(\widetilde\beta)=2$, and moreover $\det(\phi )=1$ is~the restriction of
$\widetilde\beta$ to $\QQ_2 ^\times$ times the determinant of the representation
of $\W_{\QQ_2}$ induced from~the~tri\-vial~character of $\W_K$. That
determinant is an unramified quadratic character of $\W_K$, compu\-ted in
\cite{BushneFrohlichLNM} as the unramified character taking value at Frobenius
elements the Jacobi symbol of~$2$~mo\-dulo $2n+1$. That imposes
$\widetilde\beta(z)$, and with ${\rm Art}(\widetilde\beta)=2$ and
$\widetilde\beta(1+z)=-1$ it determines $\widetilde\beta$ hen\-ce $\beta$. 

\providecommand{\bysame}{\leavevmode ---\ }


\end{document}